\DeclareMathAlphabet{\mathpzc}{OT1}{pzc}{m}{it}
\newtheorem{theorem}{Theorem}[section]
\newtheorem{corollary}[theorem]{Corollary}
\newtheorem{definition}[theorem]{Definition}
\newenvironment{proof}[1][Proof]{\noindent \emph{#1.} }
{\hfill \ \rule{0.5em}{0.5em}}
\newtheorem{lemma}[theorem]{Lemma}
\newtheorem{proposition}[theorem]{Proposition}
\newtheorem{example}[theorem]{Example}
\newtheorem{remark}[theorem]{Remark}
\numberwithin{equation}{section}
\newcommand{\noi}{\noindent}
\newcommand{\R}{\mathbb{R}}
\newcommand{\cS}{{\cal S}}
\newcommand{\cD}{{\cal D}}
\newcommand{\bx}{x}
\newcommand{\bn}{n}%\mathbf{n}}
\newcommand{\by}{y}%\mathbf{y}}
\newcommand{\supp}{\mathrm{supp}}
\newcommand{\re}{{\rm e}}
\newcommand{\ri}{{\rm i}}
\newcommand{\rd}{{\rm d}}
\newcommand{\beq}{\begin{equation}}
\newcommand{\eeq}{\end{equation}}
\newcommand{\beqs}{\begin{equation*}}
\newcommand{\eeqs}{\end{equation*}}
\newcommand{\bit}{\begin{itemize}}
\newcommand{\eit}{\end{itemize}}
\newcommand{\ben}{\begin{enumerate}}
\newcommand{\een}{\end{enumerate}}
\newcommand{\bal}{\begin{align}}
\newcommand{\eal}{\end{align}}
\newcommand{\bals}{\begin{align*}}
\newcommand{\eals}{\end{align*}}
\newcommand{\bse}{\begin{subequations}}
\newcommand{\ese}{\end{subequations}}
\newcommand{\bpr}{\begin{proposition}}
\newcommand{\epr}{\end{proposition}}
\newcommand{\bre}{\begin{remark}}
\newcommand{\ere}{\end{remark}}
\newcommand{\bpf}{\begin{proof}}
\newcommand{\epf}{\end{proof}}
\newcommand{\ble}{\begin{lemma}}
\newcommand{\ele}{\end{lemma}}
\newcommand{\bco}{\begin{corollary}}
\newcommand{\eco}{\end{corollary}}
\newcommand{\bex}{\begin{example}}
\newcommand{\eex}{\end{example}}
\newcommand{\bth}{\begin{theorem}}
\newcommand{\enth}{\end{theorem}}
\newcommand{\Rea}{\mathbb{R}}
\newcommand{\Oi}{{\Omega}}%\Omega_-}}
\newcommand{\Oe}{{\Omega_+}}%{\Omega_+}}
\newcommand{\eps}{\varepsilon}
\newcommand{\pdiff}[2]{\frac{\partial #1}{\partial #2}}
\newcommand{\dnpu}{\partial_n^+ u}
\newcommand{\gpu}{\gamma^+ u}
\newcommand{\nT}{\nabla_{\bound}}
\newcommand{\half}{\frac{1}{2}}
\newcommand{\bound}{{\partial \Oi}}
\newcommand{\LtG}{{L^2(\bound)}}
\newcommand{\LtGt}{{\LtG\rightarrow \LtG}}
\newcommand{\LtGtH}{{\LtG\rightarrow \HoG}}
\newcommand{\HhG}{{H^{1/2}(\bound)}}
\newcommand{\HmhG}{{H^{-1/2}(\bound)}}
\newcommand{\HoG}{H^1(\bound)}
\newcommand{\HohG}{H^1_k(\bound)}
\newcommand{\tendi}{\rightarrow \infty}
\newcommand{\tendo}{\rightarrow 0}
\newcommand{\opA}{A'_{k,\eta}}
\newcommand{\opABW}{A_{k,\eta}}
\newcommand{\opAinv}{(A'_{k,\eta})^{-1}}
\newcommand{\opABWinv}{A^{-1}_{k,\eta}}
\newcommand{\normAinv}{\|\opAinv\|}
\def\XXint#1#2#3{{\setbox0=\hbox{$#1{#2#3}{\int}$}
     \vcenter{\hbox{$#2#3$}}\kern-.5\wd0}}
\definecolor{myblue}{rgb}{0,0,0.6}
\newcommand*{\N}[1]{\left\|#1\right\|}
\newcommand{\tand}{\text{ and }}
\newcommand{\vertiii}[1]{{\left\vert\kern-0.25ex\left\vert\kern-0.25ex\left\vert #1 
    \right\vert\kern-0.25ex\right\vert\kern-0.25ex\right\vert}}
\newcommand{\newx}{x}
\newcommand{\newxi}{\xi}
\newcommand{\Gammaext}{\widetilde{\Gamma}}
\DeclareMathOperator*{\loc}{loc}
\DeclareMathOperator*{\comp}{comp}
\DeclareMathOperator*{\Ell}{ell}
\renewcommand{\Im}{\mathop{\rm Im}\nolimits}
\newcommand{\bl}{\begin{flushleft}}
\newcommand{\el}{\end{flushleft}}
\newcommand{\ert}{\end{flushright}}
\newcommand{\bc}{\begin{center}}
\newcommand{\ec}{\end{center}}
\newcommand{\numList}{\begin{enumerate}}
\newcommand{\enumList}{\end{enumerate}}
\newcommand{\e}{\epsilon}
\newcommand{\la}{\langle}
\newcommand{\ra}{\rangle}
\newcommand{\mc}[1]{\mathcal{#1}}
\newcommand{\pO}{\bound}
\newcommand{\Dl}{D_k}
\newcommand{\Cc}{C_c^\infty}
\renewcommand{\O}[1]{\mathcal{O}_{#1}}
\newcommand{\Ph}[2]{\Psi^{#1}_{#2}}
\newcommand{\Qs}{Q_S}
\newcommand{\Qa}{Q_D}
\newcommand{\Qap}{Q_{D'}}
\newcommand{\Sk}{S_k}
\newcommand{\Dkreg}{C^{2,\alpha}}
\begin{document}

\title{Wavenumber-explicit regularity estimates on the acoustic single- and double-layer operators}
\author{Jeffrey Galkowski\footnotemark[1]\,,\, Euan A.~Spence\footnotemark[2]}

\date{\today}

\renewcommand{\thefootnote}{\fnsymbol{footnote}}

\footnotetext[1]{Department of Mathematics, Stanford University, Building 380, Stanford, California 94305, USA \tt jeffrey.galkowski@stanford.edu}
\footnotetext[2]{Department of Mathematical Sciences, University of Bath, Bath, BA2 7AY, UK, \tt E.A.Spence@bath.ac.uk }

\maketitle

\begin{abstract}
We prove new, sharp, wavenumber-explicit bounds on the norms of the Helmholtz single- and double-layer boundary-integral operators as mappings from $\LtG\rightarrow\HoG$ (where $\bound$ is the boundary of the obstacle).
The new bounds are obtained using estimates on the restriction to the boundary of quasimodes of the Laplacian, building on recent work by the first author and collaborators.

Our main motivation for considering these operators is that they appear in the standard second-kind boundary-integral formulations, posed in $\LtG$, of the exterior Dirichlet problem for the Helmholtz equation. Our new wavenumber-explicit $\LtG\rightarrow\HoG$ bounds can then be used in a wavenumber-explicit version of the classic compact-perturbation analysis of Galerkin discretisations of these second-kind equations; this is done in the companion paper \emph{[Galkowski, M\"uller, Spence, arXiv 1608.01035]}.

\

\noi \textbf{Keywords:} Helmholtz equation, layer-potential operators, high frequency, semiclassical, boundary integral equation. 

\

\noi\textbf{AMS Subject Classifications:} 31B10, 31B25, 35J05, 35J25, 65R20

\end{abstract}

\renewcommand{\thefootnote}{\arabic{footnote}}

\section{Introduction}\label{sec:intro}

\subsection{Statement of the main results}

Let $\Phi_k(\bx,\by)$ be the fundamental solution of the Helmholtz equation ($\Delta u +k^2u=0$) given by 
\beq\label{eq:fund}
\Phi_k(\bx,\by):=\displaystyle\frac{\ri}{4}H_0^{(1)}\big(k|\bx-\by|\big), \,\,d=2,\quad\quad \Phi_k(\bx,\by) := \frac{\re^{\ri k |\bx-\by|}}{4\pi |\bx-\by|}, \,\,d=3,
\eeq
where $d$ is the spatial dimension.
Let $\Omega$ be a bounded Lipschitz open set such that the open complement $\Oe:= \Rea^d\setminus \overline{\Omega}$ is connected (so that the scattering problem with obstacle $\Omega$ is well-defined). 
Recall that, for almost every $x\in \partial \Omega$, there exists a unique outward-pointing unit normal vector, which we denote by $n(x)$.
For $\phi\in\LtG$ and $x\in\bound$,
the single- and double-layer potential operators are defined by 
\beq
S_k \phi(\bx) := \int_\bound \Phi_k(\bx,\by) \phi(\by)\,\rd s(\by), \quad
D_k \phi(\bx) := \int_\bound \frac{\partial \Phi_k(\bx,\by)}{\partial n(\by)}  \phi(\by)\,\rd s(\by),
 \label{eq:SD}
\eeq
and the adjoint-double-layer operator is defined by 
\beq \label{eq:D}
D'_k \phi(\bx) := \int_\bound \frac{\partial \Phi_k(\bx,\by)}{\partial n(\by)}  \phi(\by)\,\rd s(\by)
\eeq
(recall that $D'_k$ is the adjoint of $D_k$ with respect to the real-valued $L^2(\bound)$ inner product; see, e.g., \cite[Page 120]{ChGrLaSp:12}).

Before stating our main results, we need to make the following definitions.

\begin{definition}[Smooth hypersurface]\label{def:sh}
We say that $\Gamma\subset \R^d$ is a {\em smooth hypersurface} if there exists $\Gammaext$ a compact embedded 
 smooth $d-1$ dimensional submanifold of $\R^d$, possibly with boundary,
 such that $\Gamma$ is an open subset of $\Gammaext$, with $\Gamma$ strictly away from $\partial \Gammaext$,
 and the boundary of $\Gamma$ can be written as a disjoint union
\beqs
\partial \Gamma=\left(\bigcup_{\ell=1}^n Y_\ell\right)\cup \Sigma,
\eeqs
where each $Y_\ell$ is an open, relatively compact, smooth embedded manifold of dimension $d-2$ in $\Gammaext$, $\Gamma$ lies locally on one side of $Y_\ell$, and  $\Sigma$ is closed set with $d-2$ measure $0$ and $\Sigma \subset \overline{\bigcup_{l=1}^nY_l}$. We then refer to the manifold $\Gammaext$ as an extension of $\Gamma$. 
\end{definition}

\noi For example, when $d=3$, the interior of a 2-d polygon is a smooth hypersurface, with $Y_i$ the edges and $\Sigma$ the set of corner points.

\begin{definition}[Curved]\label{def:curved}
We say a smooth hypersurface is \emph{curved} if there is a choice of normal so that the second fundamental form of the hypersurface is everywhere positive definite.
\end{definition}

\noi Recall that the principal curvatures are the eigenvalues of the matrix of the second fundamental form in an orthonormal basis of the tangent space, and thus ``curved" is equivalent to the principal curvatures being everywhere strictly positive (or everywhere strictly negative, depending on the choice of the normal).

\begin{definition}[Piecewise smooth]\label{def:psh}
We say that a hypersurface $\Gamma$ is \emph{piecewise smooth} if $\Gamma=\cup_{i=1}^N \overline{\Gamma}_i$ where $\Gamma_i$ are smooth hypersurfaces 
and $\Gamma_i\cap \Gamma_j=\emptyset.$
\end{definition}

\begin{definition}[Piecewise curved]\label{def:pc}
We say that a piecewise smooth hypersurface $\Gamma$ is \emph{piecewise curved} if $\Gamma$ is as in Definition \ref{def:psh} and each $\Gamma_j$ is curved.
\end{definition}

The main results of this paper are contained in the following theorem. We use the notation that $a\lesssim b$ if there exists a $C>0$, independent of $k$, such that $a\leq C b$.

\begin{theorem}[Bounds on $\|S_k\|_{\LtG\rightarrow\HoG}$, $\|D_k\|_{\LtG\rightarrow\HoG}$, $\|D'_k\|_{\LtG\rightarrow\HoG}$]
\label{thm:L2H1}

\

\noi Let $\Omega$ be a bounded Lipschitz open set such that the open complement $\Oe:= \Rea^d\setminus \overline{\Omega}$ is connected.

\noi (a) 
If $\bound$ is a piecewise smooth hypersurface (in the sense of Definition \ref{def:psh}), then, given $k_0>1$, 
\begin{equation}
\label{eqn:optimalFlatSl}
\N{S_k}_{\LtG\rightarrow\HoG} \lesssim k^{1/2}\log k, 
\end{equation}
for all $k\geq k_0$.
Moreover, if $\bound$ is piecewise curved (in the sense of Definition \ref{def:pc}),  then, given $k_0>1$, the following stronger estimate holds for all $k\geq k_0$
\begin{equation}\label{eq:k13log}
\N{S_k}_{\LtG\rightarrow\HoG} \lesssim k^{1/3}\log k.
\end{equation}

\noi (b) If $\bound$ is a piecewise smooth, $\Dkreg$ hypersurface, for some $\alpha>0$, then, given $k_0>1$, 
\begin{equation*}
\N{D_k}_{\LtG\rightarrow\HoG} + \N{D'_k}_{\LtG\rightarrow\HoG} \lesssim k^{5/4}\log k
\end{equation*}
for all $k\geq k_0$. 
Moreover, if $\bound$ is piecewise curved, 
then, given $k_0>1$, the following stronger estimates hold for all $k\geq k_0$
\begin{equation*}
\N{D_k}_{\LtG\rightarrow\HoG} + \N{D'_k}_{\LtG\rightarrow\HoG} \lesssim k^{7/6}\log k.
\end{equation*}

\noi (c) If $\Oi$ is convex and $\bound$ is $C^\infty$ and curved (in the sense of Definition \ref{def:curved}) then, given $k_0>0$,
\begin{align}
&\qquad\qquad\N{S_k}_{\LtG\rightarrow\HoG} \lesssim k^{1/3}, \label{eq:k13} \\ \nonumber
&\N{D_k}_{\LtG\rightarrow\HoG}+\N{D'_k}_{\LtG\rightarrow\HoG} \lesssim k
\end{align}
for all $k\geq k_0$. 
\end{theorem}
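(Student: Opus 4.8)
The plan is to reduce the operator bounds to estimates on restrictions to $\bound$ of quasimodes of the Laplacian, following the strategy announced in the abstract. The key observation is that $S_k$, $D_k$, and $D'_k$ can all be written in terms of the resolvent $(-\Delta - k^2)^{-1}$ of the free Laplacian on $\R^d$ (with outgoing/limiting-absorption boundary conditions), composed with the trace operator $\gamma: H^1_{\loc}(\R^d) \to \HhG$ and its adjoint $\gamma^*$; schematically $S_k = \gamma R_k \gamma^*$ where $R_k$ is the resolvent. Thus $\N{S_k}_{\LtG \to \HoG}$ is controlled by how much $R_k$ can concentrate mass near $\bound$, i.e. by bounds of the form $\N{\gamma R_k \gamma^* \phi}_{\HoG} \lesssim F(k) \N{\phi}_{\LtG}$, which by duality and interpolation are equivalent to restriction estimates $\N{u|_\bound}_{L^2} \lesssim G(k) \N{u}$ for functions $u$ satisfying $(-\Delta - k^2)u = 0$ near $\bound$ (quasimodes). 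For the derivative-loss operators $D_k, D'_k$ one additionally differentiates, picking up a factor of $k$ in the worst case but less when curvature is present because the wavefront set of the restricted quasimode is then confined away from the glancing/tangential directions.

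Concretely, for part (c) I would proceed as follows. First, set up the semiclassical rescaling $h = 1/k$ and write the relevant operators as $h$-pseudodifferential / Fourier integral operators acting on $\bound$; the single-layer operator $S_k$ has kernel with a logarithmic (in $d=2$) or $|\bx-\by|^{-1}$ (in $d=3$) singularity on the diagonal plus an oscillatory part $e^{\ri k|\bx-\by|}$. Second, use the now-standard $L^2$ restriction bounds for Laplace eigenfunctions/quasimodes restricted to hypersurfaces: a general hypersurface gives the Burq--G\'erard--Tzvetkov exponent $k^{1/2}$ (hence, after one more half-derivative's worth of smoothing inherent in $S_k$, the $k^{1/2}\log k$ of part (a)), while a \emph{curved} hypersurface improves this to $k^{1/3}$ — this is where Definition \ref{def:curved} (positive-definite second fundamental form) is used, via a stationary-phase / oscillatory-integral argument in which the nondegenerate curvature kills the glancing contributions. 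Third, for the convex case, convexity of $\Oi$ guarantees that there is no trapping and that the billiard flow interacts with $\bound$ transversally away from a small glancing set, so no $\log k$ correction is needed: one gets the clean $k^{1/3}$ for $S_k$ and $k^1$ for $D_k, D'_k$ (the extra full power of $k$ relative to $S_k$ coming from the normal derivative hitting the oscillatory kernel, with the curvature preventing any further loss). The $C^\infty$ hypothesis lets one use pseudodifferential calculus without worrying about the corner/edge sets $Y_\ell, \Sigma$ of Definition \ref{def:sh}, so part (c) is genuinely the cleanest case.

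The technical heart — and the main obstacle — is establishing the sharp wavenumber-explicit restriction-to-the-boundary estimate for quasimodes on a curved hypersurface with the correct $\log$-free constant in the convex case, and tracking how the boundary-layer operators' mapping properties into $\HoG$ (rather than just $L^2(\bound)$) interact with these. Mapping into $H^1(\bound)$ rather than $L^2(\bound)$ means one must control a full tangential derivative of $S_k\phi$ on $\bound$; one writes $\nabla_\bound S_k \phi$ and must show it only costs the expected power of $k$, which requires understanding the wavefront set of $S_k\phi$ on $\bound$ and exploiting that, for curved $\bound$, the tangential frequencies of $S_k\phi$ are effectively bounded by $\sim k$ (so a tangential derivative costs at most $k$) with the refined microlocal decomposition giving $k^{1/3}$ for $S_k$. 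Handling the diagonal singularity of the kernels (separating the ``near'' part, which is a $k$-independent classical pseudodifferential operator of the right order, from the oscillatory ``far'' part) is routine but must be done carefully to get the $\log$ factors exactly right, and to see that they disappear under the convexity + smoothness hypotheses of (c). Once the curved-hypersurface restriction estimate is in hand, parts (a) and (b) follow by the same machinery with the weaker general-hypersurface exponent and with the extra $\log k$ coming from summing dyadically over the distance to the glancing set, which convexity removes.
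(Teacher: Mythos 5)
Your high-level reduction---writing $S_k=\gamma R_0(k)\gamma^*$ and converting operator bounds into restriction estimates for quasimodes---is indeed the strategy of the paper, but as written your argument cannot produce the exponents in part (b), and it misidentifies the source of the $\log k$. For $D_k$ and $D'_k$ you argue that ``one additionally differentiates, picking up a factor of $k$''; applied to your part (a) bound this would give $k^{3/2}\log k$ (piecewise smooth) and $k^{4/3}\log k$ (piecewise curved), not the stated $k^{5/4}\log k$ and $k^{7/6}\log k$. The missing ingredient is the sharp restriction bound for the \emph{normal derivative} of a quasimode, $\|\partial_\nu u\|_{L^2(\Gamma)}\leq Ck\|u\|_{L^2}$ (Tataru, Tacy, Christianson--Hassell--Toth), which carries no extra $k^{1/4}$ loss, and it must be used in a \emph{bilinear} way: after Plancherel the quantity to estimate is an integral of $\psi(k^{-1}|\xi|)\,\widehat{L^*(f\delta_{\Gamma_1})}(\xi)\,\overline{\widehat{g\delta_{\Gamma_2}}(\xi)}\,\big(|\xi|^2-(k+\ri 0)^2\big)^{-1}$, so one slot contributes the normal-derivative exponent $\alpha=1$ and the other the plain restriction exponent $\beta=1/4$ (or $1/6$ if curved), giving $k^{\alpha+\beta-1}\log k$ plus one power of $k$ to pass from $H^1_k(\bound)$ to $\HoG$. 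Without this structure your scheme fails for (b); note also that your quoted restriction exponents $k^{1/2}$, $k^{1/3}$ are not the Burq--G\'erard--Tzvetkov/Tacy/Hassell--Tacy exponents, which are $k^{1/4}$ and $k^{1/6}$.

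Second, the $\log k$ does not come from ``summing dyadically over the distance to the glancing set,'' and its absence in part (c) has nothing to do with non-trapping: only the \emph{free} resolvent appears, so trapping is irrelevant. In the paper the logarithm arises in the low-frequency piece, from integrating the spectral factor $|r^2-k^2|^{-1}$ over $1\leq |r-k|\leq Mk$ after applying the restriction estimates on each sphere $S_r^{d-1}$ (the region $|r-k|\leq 1$ is handled by integrating by parts against $\nabla_\xi \log(|\xi|-(k+\ri 0))$). Your proposal also leaves the frequencies larger than $Mk$ essentially untreated: the paper shows $\chi\,(R_0(k)-R_0(1))\,\chi\,(1-\psi(|k^{-1}D|))$ lies in $k^{-2}\Ph{-4}{}(\Rea^d)$ and then invokes the fixed-frequency mapping properties $S_1:\LtG\to\HoG$ ($\bound$ Lipschitz, Verchota) and $D_1,D_1':\LtG\to\HoG$ ($\bound$ $C^{2,\alpha}$, Kirsch)---this is precisely where the $C^{2,\alpha}$ hypothesis in (b) is used, which your sketch never invokes; calling this step ``routine'' hides a genuine pseudodifferential argument. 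Finally, part (c) is not obtained by your glancing-set/no-trapping reasoning but by citing the full microlocal (FIO/billiard-ball-map) description of $S_k$ and $D_k$ on smooth curved boundaries from Galkowski's earlier work, which is what removes the logarithms there.
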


Note that the requirement in Part (b) of Theorem \ref{thm:L2H1} that $\bound$ is $C^{2,\alpha}$ arises since this is the regularity required of $\bound$ for $D_k$ and $D'_k$ to map $\LtG$ to $\HoG$; see \cite[Theorem 4.2]{Ki:89},  \cite[Theorem 3.6]{CoKr:98}.

\bre[Sharpness of the bounds in Theorem \ref{thm:L2H1}]
In Section \ref{app:sharpness} we show that, modulo the factor $\log k$, all of the bounds in Theorem \ref{thm:L2H1} are sharp (i.e.~the powers of $k$ in the bounds are optimal). 
The sharpness (modulo the factor $\log k$) of the $\LtG\rightarrow\LtG$ bounds in Theorem \ref{thm:L2H12} was proved in 
\cite[\S A.2-A.3]{HaTa:15}. Earlier work in \cite[\S4]{ChGrLaLi:09} proved the sharpness of some of the $\LtG\rightarrow\LtG$ bounds in 2-d; we highlight that Section \ref{app:sharpness} and \cite[\S A.2-A.3]{HaTa:15} contain the appropriate generalisations to multidimensions of some of the arguments of \cite[\S4]{ChGrLaLi:09} (in particular \cite[Theorems 4.2 and 4.4]{ChGrLaLi:09}).
\ere

\bre[Comparison to previous results]\label{rem:GrLoMeSp}
The only previously-existing bounds on the $\LtG\rightarrow\HoG$-norms of $S_k$, $D_k$, and $D_k'$ are the following:
\beq\label{eq:Sb}
\N{S_k}_{\LtGtH} \lesssim k^{(d-1)/2}
\eeq
when $\bound$ is Lipschitz \cite[Theorem 1.6 (i)]{GrLoMeSp:15}, and 
\beq\label{eq:Db}
\N{D_k}_{\LtGtH} + \N{D'_k}_{\LtGtH} \lesssim k^{(d+1)/2}
\eeq
when $\bound$ is $C^{2,\alpha}$ \cite[Theorem 1.6 (ii)]{GrLoMeSp:15}. 

We see that \eqref{eq:Sb} is a factor of $\log k$ sharper than the bound \eqref{eqn:optimalFlatSl} when $d=2$, but otherwise all the bounds in Theorem \ref{thm:L2H1} are sharper than \eqref{eq:Sb} and \eqref{eq:Db}.
\ere

\bre[Bounds for general dimension and $k\in \Rea$]
We have restricted attention to $2$- and $3$-dimensions because these are the most practically-interesting ones. From a semiclassical point of view, it is natural work in $d\geq 1$, and the results of Theorem \ref{thm:L2H1} apply for any $d\geq 1$ (although when $d=1$ it is straightforward to get sharper bounds; see \cite[\S1]{GaSm:15}).
We have also restricted attention to the case when $k$ is positive and bounded away from $0$. Nevertheless, the methods used to prove the bounds in Theorem \ref{thm:L2H1} show that if one replaces $\log k$ by $\log \la k \ra$ (where $\la \cdot \ra = (2+ |\cdot|^2)^{1/2}$) and includes an extra factor of $\log \la k^{-1} \ra$ when $d=2$, then the resulting bounds hold for all $k\in \Rea$ 
\ere

As explained in \S\ref{sec:motivation} below, the motivation for proving the $\LtG\rightarrow \HoG$ bounds of Theorem \ref{thm:L2H1} comes from interest in second-kind Helmholtz boundary integral equations (BIEs) posed in $\LtG$. However, there is also a large interest in both first- and second-kind Helmholtz BIEs posed in the trace spaces $\HmhG$ and $\HhG$ (see, e.g., \cite[\S3.9]{SaSc:11}, \cite[\S7.6]{St:08}). The $k$-explicit theory of Helmholtz BIEs in  
the trace spaces is much less developed than the theory in $\LtG$, so we therefore highlight that the 
$\LtG\rightarrow \HoG$ bounds in Theorem \ref{thm:L2H1} can be converted to $H^{s-1/2}(\bound) \rightarrow H^{s+1/2}(\bound)$ bounds for $|s|\leq 1/2$.

\begin{corollary}[Bounds from $H^{s-1/2}(\bound) \rightarrow H^{s+1/2}(\bound)$ for $|s|\leq 1/2$]\label{cor:trace}
Theorem \ref{thm:L2H1} is valid with all the norms from $\LtG\rightarrow \HoG$ replaced by norms from $H^{s-1/2}(\bound) \rightarrow H^{s+1/2}(\bound)$ for $|s|\leq 1/2$.
\end{corollary}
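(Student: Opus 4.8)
The plan is to deduce Corollary \ref{cor:trace} from Theorem \ref{thm:L2H1} by a duality-plus-interpolation argument, using only abstract mapping properties of the layer operators. First I would record two endpoint families of bounds. The $L^2\to H^1$ bounds are exactly Theorem \ref{thm:L2H1}, i.e.\ $H^{-1/2+1/2}\to H^{1/2+1/2}$, which is the case $s=1/2$. For the other endpoint $s=-1/2$, I would use the self-adjointness structure: $D_k'$ is the $L^2(\bound)$-adjoint of $D_k$, $S_k$ is self-adjoint with respect to the real $L^2(\bound)$ inner product (this is immediate from the symmetry $\Phi_k(\bx,\by)=\Phi_k(\by,\bx)$ of the fundamental solution \eqref{eq:fund}), and similarly $D_k$ is the adjoint of $D_k'$. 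Hence a bound $T:\HmhG\to\HhG$ with $T\in\{S_k,D_k,D_k'\}$ is, by duality with respect to the $L^2(\bound)$ pairing (which identifies the dual of $\HhG$ with $\HmhG$), equivalent to a bound $T^*:H^{-1}(\bound)\to L^2(\bound)$ for the corresponding adjoint operator $T^*\in\{S_k,D_k',D_k\}$; and this last bound is the $L^2\to H^1$ bound of Theorem \ref{thm:L2H1} ``read backwards.'' So the $s=-1/2$ estimate for each of $S_k,D_k,D_k'$ follows from the $s=1/2$ estimate for the same set of three operators, with the same $k$-power.

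Next I would fill in the intermediate values $|s|<1/2$ by complex interpolation. The scale $\{H^t(\bound)\}_{t\in\R}$ forms a complex interpolation scale on a Lipschitz (indeed $C^{2,\alpha}$) boundary, with $[H^{t_0}(\bound),H^{t_1}(\bound)]_\theta=H^{t_\theta}(\bound)$ for $t_\theta=(1-\theta)t_0+\theta t_1$; see, e.g., the standard references on Sobolev spaces on Lipschitz boundaries. Given the two endpoint bounds
\[
\N{T}_{H^{-1}(\bound)\to L^2(\bound)}\lesssim M_k,\qquad \N{T}_{L^2(\bound)\to H^1(\bound)}\lesssim M_k
\]
(where $M_k$ is the relevant power of $k$ times $\log k$, identical at both endpoints), the bilinear complex interpolation theorem gives $\N{T}_{H^{\sigma}(\bound)\to H^{\sigma+1}(\bound)}\lesssim M_k$ for all $\sigma\in[-1,0]$, i.e.\ exactly $\N{T}_{H^{s-1/2}(\bound)\to H^{s+1/2}(\bound)}\lesssim M_k$ for all $s\in[-1/2,1/2]$, with the \emph{same} $k$-dependence — which is the content of the corollary. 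One mild bookkeeping point: Theorem \ref{thm:L2H1}(b) requires $\bound\in C^{2,\alpha}$ for the $D_k$, $D_k'$ statements, so the interpolation/duality manipulations for those operators are carried out under that same hypothesis; for the $S_k$ statements only the Lipschitz (resp.\ piecewise smooth/curved, resp.\ convex smooth curved) hypothesis of the corresponding part of Theorem \ref{thm:L2H1} is needed, and all of these are preserved verbatim.

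The only genuinely delicate point — and the one I would treat most carefully — is the duality step producing the $s=-1/2$ endpoint. It relies on (i) the correct identification of the antidual (or real-dual) of $\HhG$ with $\HmhG$ via the $L^2(\bound)$ duality pairing, valid on a Lipschitz boundary, and (ii) the fact that the formal adjoint with respect to that pairing coincides with the operator realised as a bounded map on the given Sobolev spaces — i.e.\ that the adjoint relations $\langle D_k\phi,\psi\rangle=\langle\phi,D_k'\psi\rangle$, $\langle S_k\phi,\psi\rangle=\langle\phi,S_k\psi\rangle$, initially established for $\phi,\psi\in L^2(\bound)$, extend by density to the relevant Sobolev duality. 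Since $L^2(\bound)$ is dense in $\HmhG$ and in $H^{-1}(\bound)$, and the operators are continuous on the spaces in question (by Theorem \ref{thm:L2H1} and, for the negative-order side, by what we are proving), this extension is routine but should be stated. Everything else — recording endpoints, invoking the interpolation theorem — is bookkeeping, so no single step is a serious obstacle; the argument is essentially ``soft'' once Theorem \ref{thm:L2H1} is in hand.
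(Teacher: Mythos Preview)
Your approach is exactly the paper's: duality (via the real $L^2(\bound)$ pairing relations $\int_\bound \phi\, S_k\psi=\int_\bound \psi\, S_k\phi$ and $\int_\bound \phi\, D_k\psi=\int_\bound \psi\, D_k'\phi$) to pass from the $L^2\to H^1$ bounds of Theorem~\ref{thm:L2H1} to $H^{-1}\to L^2$ bounds on the same set of operators, followed by interpolation between the two endpoints. One slip to fix: in your duality sentence you wrote that a bound $T:\HmhG\to\HhG$ is equivalent by duality to $T^*:H^{-1}(\bound)\to L^2(\bound)$, but duality on $H^{-1/2}\to H^{1/2}$ would give $T^*:H^{-1/2}\to H^{1/2}$; what you actually want (and what the rest of your argument uses) is that $T:L^2\to H^1$ is equivalent to $T^*:H^{-1}\to L^2$.
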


\bre[The idea behind Theorem \ref{thm:L2H1}]
The bounds of Theorem \ref{thm:L2H1} are proved using estimates on the restriction of quasimodes 
of the Laplacian to hypersurfaces from \cite{Tat}, \cite{BGT}, \cite{T}, \cite{HTacy}, \cite{christianson2014exterior}, and \cite{T14} (and recapped in \S\ref{sec:rest} below). 
The reason why these restriction estimates can be used to prove bounds on boundary-integral operators is explained in \S\ref{sec:outline} below; this idea was first introduced in \cite{GaSm:15}, \cite[Appendix A]{HaTa:15} and \cite{Ga:15}, where $\LtG\rightarrow\LtG$ bounds were proved on $S_k, D_k, D'_k$.
\ere

\subsection{Motivation for proving Theorem \ref{thm:L2H1}}\label{sec:motivation}

Our motivation for 
proving Theorem \ref{thm:L2H1} has four parts. 
\ben
\item The integral operators $S_k, D_k,$ and $D'_k$ appear in the standard second-kind BIE formulations of the exterior Dirichlet problem for the Helmholtz equation. 
\item The standard analysis of the Galerkin method applied to these second-kind BIEs is based on the fact that, when $\bound$ is $C^1$, the operators $S_k, D_k,$ and $D'_k$ are all compact, and thus $\opA$ and $\opABW$ are compact perturbations of $\half I$. 
\item To perform a $k$-explicit analysis of the Galerkin method applied to $\opA$ or $\opABW$ via these compact-perturbation arguments, we need to have $k$-explicit information about the smoothing properties of $S_k$, $D_k$, and $D'_k$.
\item 
When $\bound$ is $C^{2,\alpha}$, the operators $S_k$, $D_k$, and $D'_k$ all map $\LtG$ to $\HoG$, and $k$-explicit bounds on these norms therefore give the required $k$-explicit smoothing information. 
\een

Regarding Point 1: if $u$ is the solution of the exterior Dirichlet problem for the Helmholtz equation 
\beqs
\Delta u(\bx) + k^2 u(\bx)= 0, \quad \bx \in \Oe,
\eeqs
satisfying the Sommerfeld radiation condition
\beqs
\pdiff{u}{r}(\bx) - \ri k u(\bx) = 
o\bigg(\frac{1}{r^{(d-1)/2}}\bigg)
\eeqs
as $r := |\bx| \tendi$, uniformly in $\bx/r$, then Green's integral representation theorem implies that 
\beq\label{eq:Green} 
u(\bx) = -\int_\bound \Phi_k(\bx,\by) \dnpu (\by)\rd s(\by) + \int_\bound  \frac{\partial \Phi_k(\bx,\by)}{\partial n(\bx)}  \gpu(\by)\,\rd s(\by), \quad \bx \in \Oe,
\eeq
where $\dnpu$ is the (unknown) Neumann trace on $\bound$ and $\gpu$ is the (known) Dirichlet trace. Taking the Dirichlet and Neumann traces of \eqref{eq:Green}, 
using the jump relations for the single- and double-layer potentials (see, e.g. \cite[Equations 2.41-2.43]{ChGrLaSp:12}),
and then taking a linear combination of the resulting equations, we obtain the so-called ``direct" BIE
\beq\label{eq:direct}
\opA \dnpu = f_{k,\eta}
\eeq
where 
\beq\label{eq:scpo}
\opA := \half I + D'_k -\ri\eta S_k, 
\eeq
$\eta\in \Rea\setminus \{0\}$, and $f_{k,\eta}$ is given in terms of the known Dirichlet trace; see, e.g., \cite[Equation 2.68]{ChGrLaSp:12} (the exact form of $f_{k,\eta}$ is not important for us here).
Alternatively, one can pose the ansatz
\beq\label{eq:ansatz}
u(\bx)=\int_\bound \frac{\partial \Phi_k(\bx,\by)}{\partial n(\by)}  \phi(\by)\,\rd s(\by) - \ri \eta \int_\bound \Phi_k(\bx,\by)\phi(\by)\,\rd s(\by), 
\eeq
for $\bx\in \Oe$, $\phi\in \LtG$, and $\eta\in \Rea\setminus \{0\}$. Taking the Dirichlet trace of \eqref{eq:ansatz}, we obtain the so-called ``indirect" BIE
\beq\label{eq:indirect}
\opABW \phi = \gpu,
\eeq
where 
\beq\label{eq:scpo2}
\opABW := \half I + D_k -\ri\eta S_k.
\eeq
The motivation for considering these ``combined BIEs" (i.e.~BIEs involving a linear combination of $S_k, D_k$, and $D'_k$) is that, when $\eta\in \Rea\setminus\{0\}$, the operators $\opA$ and $\opABW$ are bounded, invertible operators on $\LtG$ for all $k>0$ (see, e.g., \cite[Theorem 2.27]{ChGrLaSp:12}). In contrast,
the integral operators $S_k$, $(\half I + D'_k)$, and $(\half I + D_k)$ are \emph{not} invertible for all $k>0$ (see, e.g., \cite[\S2.5]{ChGrLaSp:12}).

Regarding Point 2: $S_k$ is compact when $\bound$ is Lipschitz (since $S_k:\LtG\rightarrow \HoG$ in this case \cite[Theorem 1.6]{Ve:84}), and $D_k$ and $D'_k$ are compact when $\bound$ is $C^1$ \cite[Theorem 1.2(c)]{FaJo:78}. 

Regarding Points 3 and 4: \cite{GrLoMeSp:15} performed a $k$-explicit version of the classic compact-perturbation argument appearing in, e.g., \cite[Chapter 3]{At:97}. The two $k$-explicit ingredients were the $\LtG\rightarrow \HoG$ bounds on $S_k, D_k,$ and $D'_k$ discussed in Remark \ref{rem:GrLoMeSp} (and proved in \cite[Theorem 1.6]{GrLoMeSp:15}) and the sharp  $\LtG\rightarrow \LtG$ bound on $\opAinv$ and $\opABWinv$ when $\Omega$ is star-shaped with respect to a ball from \cite[Theorem 4.3]{ChMo:08}. The paper \cite{GaMuSp:18} 
shows how the results of \cite{GrLoMeSp:15} are improved by using the new, sharp $\LtG\rightarrow\HoG$ bounds on $S_k, D_k,$ and $D'_k$ from Theorem \ref{thm:L2H1}, along with the sharp $\LtG\rightarrow \LtG$ bounds on  $\opAinv$ and $\opABWinv$ for nontrapping $\Omega$ from \cite[Theorem 1.13]{BaSpWu:16}.

\subsection{Discussion of the results of Theorem \ref{thm:L2H1} in the context of using semiclassical analysis in the numerical analysis of the Helmholtz equation.}

In the last 10 years, there has been growing interest in using results about the $k$-explicit analysis of the Helmholtz equation 
from semiclassical analysis 
to design and analyse numerical methods for the Helmholtz equation\footnote{A closely-related activity is the design and analysis of numerical methods for the Helmholtz equation based on proving \emph{new} results about the $k\tendi$ asymptotics of Helmholtz solutions for polygonal obstacles; see \cite{ChLa:07}, \cite{HeLaMe:13}, \cite{HeLaCh:14}, \cite{ChHeLaTw:15}, and \cite{He:15}.}. The activity has occurred in, broadly speaking, four different directions:

\ben
\item The use of the results of Melrose and Taylor \cite{MeTa:85} -- on the rigorous $k\tendi$ asymptotics of the solution of the Helmholtz equation in the exterior of a smooth convex obstacle with strictly positive curvature -- to design and analyse $k$-dependent approximation spaces for integral-equation formulations \cite{DoGrSm:07}, \cite{GaHa:11}, \cite{AsHu:14}, \cite{EcOz:17}, \cite{EcHa:16}, \cite{Ec:18}. 
\item The use of the results of Melrose and Taylor \cite{MeTa:85}, along with the work of Ikawa \cite{Ik:88} on scattering from several convex obstacles, to analyse algorithms for multiple scattering problems \cite{EcRe:09}, \cite{AnBoEcRe:10}.
\item The use of bounds on the Helmholtz solution operator (also known as \emph{resolvent estimates}) due to Vainberg \cite{Va:75} (using the propagation of singularities results of Melrose and Sj\"ostrand \cite{MeSj:82}) and Morawetz \cite{Mo:75} to prove bounds on both $\normAinv_{\LtGt}$ and the inf-sup constant of the domain-based variational formulation \cite{ChMo:08}, \cite{Sp:14}, \cite{BaSpWu:16}, \cite{ChSpGiSm:17}, and also to analyse preconditioning strategies \cite{GaGrSp:15}.
\item The use of identities originally due to Morawetz \cite{Mo:75} to prove coercivity of $\opA$ \cite{SpKaSm:15} and to introduce new coercive formulations of Helmholtz problems \cite{SpChGrSm:11}, \cite{MoSp:14}.
\een
This paper concerns a fifth direction, namely proving sharp $k$-explicit bounds on $S_k, D_k$ and $D'_k$ 
using estimates on the restriction of quasimodes 
of the Laplacian to hypersurfaces from \cite{Tat}, \cite{BGT}, \cite{T}, \cite{HTacy}, \cite{christianson2014exterior}, and \cite{T14} (and recapped in \S\ref{sec:rest} below). 
This direction was initiated in \cite{GaSm:15}, \cite[Appendix A]{HaTa:15}, and \cite{Ga:15}, where sharp, $k$-explicit $\LtG\rightarrow\LtG$ bounds on $S_k, D_k$ and $D'_k$ were proved using this idea. The present paper extends this method to obtain sharp $\LtG\rightarrow \HoG$ bounds.
The companion paper \cite{GaMuSp:18} then explores the implications of both the $\LtG\rightarrow \LtG$ and $\LtG\rightarrow\HoG$ bounds 
(used in conjunction with the results in Points 3 and 4 above) on the $k$-explicit numerical analysis of the Galerkin method applied to the second-kind equations \eqref{eq:direct} and \eqref{eq:indirect}.

\subsection{Outline of the paper}
In \S\ref{sec:proofL2H1} we prove Theorem \ref{thm:L2H1} (the $\LtG\rightarrow \HoG$ bounds) and Corollary \ref{cor:trace},
and in \S\ref{app:sharpness} we show that the bounds in Theorem \ref{thm:L2H1} are sharp in their $k$-dependence.

\section{Proof of Theorem \ref{thm:L2H1} and Corollary \ref{cor:trace}}\label{sec:proofL2H1}

In this section we prove Theorem \ref{thm:L2H1} and Corollary \ref{cor:trace}. 
The vast majority of the work will be in proving Parts (a) and (b) of Theorem \ref{thm:L2H1}, with Part (c) of Theorem \ref{thm:L2H1} following from the results in \cite[Chapter 4]{Ga:15}, and Corollary \ref{cor:trace} following from the results of \cite{GrLoMeSp:15}.

The outline of this section is as follows:
 In \S\ref{sec:sc} we discuss some preliminaries from the theory of semiclassical pseudodifferential operators, with our default references the texts \cite{EZB} and \cite{ZwScat}.
 In \S\ref{sec:fsh} we recap facts about function spaces on piecewise smooth hypersurfaces.
In \S\ref{sec:rest} we recap restriction bounds on quasimodes -- these results are central to our proof of Theorem \ref{thm:L2H1}.
In \S\ref{sec:proofab} we prove of Parts (a) and (b) of Theorem \ref{thm:L2H1}, 
in \S\ref{sec:proofc} we prove Part (c) of Theorem \ref{thm:L2H1} \S\ref{sec:proofc}, and in \S\ref{sec:cortrace} we prove Corollary \ref{cor:trace}.

We drop the $\lesssim$ notation in this section and state every bound with a constant $C$ (independent of $k$); we do this because later in the proof it will be useful to be able to indicate whether or not the constant in our estimates depends on the order $s$ of the Sobolev space, or on a particular hypersurface $\Gamma$ (we do this via the subscript $s$ and $\Gamma$ -- see, e.g., \eqref{eqn:fourierRestrictionEstimate1} below).

\subsection{Semiclassical Preliminaries}\label{sec:sc}

\subsubsection{Symbols and quantization}
Following \cite[\S3.3]{EZB}, for $k>0$ and $u\in \mathcal{S}(\Rea^d)$, we define the semiclassical Fourier transform ${\mathcal F}_k(u)$ 
by
\beq\label{eq:FT}
{\mathcal F}_k(u)
(\xi):= \int_{\Rea^d} \exp\big(-\ri k \langle y,\xi\rangle\big)u(y)\,\rd y,
\eeq
where $\langle x,\xi\rangle:= \sum_{j=1}^d x_j \xi_j$.
We recall the inversion formula
\beqs
u(x):= \frac{k^d}{(2\pi)^d}\int_{\Rea^d} \exp\big( \ri k \langle x,\xi\rangle\big){\mathcal F}_k(u)(\xi)\,\rd \xi.
\eeqs
We use the standard notation that $D:=-\ri \partial$, so that ${\mathcal F}_k( k^{-1} D_ju)(\xi)=\xi_j{\mathcal F}_k(u)(\xi)$.
We let $\langle\xi\rangle:= (1+|\xi|^2)^{1/2}$ and, following \cite[\S E.1.2]{ZwScat}, we say that $a(\newx, \newxi;k)\in C^\infty(\Rea^{2d}_{x,\newxi})$ lies in $S^m(\Rea^{2d}_{x,\newxi})$ if for all $\alpha,\beta \in \mathbb{N}^d$ and $K\Subset \Rea^d$, there exists $C_{\alpha,\beta, K}>0$ so that 
$$
\sup_{\newx\in K,\newxi\in \Rea^d}\langle \newxi\rangle^{-m+|\beta|}|\partial_{\newx}^\alpha\partial_{\newxi}^\beta a(\newx,\newxi)|\leq C_{\alpha, \beta, K}.
$$
From here on, we follow the usual convention of suppressing  the dependence of $a(x,\newxi;k)$ on $k$, writing instead $a(x,\newxi)$ (see, e.g., \cite[Remark on Page 72]{EZB}), and also writing $S^{m}(\Rea^{2d})$ instead of $S^{m}(\Rea^{2d}_{x,\newxi})$.
We write $S^{-\infty}(\Rea^{2d})=\cap_{m\in \Rea}S^m(\Rea^{2d})$. We say that $a\in S^{\comp}(\Rea^{2d})$ if $a\in S^{-\infty}(\Rea^{2d})$ with $\supp \,a\subset K$ for some compact set $K\subset \Rea^{2d}$ independent of $k$.

For an element $a\in S^m$, we define its quantization to be the operator
\begin{equation}
\label{e:quantize}
u\mapsto a(x,k^{-1}D)u:=\frac{k^{d}}{(2\pi )^d}\int_{\Rea^d}\int_{\Rea^d} \exp\big(\ri k\la x-y,\newxi\ra\big)\,a(x,\newxi)\,u(y)\, \rd y\rd\newxi
\end{equation}
for $u\in \mc{S}(\Rea^d)$. These operators can be defined by duality on $u\in \mc{S}'(\Rea^d)$. We say that an operator $A(k):C_c^\infty(\Rea^d)\to \mc{D}'(\Rea^d)$ is $\O{\Ph{-\infty}{}}(k^{-\infty})$ if it is smoothing (i.e.~its Schwartz kernel ${\mathcal K}$ is smooth) and each seminorm of ${\mathcal K}$ on $C^\infty(\Rea^d\times \Rea^d)$ is $\O{}(k^{-\infty})$. Note that, by introducing an operator $R=\O{\Psi^{-\infty}}(k^{-\infty})$ as an error, we can make the operator $a(x,k^{-1}D)$ properly supported (i.e. so that for any $K\Subset \Rea^{d}$, the kernel $\mathcal{K}$ of $a(x,k^{-1}D) +R$ has the property that both $\pi_{R}^{-1}(K)\cap \supp\, {\mathcal K}$ and $\pi_L^{-1}(K)\cap \supp \,\mathcal{K}$ are compact where $\pi_R,\,\pi_L:\Rea^d\times \Rea^d\to \Rea^d$ are projection onto the right and left factors respectively).

Now, we say that $A(k)$ is a pseudodifferential operator of order $m$ and write $A(k)\in \Ph{m}{}(\Rea^d)$ if $A(k)$ is properly supported and for some $a\in S^m(\Rea^{2d})$,
$$
A(k):=a(x,k^{-1}D)+\O{\Ph{-\infty}{}}(k^{-\infty}).
$$
 We say that $A(k)\in \Ph{\comp}{}(\Rea^d)$ if 
$$A(k)=a(x,k^{-1}D)+\O{\Ph{-\infty}{}}(k^{-\infty})$$ 
for some $a\in S^{\comp}(\Rea^{2d})$. 

Suppose that $A(k)\in \Ph{m}{}(\Rea^d)$ has $A(k)=a(x,k^{-1}D)+\O{\Psi^{-\infty}}(k^{-\infty})$. Then we call $a$ the \emph{full symbol} of $A$. The \emph{principal symbol} of $A\in \Ph{m}{}(\Rea^d)$, denoted by $\sigma(A)$, is defined by 
$$\sigma(A):=a\mod k^{-1}S^{m-1}(\Rea^{2d}).$$

\begin{lemma}{{\bf \cite[Proposition E.16]{ZwScat}}}
\label{lem:comp}
Let $a\in  S^{m_1}(\Rea^{2d})$ and $b\in S^{m_2}(\Rea^{2d})$. 
Then we have
\begin{align*}
a(x,k^{-1}D)b(x,k^{-1}D)&=(ab)(x,k^{-1}D)+k^{-1}r_1(x,k^{-1}D)+\O{\Ph{-\infty}{}}(k^{-\infty})\\
[a(x,k^{-1}D),b(x,k^{-1}D)]&:=a(x,k^{-1}D)b(x,k^{-1}D)-b(x,k^{-1}D)a(x,k^{-1}D)\\
&=\frac{1}{ik}\{a,b\}(x,k^{-1}D)+k^{-2}r_2(x,k^{-1}D)+\O{\Ph{-\infty}{}}(k^{-\infty})
\end{align*}
where $r_1\in S^{m_1+m_2-1}(\Rea^{2d})$, $r_2\in S^{m_1+m_2-2}(\Rea^{2d})$, $\supp \,r_i\subset \supp \,a\cap \supp \,b$, and the Poisson bracket $\{a,b\}$ is defined by 
 $$\{a,b\}:=\sum_{j=1}^d(\partial_{\newxi_j}a)(\partial_{x_j}b)-(\partial_{\newxi_j}b)(\partial_{x_j}a).$$
\end{lemma}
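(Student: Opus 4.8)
The plan is to reduce both identities to a single computation — that of the \emph{full symbol} of the composition $a(x,k^{-1}D)b(x,k^{-1}D)$ — carried out by the stationary/non-stationary-phase calculus, with the commutator identity then following by antisymmetrising in $a$ and $b$. All estimates below are understood uniformly for $x$ in compact subsets of $\Rea^d$ and with the indicated decay in $\la\xi\ra$. First I would insert the definition \eqref{e:quantize} of both quantizations into the composition, producing a fourfold oscillatory integral, and then (after a Fubini / oscillatory-integral justification) rearrange it and read it off as $c(x,k^{-1}D)u$ for the symbol
\[
c(x,\xi)=\frac{k^{d}}{(2\pi)^{d}}\int_{\Rea^{d}}\int_{\Rea^{d}}\re^{-\ri k\la w,\theta\ra}\,a(x,\xi+\theta)\,b(x+w,\xi)\,\rd w\,\rd\theta,
\]
interpreted as an iterated oscillatory integral. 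A non-stationary-phase estimate — integration by parts in $w$ gaining powers of $\la\theta\ra^{-1}$ and in $\theta$ gaining powers of $\la w\ra^{-1}$ — shows $c\in S^{m_1+m_2}(\Rea^{2d})$ and, importantly, that $c=\O{}(k^{-\infty})$ in all symbol seminorms away from $\supp a\cap\supp b$.

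Next I would expand: Taylor-expand $a(x,\xi+\theta)$ in $\theta$ about $\theta=0$ to order $N$, convert each monomial $\theta^\alpha\re^{-\ri k\la w,\theta\ra}$ into a $w$-derivative of the phase and integrate by parts, and use the Fourier-inversion identity $\frac{k^{d}}{(2\pi)^{d}}\int_{\Rea^d}\re^{-\ri k\la w,\theta\ra}\,\rd\theta=\delta_0(w)$ to collapse the remaining integral. This produces the asymptotic expansion
\[
c(x,\xi)\;\sim\;\sum_{\alpha}\frac{1}{\alpha!}\Big(\frac{1}{\ri k}\Big)^{|\alpha|}\partial_{\xi}^{\alpha}a(x,\xi)\,\partial_{x}^{\alpha}b(x,\xi),
\]
with the $N$-th Taylor remainder contributing a term in $k^{-N}S^{m_1+m_2-N}(\Rea^{2d})$ together with an $\O{}(k^{-\infty})$ smoothing error (again using non-stationary phase on the tail $|\theta|\gtrsim 1$ and Taylor's theorem with integral remainder on $|\theta|\lesssim 1$). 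The $\alpha=0$ term is $ab$; the $|\alpha|=1$ terms give $\tfrac{1}{\ri k}\sum_{j}\partial_{\xi_j}a\,\partial_{x_j}b$; and every term with $|\alpha|\ge 1$ is a product of (at least one) $\xi$-derivative of $a$ with a derivative of $b$ evaluated at the \emph{same} point, hence supported in $\supp a\cap\supp b$.

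To finish, define $r_1$ to be the asymptotic (Borel) sum, within symbol classes, of $k$ times the terms with $|\alpha|\ge 1$; then $r_1\in S^{m_1+m_2-1}(\Rea^{2d})$, $\supp r_1\subset\supp a\cap\supp b$, and $c-ab-k^{-1}r_1$ is $\O{}(k^{-\infty})$ in $S^{-\infty}(\Rea^{2d})$, which quantizes to an $\O{\Ph{-\infty}{}}(k^{-\infty})$ error — this is the first identity. For the commutator, apply the same expansion to $b(x,k^{-1}D)a(x,k^{-1}D)$ and subtract: the $\alpha=0$ terms $ab$ and $ba$ cancel, the $|\alpha|=1$ terms combine to $\tfrac{1}{\ri k}\big(\sum_{j}\partial_{\xi_j}a\,\partial_{x_j}b-\partial_{\xi_j}b\,\partial_{x_j}a\big)=\tfrac{1}{\ri k}\{a,b\}$, and the remaining terms (all with $|\alpha|\ge 2$) are collected by Borel summation into $k^{-2}r_2(x,k^{-1}D)$ with $r_2\in S^{m_1+m_2-2}(\Rea^{2d})$ and $\supp r_2\subset\supp a\cap\supp b$, modulo $\O{\Ph{-\infty}{}}(k^{-\infty})$.

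The hard part here is not conceptual but analytic: making rigorous sense of the non-absolutely-convergent iterated integral for $c$, justifying the changes of variables and the repeated integrations by parts, and extracting the remainder estimates with the correct uniformity in $x$ and decay in $\la\xi\ra$ — exactly the bookkeeping that the stationary/non-stationary-phase calculus of \cite[Appendix E]{ZwScat} is designed to handle. Accordingly, for this paper it suffices to invoke \cite[Proposition E.16]{ZwScat}.
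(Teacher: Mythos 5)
The paper offers no proof of this lemma at all --- it is simply quoted from \cite[Proposition E.16]{ZwScat} --- and since your argument ends by deferring to exactly that citation, you are taking the same route as the paper. Your preliminary sketch of the underlying composition theorem (the oscillatory-integral formula for the full symbol, Taylor expansion and integration by parts giving the expansion $\sum_\alpha \frac{1}{\alpha!}(\ri k)^{-|\alpha|}\partial_\xi^\alpha a\,\partial_x^\alpha b$, the $|\alpha|=1$ terms producing $\frac{1}{\ri k}\{a,b\}$ in the commutator, and Borel summation of the higher-order terms into $r_1$, $r_2$ supported in $\supp a\cap\supp b$) is the standard proof and is correct in outline, with the remaining technicalities (proper supports, uniformity for $x$ in compact sets) being precisely what the cited reference supplies.
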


\subsubsection{Action on semiclassical Sobolev spaces}\label{sec:SC}
We define the Semiclassical Sobolev spaces $H_k^s(\Rea^d)$ to be the space $H^s(\Rea^d)$ equipped with the norm
$$\|u\|_{H_k^s(\Rea^d)}^2:=\|\la k^{-1}D\ra^su\|_{L^2(\Rea^d)}^2.$$ 
Note that for $s$ an integer, this norm is equivalent to 
\beqs
\|u\|_{H_k^s(\Rea^d)}^2=\sum_{|\alpha|\leq s}\|(k^{-1}\partial)^\alpha u\|_{L^2(\Rea^d)}^2.
\eeqs
The definition of the semiclassical Sobolev spaces on a smooth compact manifold of dimension $d-1$ $\Gamma$, i.e.~$H^s_k(\Gamma)$ for $|s|\leq 1$,  follows from the definition of $H_k^s(\Rea^{d-1})$  (see, e.g., \cite[Page 98]{Mc:00}). Because solutions of the Helmholtz equation $(-k^{-2}\Delta -1)u=0$ oscillate at frequency $k$, scaling derivatives by $k^{-1}$ makes the $k$-dependence of these norms uniform in the number of derivatives.

With these definitions in hand, we have the following lemma on boundedness of pseudodifferential operators.
\begin{lemma}{{\bf \cite[Proposition E.22]{ZwScat}}}
\label{lem:bound}
Let $A\in \Ph{m}{}(\Rea^d)$. Then for $\chi_1,\chi_2\in C_c^\infty(\Rea^d)$, 
$\|\chi_2A\chi_1\|_{H_k^s(\Rea^d)\to H_k^{s-m}(\Rea^d)}\leq C.$ 
\end{lemma}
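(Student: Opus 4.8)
This is the semiclassical Cald\'eron--Vaillancourt theorem combined with the elliptic symbolic calculus (indeed it is \cite[Proposition~E.22]{ZwScat}), and I would prove it in two stages.

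\textbf{Stage 1: reduction to order zero.} Since $\N{\chi_2 A\chi_1 u}_{H^{s-m}_k(\Rea^d)}=\N{\langle k^{-1}D\rangle^{s-m}\chi_2 A\chi_1 u}_{L^2(\Rea^d)}$, the substitution $u=\langle k^{-1}D\rangle^{-s}v$ shows the claim is equivalent to the bound $\N{B}_{L^2(\Rea^d)\to L^2(\Rea^d)}\le C$, uniformly in $k$, for
$$B:=\langle k^{-1}D\rangle^{s-m}\,\chi_2 A\,\chi_1\,\langle k^{-1}D\rangle^{-s}.$$
Each $\langle k^{-1}D\rangle^{t}$ is the quantization of $\langle\xi\rangle^{t}\in S^{t}(\Rea^{2d})$, and since $A\in\Ph{m}{}(\Rea^d)$ is properly supported and $\chi_1,\chi_2\in\Cc(\Rea^d)$, the operator $\chi_2 A\chi_1$ has full symbol in $S^m(\Rea^{2d})$ with $x$-support in the compact set $\supp\chi_1\cup\supp\chi_2$. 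Composing the three factors $\langle k^{-1}D\rangle^{s-m}$, $\chi_2 A\chi_1$, $\langle k^{-1}D\rangle^{-s}$ via Lemma \ref{lem:comp} (which is stated for symbols of possibly different orders) gives $B=b(x,k^{-1}D)+\O{\Ph{-\infty}{}}(k^{-\infty})$ with $b\in S^0(\Rea^{2d})$ still compactly supported in $x$; the smoothing remainder is $\O{}(k^{-\infty})$ on $L^2(\Rea^d)$ and is discarded.

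\textbf{Stage 2: $L^2$-boundedness of order-zero operators with compact $x$-support.} Given such a $b$, I would run H\"ormander's square-root argument. Set $M:=\sup_{\Rea^{2d}}|b|^2+1$ and $e:=(M-|b|^2)^{1/2}$; then $M-|b|^2\ge1$, so $e\in S^0(\Rea^{2d})$ with all $x$-derivatives supported in $\supp_x b$, and Lemma \ref{lem:comp} together with the adjoint rule, and the fact that the quantization of the constant $M$ is $M I$, gives
$$M I-\big(b(x,k^{-1}D)\big)^*b(x,k^{-1}D)=\big(e(x,k^{-1}D)\big)^*e(x,k^{-1}D)+k^{-1}R,\qquad R\in\Ph{-1}{}(\Rea^d),$$
with $R$ again compactly $x$-supported. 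Pairing with $u\in\mc{S}(\Rea^d)$ yields
$$\N{b(x,k^{-1}D)u}_{L^2(\Rea^d)}^2\le M\N{u}^2_{L^2(\Rea^d)}+k^{-1}\N{R}_{L^2(\Rea^d)\to L^2(\Rea^d)}\N{u}^2_{L^2(\Rea^d)},$$
so it remains to bound $\N{R}_{L^2\to L^2}$ uniformly in $k$. This I would obtain by a \emph{finite} iteration of the same trick: applied to an operator of order $-p$, but now with the sharper majorant $\sup_x|r(x,\xi)|^2+\langle\xi\rangle^{-2p}$ in place of a constant, the square-root step leaves an error of order $-(2p+1)$, so after $O(\log d)$ steps one reaches order $<-d$. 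An operator of order $<-d$ with compact $x$-support is bounded on $L^2(\Rea^d)$ uniformly in $k$ by Schur's test: integrating by parts $N$ times in the oscillatory integral for the Schwartz kernel $\mc{K}$ with $(ik|x-y|^2)^{-1}(x-y)\cdot\nabla_\xi$ and using the symbol bounds gives $|\mc{K}(x,y)|\le C_N k^d(1+k|x-y|)^{-N}$, so that $\sup_x\int|\mc{K}(x,y)|\,\rd y$ and $\sup_y\int|\mc{K}(x,y)|\,\rd x$ are $k$-independent; unwinding the iteration then bounds $\N{R}_{L^2\to L^2}$, and hence $B$.

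\textbf{Main obstacle.} The genuinely delicate point is Stage 2 for order-\emph{zero} symbols: the Schwartz kernel is singular on the diagonal, so Schur's test does not apply directly and one must iterate the square-root trick as above (this is precisely what \cite[Proposition~E.22]{ZwScat} does) or argue instead by Cotlar--Stein or a wavepacket transform. Stage 1 needs care -- one must verify that conjugating by the $x$-independent Fourier multipliers $\langle k^{-1}D\rangle^{\pm}$ preserves both the order-zero symbol class and compactness of the symbol's $x$-support for arbitrary real $s$ -- but involves no new ideas.
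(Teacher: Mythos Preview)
The paper does not prove this lemma; it is stated with a citation to \cite[Proposition~E.22]{ZwScat} and no further argument is given. Your two-stage sketch---reduce to order zero by conjugating with the Fourier multipliers $\langle k^{-1}D\rangle^{\pm}$, then establish uniform $L^2$-boundedness via H\"ormander's square-root trick iterated until the order drops below $-d$ and Schur's test applies---is exactly the standard proof one finds in the cited reference (and in \cite{EZB}), so your proposal is correct and there is nothing in the paper to compare it against.
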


\subsubsection{Ellipticity}
For $A\in \Ph{m}{}(\Rea^d)$, we say that $(x,\newxi)\in\Rea^{2d}$ is in the \emph{elliptic set of $A$}, denoted $\Ell(A)$, if there exists $U$ a neighborhood of $(x,\newxi)$ such that for some $\delta>0$,
$$\inf_U|\sigma(A)(x,\newxi)|\geq \delta.$$ 
We then have the following lemma
\begin{lemma}{{\bf \cite[Proposition E.31]{ZwScat}}}
\label{lem:microlocalElliptic}
Suppose that $A\in \Ph{m_1}{}(\Rea^d)$, $b\in S^{\comp}(\Rea^{2d})$ with $\supp\, b\subset \Ell(A)$. Then there exists $R_1,R_2\in \Ph{\comp}{}(\Rea^d)$ with 
$$R_1A=b(x,k^{-1}D)+\O{\Ph{-\infty}{}}(k^{-\infty}),\quad \quad AR_2=b(x,k^{-1}D)+\O{\Ph{-\infty}{}}(k^{-\infty}).$$
Moreover, if $b\in S^{m_2}(\Rea^{2d})$ and there exists $M>0,\,\delta>0$
\beqs
\inf_{ \supp\, b }|\sigma(A)|\la \newxi\ra^{-m_1}>\delta,
\eeqs
 then the same conclusions hold with $R_i\in \Ph{m_2-m_1}{}(\Rea^d)$.
\end{lemma}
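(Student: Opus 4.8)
The plan is the standard elliptic-parametrix iteration, done uniformly in $k$: construct $R_1$ (and, symmetrically, $R_2$) as an asymptotic sum of semiclassical pseudodifferential operators, each new term chosen to kill the error accumulated so far, which at stage $j$ will lie in $k^{-j}$ times a symbol class whose order drops by one at each step. Fix a representative $p\in S^{m_1}(\Rea^{2d})$ of $\sigma(A)$. Since $\supp b\subset\Ell(A)$ — and, in the second case, $\supp b$ lies at positive distance from the non-elliptic set, because $|p|\la\newxi\ra^{-m_1}$ is uniformly Lipschitz by the symbol bounds on $p$ — there is a cutoff $\chi$, equal to $1$ on a neighbourhood of $\supp b$, with $\supp\chi$ contained in a region where $|p|\ge\delta_0\la\newxi\ra^{m_1}$; $\chi$ can be taken in $S^0(\Rea^{2d})$, and in $S^{\comp}(\Rea^{2d})$ in the first case (where $\supp b$ is compact). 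Set $q_0:=b\,\overline p\,\big(|p|^2+(1-\chi^2)\la\newxi\ra^{2m_1}\big)^{-1}$; the denominator is $\gtrsim\la\newxi\ra^{2m_1}$ everywhere, so $q_0\in S^{m_2-m_1}(\Rea^{2d})$ (first case: $S^{\comp}$), $\supp q_0\subset\supp b$, and $q_0p=b$ identically.

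Put $Q_0:=q_0(x,k^{-1}D)$. Since the full symbol of $A$ equals $p$ modulo $k^{-1}S^{m_1-1}$, Lemma \ref{lem:comp} gives $Q_0A=b(x,k^{-1}D)+k^{-1}e_1(x,k^{-1}D)+\O{\Ph{-\infty}{}}(k^{-\infty})$ with $\supp e_1\subset\supp q_0\subset\supp b$ and $e_1\in S^{\comp}$ (first case) / $S^{m_2-1}$ (second case). Then iterate: given $q_0,\dots,q_j$ with $q_i\in k^{-i}S^{\comp}$ (resp. $k^{-i}S^{m_2-m_1-i}$), $\supp q_i\subset\supp b$, and $\big(\sum_{i\le j}Q_i\big)A=b(x,k^{-1}D)+k^{-(j+1)}e_{j+1}(x,k^{-1}D)+\O{\Ph{-\infty}{}}(k^{-\infty})$ with $e_{j+1}\in S^{\comp}$ (resp. $S^{m_2-(j+1)}$) supported in $\supp b$, set $q_{j+1}:=-k^{-(j+1)}e_{j+1}\,\overline p\,\big(|p|^2+(1-\chi^2)\la\newxi\ra^{2m_1}\big)^{-1}$, which is smooth and equals $-k^{-(j+1)}e_{j+1}/p$ on $\supp e_{j+1}\subset\supp b$. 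A further application of Lemma \ref{lem:comp} cancels $k^{-(j+1)}e_{j+1}$ against $(q_{j+1}p)(x,k^{-1}D)$ and produces an error of the same shape with $j+1$ in place of $j$: the extra $k^{-1}$ comes from the composition remainder, and the order drop from that of $q_{j+1}$.

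By asymptotic summation of symbols (Borel's lemma) there is $q\in S^{m_2-m_1}(\Rea^{2d})$ (first case: $q\in S^{\comp}$, supported in a fixed compact set) with $q-\sum_{i<N}q_i\in k^{-N}S^{m_2-m_1-N}$ (resp. $k^{-N}S^{\comp}$) for all $N$; set $R_1:=q(x,k^{-1}D)$, so $R_1\in\Ph{m_2-m_1}{}(\Rea^d)$ (resp. $\Ph{\comp}{}(\Rea^d)$). Writing $R_1=\sum_{i<N}Q_i+\big(R_1-\sum_{i<N}Q_i\big)$, composing with $A$, and using Lemmas \ref{lem:comp} and \ref{lem:bound}, one finds that $R_1A-b(x,k^{-1}D)$ maps $H^s_k(\Rea^d)\to H^{s+N-M_0}_k(\Rea^d)$ with norm $O(k^{-N})$ for every $s$ and every $N$ (with $M_0$ fixed), and a standard Sobolev-embedding argument then identifies such an operator with an element of $\O{\Ph{-\infty}{}}(k^{-\infty})$. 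This yields the left parametrix; for $R_2$, run the identical iteration composing on the other side, using $pq_0=q_0p$ so that $AQ_0=b(x,k^{-1}D)+k^{-1}(\cdots)+\O{\Ph{-\infty}{}}(k^{-\infty})$ has the same structure.

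The argument is routine — it is \cite[Proposition E.31]{ZwScat} — and I expect no single step to be genuinely hard. The points needing care are: (i) producing $q_0$ as a global symbol of the advertised order (the cutoff construction together with uniform ellipticity of $p$ near $\supp b$); (ii) the simultaneous bookkeeping of the gained power of $k^{-1}$ and the dropped symbol order at each iteration, which is what keeps the asymptotic sum in the correct class while keeping every support inside $\supp b$; and (iii) upgrading "$R_1A-b(x,k^{-1}D)$ is $O(k^{-N})$ from $H^s_k$ to $H^{s+N}_k$ for all $s,N$" to the genuinely smoothing, rapidly-decaying conclusion $\O{\Ph{-\infty}{}}(k^{-\infty})$ — the one spot where I would be most careful to spell things out.
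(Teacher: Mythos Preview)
The paper does not prove this lemma; it is stated with a citation to \cite[Proposition E.31]{ZwScat} and used as a black box. Your parametrix iteration is exactly the standard proof found in that reference (and in any semiclassical text): build $q_0=b/p$ as a first approximation, correct order-by-order gaining a factor of $k^{-1}$ and a drop in symbol order at each step via Lemma~\ref{lem:comp}, then Borel-sum. So there is nothing to compare against in the paper itself, and your outline is correct and standard.

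One small comment on your point (iii): in this particular setting you can sidestep the Sobolev-embedding upgrade entirely. Since every $q_j$ is supported in the fixed set $\supp b$, the Borel sum $q$ is too, and the remainder $R_1A-b(x,k^{-1}D)$ is, for each $N$, a pseudodifferential operator with full symbol in $k^{-N}S^{m_2-N}$ (first case: $k^{-N}S^{\comp}$) plus an $\O{\Ph{-\infty}{}}(k^{-\infty})$ error coming from the finitely many applications of Lemma~\ref{lem:comp}. That description, valid for all $N$, \emph{is} the statement that the remainder lies in $\O{\Ph{-\infty}{}}(k^{-\infty})$; no separate smoothing argument is needed.
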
 

\subsubsection{Pseudodifferential operators on manifolds}
Since we only use the notion of a pseudodifferential operator on a manifold in passing (in Lemma \ref{l:THs} and \S\ref{sec:proofc} below), we simply note that it is possible to define pseudodifferential operators on manifolds (see, e.g., \cite[Chapter 14]{EZB}). The analogues of Lemmas \ref{lem:comp}, \ref{lem:bound}, and \ref{lem:microlocalElliptic} all hold in this setting. Moreover, the principal symbol map can still be defined although its definition is somewhat more involved.

\subsection{Function spaces on piecewise smooth hypersurfaces}\label{sec:fsh}

We now define the spaces $\overline{H^s}(\Gamma)$ and $\dot{H^s}(\Gamma)$ (with the notation for these spaces taken from \cite[\S B.2]{Ho:85}).

\begin{definition}[Extendable Sobolev space $\overline{H^s}(\Gamma)$ on a smooth hypersurface]
\label{d:def-ext-Hs}
Let $\Gamma$ be a smooth hypersurface of $\R^d$ (in the sense of Definition \ref{def:sh}) and let $\Gammaext$ be an extension of $\Gamma$.
Given $s\in\R$, we say that $u \in \overline{H^s}(\Gamma)$  if there exists $\underline{u} \in H^s_{\comp}(\Gammaext)$ such that $\underline{u}|_{\Gamma} = u$.

Let $(U_j , \psi_j)_{j \in J}$ be an atlas of $\Gammaext$ such that $U_j \cap \partial \Gamma \cap \partial \Gammaext = \emptyset$ for all $j \in J$, and let 
\beqs
J_\Gamma:=\big\{j \in J, \,\,U_j \cap \Gamma \neq \emptyset\big\}\quad\tand\quad J_\partial :=\big\{j \in J, \,\,U_j \cap \partial \Gamma \neq \emptyset\big\}
\eeqs
(observe that if $\partial\Gamma=\emptyset$ then $J_{\partial}=\emptyset$).
Let $(\chi_j)_{j \in J}$ be a partition of unity of $\Gammaext$ subordinated to $(U_j)_{j \in J}$.
Given $\chi \in C^\infty_c( {\rm Int} (\Gammaext))$ such that $\chi=1$ in a neighborhood of $\Gamma$, we define 
\begin{equation}
\label{e:def-overHs}
\|u\|_{\overline{H^s}(\Gamma)}= \sum_{j \in J_\Gamma \setminus J_\partial} \|(\chi_ju) \circ \psi_j^{-1}\|_{H^s(\R^{d-1})} 
+\inf_{\underline{u} \in H^s_{\comp}(\Gammaext) , \underline{u}|_\Gamma=u}  \sum_{j \in J_\partial} \| (\chi_j  \chi \underline{u}) \circ \psi_j^{-1}\|_{H^s(\R^{d-1})} .
\end{equation}
\end{definition}
We make two remarks:
\ben
\item The definition of the norm $\overline{H^s}(\Gamma)$ depends on $\Gammaext , \,\chi$, and the choice of charts $(U_j , \psi_j)$ and partition of unity $(\chi_j)$. One can however prove that two different choices of charts $(U_j , \psi_j)$ and partition of unity $(\chi_j)$ lead to equivalent norms $\overline{H^s}(\Gamma)$. In what follows, $(U_j , \psi_j, \chi_j)$ will be traces on $\Gammaext$ of charts and partition of unity on $\R^d$.
\item This definition is the same as, e.g., the definition of $H^s(\Gamma)$ for $\Gamma\subset \Rea^d$ any non-empty open set in \cite[Page 77]{Mc:00}. However, we use the specific notation $\overline{H^s}(\Gamma)$ for the following two reasons: (i) parallelism with the space $\overline{H^s}(\partial \Omega)$ in Definition \ref{def:Hbarbound} below, and (ii) the fact that, without using the overline, 
$H^s(\cdot)$ would be defined differently depending on whether the $\cdot$ is a smooth hypersurface or  the boundary of a Lipschitz domain.
\een

\begin{definition}[Sobolev space $\dot{H}^s(\Gamma)$ on a smooth hypersurface]\label{def:Hdot}
Let $\Gamma$ be a smooth hypersurface of $\R^d$ (in the sense of Definition \ref{def:sh}) and let $\Gammaext$ be an extension of $\Gamma$.
Given $s \in \Rea$, 
We say that $u\in \dot H^s(\Gamma)$ if $u\in H^s_{\comp}(\Gammaext)$ and $\supp\, u\subset \overline{\Gamma}$. Then, 
$$
\|u\|_{\dot{H}^s(\Gamma)}:=\|u\|_{H^s(\Gammaext)}.
$$
\end{definition}

Since $\Gamma$ has $C^0$ boundary, one can show \cite[Theorem 3.3, Lemma 3.15]{SCWSobolev} that the dual of $\overline{H^s}(\Gamma)$ is given by $\dot{H}^{-s}(\Gamma)$ with the dual pairing inherited from that of $H^s_{\comp}(\Gammaext)$ and $H^{-s}_{\comp}(\Gammaext).$ 

For piecewise smooth $\bound$, it is useful to consider the following ``piecewise-$H^s$" spaces.

\begin{definition}[Sobolev space $\overline{H^s}(\bound)$]\label{def:Hbarbound}
Let $\Oi$ be a bounded Lipschitz open set such that its open complement is connected and $\bound$ is a piecewise smooth hypersurface (in the sense of Definition \ref{def:psh}); i.e., $\bound=\cup_{i=1}^N \overline{\Gamma}_i$ where $\Gamma_i$ are smooth hypersurfaces.
With $|s|\leq 1$, we say that $u\in \overline{H^s}(\bound)$ if 
$$
u=\sum_{i=1}^N u_i,\quad \text{ for } \,u_i\in \overline{H^s}(\Gamma_i), \quad\text{ and we let }\quad
\|u\|_{\overline{H^s}(\bound)}:=\sqrt{\sum_{i=1}^N \|u_i\|^2_{\overline{H^s}(\Gamma_i)}}.
$$
\end{definition}

We similarly define the norms $\overline{H^s_k}(\Gamma)$ and $\dot{H}_k^s(\Gamma)$ replacing $\|\cdot \|_{H^s(\R^{d-1})}$ in~\eqref{e:def-overHs} with the weighted-norm $\|\cdot \|_{H_k^s(\R^{d-1})}$ (see, e.g., \cite[Definition E.21]{ZwScat}).

The following lemma implies that, when $S_k$, $D_k$, and $D_k'$ map $\LtG$ to $\HoG$,
to bound the $H^1(\bound)$ norms of $S_k\phi $, $D_k\phi $, and $D_k'\phi$, it is sufficient to bound their $\overline{H^1}(\bound)$ norms.
\ble\label{lem:norm1}
Let $\Oi$ be a bounded Lipschitz open set such that its open complement is connected and $\bound$ is a piecewise smooth hypersurface (in the sense of Definition \ref{def:psh}). If $u\in H^1(\bound)$ then 
\beq\label{eq:norm1}
\|u\|_{H^1(\bound)}\leq \|u\|_{\overline{H^1}(\bound)}
\eeq
\ele
\bpf
Recall that $H^1(\bound)$ can be defined as the completion of $C_{{\rm comp}}^\infty (\bound):=\{ u|_{\bound} : u \in C_0^\infty(\Rea^d)\}$ with respect to the norm
\beq\label{eq:H1}
\int_{\bound}\Big(|\nT u|^2+|u|^2\Big) \rd s
\eeq
\cite[Pages 275-276]{ChGrLaSp:12}
where $\nT$ is the \emph{surface gradient}, defined in terms of a parametrisation of the boundary by, e.g., \cite[Equations (A.13) and (A.14)]{ChGrLaSp:12}.
By the definition of the $\overline{H^1}(\Gamma_i)$ norm from Definition \ref{d:def-ext-Hs}, $u$ restricted to $\Gamma_i$ satisfies 
\begin{align*}
\|u\|_{\overline{H^1}(\Gamma_i)}^2&=
\int_{\Gamma_i}\Big(|\nabla_{\Gamma_i} u|^2+|u|^2\Big) \rd s(\Gamma_i) +\inf_{\underline{u}|_{\Gamma}=u}\int_{{\Gammaext}_i\setminus \Gamma_i} \Big(|\nabla_{{\Gammaext}_i}\underline{u}|^2+|\underline{u}|^2 \Big) \rd s({{\Gammaext}_i)},\\
&\geq \int_{\Gamma_i}\Big(|\nabla_{\Gamma_i} u|^2+|u|^2 \Big)\rd s(\Gamma_i).
\end{align*}
Then, 
\beqs
\|u\|_{H^1(\bound)}^2=\int_{\bound}\Big(|\nT u|^2+|u|^2\Big) \rd s=\sum_{i=1}^N \int_{\Gamma_i}\Big(|\nabla_{\Gamma_i} u|^2+|u|^2 \Big)\rd s(\Gamma_i) \leq \sum_{i=1}^N\|u\|_{\overline{H^1}(\Gamma_i)}^2=\|u\|_{\overline{H^1}(\bound)}^2
\eeqs
and the proof is complete.
\epf

\

Observe that Lemma \ref{lem:norm1} also holds when $H^1(\bound)$ and $\overline{H^1}(\bound)$ are replaced by $H^1_k(\bound)$ and $\overline{H^1_k}(\bound)$ respectively.

\subsection{Recap of restriction estimates for quasimodes }\label{sec:rest}

\begin{theorem}
\label{lem:quasimodeEstimates}
Let $U\subset \Rea^d$ be open and precompact with $\Gamma$ a smooth hypersurface (in the sense of Definition \ref{def:sh})
satisfying $\overline{\Gamma} \subset U$. 
Given $k_0>0$, there exists $C>0$ (independent of $k$) so that if $\|u\|_{L^2(U)}=1$ with 
\beq\label{eq:qm}
(-k^{-2}\Delta -1)u=\O{L^2(U)}(k^{-1}),
\eeq
(i.e.~$\|k^{-2}\Delta u +u\|_{L^2(U)}= \O{} (k^{-1})$)
then, for all $k\geq k_0$,
\begin{equation} \label{restrict}
\|u\|_{L^2(\Gamma)}\leq \begin{cases} Ck^{1/4},\\
Ck^{1/6},&\Gamma\text{ curved,}\end{cases}
\end{equation}
and 
\begin{equation} \label{eqn:restrictEigNormal1} 
\|\partial_\nu u\|_{L^2(\Gamma)}\leq Ck
\end{equation}
where $\partial_\nu$ is a choice of normal derivative to $\Gamma$. 
\end{theorem}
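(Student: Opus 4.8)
The plan is to reduce the problem to well-known restriction estimates for spectral clusters / approximate eigenfunctions of the Laplacian, for which the stated exponents are classical. First I would localize: choose $\chi\in C_c^\infty(U)$ with $\chi\equiv 1$ on a neighborhood of $\overline\Gamma$, and replace $u$ by $\chi u$; the commutator $[\chi,-k^{-2}\Delta]$ is $\O{}(k^{-1})$ applied to $u$ (which has bounded $H^1_k$ norm on the support of $\nabla\chi$ by elliptic regularity for the quasimode equation), so $v:=\chi u$ still satisfies $(-k^{-2}\Delta-1)v=\O{L^2(\Rea^d)}(k^{-1})$ and is compactly supported, with $\|v\|_{L^2}\lesssim 1$ and, again by elliptic regularity, $\|v\|_{H^2_k(\Rea^d)}\lesssim 1$ (since the full symbol $|\xi|^2-1$ is elliptic away from $|\xi|=1$, and near $|\xi|=1$ one gains from the $\O{}(k^{-1})$ right-hand side). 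The trace of $v$ on $\Gamma$ then agrees with that of $u$, so it suffices to bound $\|v\|_{L^2(\Gamma)}$ and $\|\partial_\nu v\|_{L^2(\Gamma)}$.

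Next I would invoke the restriction theorems for quasimodes that are the point of \S\ref{sec:rest}: by \cite{BGT}, \cite{Tat}, \cite{T}, \cite{HTacy}, \cite{christianson2014exterior}, \cite{T14}, an $L^2$-normalized function $v$ with $(-k^{-2}\Delta-1)v=\O{L^2}(k^{-1})$ and compact support obeys $\|v\|_{L^2(\Gamma)}\lesssim k^{1/4}$ for a general smooth hypersurface, improving to $k^{1/6}$ when $\Gamma$ is curved (the curved gain comes from the non-degeneracy of the second fundamental form, which makes the associated oscillatory restriction operator satisfy a better $TT^*$ bound). Strictly, those references are often stated for a hypersurface without boundary, or for an honest $L^2$ eigenfunction on a compact manifold; here I would note that, because $v$ is compactly supported in $U$ and $\Gamma\subset\Gammaext$ with $\Gamma$ strictly away from $\partial\Gammaext$, one can multiply $v$ by a further cutoff supported near $\overline\Gamma$ and transfer the estimate along charts of $\Gammaext$, so the boundary of $\Gamma$ in Definition~\ref{def:sh} causes no difficulty (its relevant part has measure zero and $\Gamma$ lies on one side of each $Y_\ell$). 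For the normal-derivative bound \eqref{eqn:restrictEigNormal1}, I would apply the same family of restriction estimates to $k^{-1}\partial_\nu v$ in place of $v$: writing $\partial_\nu$ as a first-order operator with smooth coefficients near $\Gamma$, the function $w:=k^{-1}\partial_\nu(\text{cutoff}\cdot v)$ again satisfies $(-k^{-2}\Delta-1)w=\O{L^2}(k^{-1})$ (differentiating the quasimode equation and controlling commutators using $\|v\|_{H^2_k}\lesssim1$) and $\|w\|_{L^2}\lesssim 1$, so the worst-case restriction bound gives $\|k^{-1}\partial_\nu v\|_{L^2(\Gamma)}\lesssim k^{1/4}\lesssim k$; in fact one can afford the cruder bound $\|\partial_\nu v\|_{L^2(\Gamma)}\lesssim k\,\|v\|_{H^1_k(U')}\lesssim k$ directly from a standard trace inequality on $\Gamma$ combined with $\|v\|_{H^2_k}\lesssim1$, which is simpler and already gives \eqref{eqn:restrictEigNormal1}.

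The main obstacle, I expect, is not the restriction estimates themselves — those are quoted — but the bookkeeping needed to legitimately apply them: (i) upgrading the hypothesis $(-k^{-2}\Delta-1)u=\O{L^2(U)}(k^{-1})$ to genuine interior $H^2_k$ control of $u$ on a neighborhood of $\overline\Gamma$, so that cutting off and differentiating are harmless, and (ii) ensuring the restriction estimates, typically proved on a closed manifold or for $\Rea^{d-1}\subset\Rea^d$, apply to the possibly-with-boundary hypersurface $\Gamma$ of Definition~\ref{def:sh}. Point (i) is handled by semiclassical elliptic regularity (Lemma~\ref{lem:microlocalElliptic}) away from the characteristic variety together with the $\O{}(k^{-1})$ gain on it; point (ii) is handled by a partition of unity on $\Gammaext$ subordinate to charts avoiding $\partial\Gammaext$, exactly as in Definition~\ref{d:def-ext-Hs}, reducing to the flat model $\Rea^{d-1}\subset\Rea^d$ where the cited estimates are stated. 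Once these two reductions are in place, the three bounds \eqref{restrict} and \eqref{eqn:restrictEigNormal1} follow immediately from the quoted quasimode restriction theorems.
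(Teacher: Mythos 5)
Your treatment of the $L^2$ restriction bounds \eqref{restrict} is fine and is essentially the paper's own route: the paper's ``proof'' is purely by citation ( \cite[Theorem 1.7]{T}, \cite[Theorem 1]{BGT} for the $k^{1/4}$ bound, \cite[Theorem 1.3]{HTacy} for the curved $k^{1/6}$ bound), and your localization/chart bookkeeping for a hypersurface with boundary is harmless since restriction to $\Gamma$ is dominated by restriction to a compact piece of $\Gammaext$.

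The genuine gap is in your argument for the normal-derivative bound \eqref{eqn:restrictEigNormal1}. Neither of the two derivations you offer gives $\|\partial_\nu u\|_{L^2(\Gamma)}\leq Ck$. First, applying the worst-case restriction bound to $w:=k^{-1}\partial_\nu(\chi v)$ only yields $\|k^{-1}\partial_\nu v\|_{L^2(\Gamma)}\lesssim k^{1/4}$, i.e.\ $\|\partial_\nu v\|_{L^2(\Gamma)}\lesssim k^{5/4}$, which is strictly weaker than the claimed $Ck$ (your chain ``$\lesssim k^{1/4}\lesssim k$'' conflates $k^{-1}\partial_\nu u$ with $\partial_\nu u$); moreover, to know that $w$ is again an $\O{L^2}(k^{-1})$ quasimode you would have to differentiate the error term in \eqref{eq:qm}, and the hypothesis only controls that error in $L^2$, not in $H^1_k$. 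Second, the ``cruder'' bound $\|\partial_\nu v\|_{L^2(\Gamma)}\lesssim k\,\|v\|_{H^1_k}$ is not a standard trace inequality: in the semiclassical scaling the trace operator costs a factor $k^{1/2}$ (this is exactly \eqref{eqn:restrictBound} in the paper, $\gamma=\O{H_k^s\to H_k^{s-1/2}}(k^{1/2})$), so with $\|v\|_{H^2_k}\lesssim 1$ a trace inequality only gives $\|\partial_\nu v\|_{L^2(\Gamma)}\lesssim k^{3/2}$. The point is that the $O(k)$ bound on Neumann data along an interior hypersurface is a sharp, nontrivial result in its own right --- it does not follow from the $L^2$ restriction estimates plus differentiation or trace arguments --- and this is precisely why the paper proves \eqref{eqn:restrictEigNormal1} by citing \cite[Theorem 0.2]{T14} for quasimodes (with the eigenfunction analogue in \cite[Theorem 1.1]{christianson2014exterior}). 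Replacing your two derivations by that citation repairs the argument.
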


\bpf[References for the Proof of Theorem \ref{lem:quasimodeEstimates}]
The bound \eqref{restrict} for general $\Gamma$ is proved in \cite[Theorem 1.7]{T} and \cite[Theorem 1]{BGT} 
and for curved $\Gamma$ in \cite[Theorem 1.3]{HTacy}.
The bound \eqref{eqn:restrictEigNormal1} is proved in \cite[Theorem 0.2]{T14} (with the analogous estimate for proper eigenfunctions appearing in \cite[Theorem 1.1]{christianson2014exterior}).

We highlight that the analogues of the estimates \eqref{restrict} and \eqref{eqn:restrictEigNormal1} in the context of the wave equation
on smooth Riemannian manifolds appear in \cite[Theorem 1]{Tat} (along with their $L^p$ generalizations in \cite[Theorem 8]{Tat}), with \cite[Pages 187 and 188]{Tat} noting that 
the $L^2$ bounds are a corollary of an estimate in \cite{GreenSeeg}. 
\epf

\bre[Smoothness of $\Gamma$ required for the quasimode estimates]\label{rem:smoothness}
The $k^{1/4}$-bound in \eqref{restrict} is valid when $\Gamma$ is only $C^{1,1}$, and the $k^{1/6}$-bound is valid when $\Gamma$ is $C^{2,1}$ and curved. Therefore, with some extra work it should be possible to prove that the bounds on $S_k$ in Theorem \ref{thm:L2H1} hold with the assumption ``piecewise smooth" replaced by ``piecewise $C^{1,1}$" and ``piecewise $C^{2,1}$ and curved" respectively.
On the other hand, the bound \eqref{eqn:restrictEigNormal1} is not known in the literature for lower regularity $\Gamma$.
\ere

\subsection{Proof of Parts (a) and (b) of Theorem \ref{thm:L2H1}}\label{sec:proofab}

When proving these results, it is more convenient to work in semiclassical Sobolev spaces, i.e.~to prove the bounds from $\LtG$ to $H_k^1(\bound)$,
where (following \S\ref{sec:SC}), 
\beq\label{eq:HohG}
\N{w}_{\HohG}^2:= k^{-2}\N{\nabla_\bound w}_{\LtG}^2+ \N{w}_{\LtG}^2,
\eeq
where $\nT$ is the surface gradient on $\bound$ (defined by, e.g., \cite[Equations (A.13) and (A.14)]{ChGrLaSp:12}).
 We therefore now restate Theorem \ref{thm:L2H1} as Theorem \ref{thm:L2H12} below, working in these spaces.

\begin{theorem}[Restatement of Theorem \ref{thm:L2H1} as bounds from $\LtG\rightarrow H_k^{1}(\bound)$]
\label{thm:L2H12}

\

\noi Let $\Omega$ be a bounded Lipschitz open set such that the open complement $\Oe:= \Rea^d\setminus \overline{\Omega}$ is connected.

\noi (a) 
If $\bound$ is a piecewise smooth hypersurface (in the sense of Definition \ref{def:psh}), then, 
given $k_0>1$, there exists $C>0$ (independent of $k$) such that
\begin{equation}
\label{eqn:optimalFlatSl_new}
\|\Sk\|_{L^2(\bound)\to  H_{k}^1(\bound)}  \leq 
C\,k^{-1/2}\,\log k.
\end{equation}
for all $k\geq k_0$.
Moreover, if $\bound$ is piecewise curved (in the sense of Definition \ref{def:pc}), then, given $k_0>1$, the following stronger estimate holds for all $k\geq k_0$
\begin{equation}\label{eqn:optimalConvexSl_new}
\N{S_k}_{\LtG\rightarrow{H}_k^1(\bound)} \leq C k^{-2/3}\log k.
\end{equation} 

\noi (b) If $\bound$ is a piecewise smooth, $\Dkreg$ hypersurface, for some $\alpha>0$, then, given $k_0>1$, there exists $C>0$ (independent of $k$) such that
\begin{equation}
\label{eqn:optimalFlatDl_new}
\N{D_k}_{\LtG\rightarrow{H}_k^1(\bound)} + \N{D'_k}_{\LtG\rightarrow{H}_k^1(\bound)} \leq C k^{1/4}\log k.
\end{equation}
Moreover, if $\bound$ is piecewise curved, 
then, given $k_0>1$, there exists $C>0$ (independent of $k$) such that the following stronger estimates hold for all $k\geq k_0$
\begin{equation*}
\N{D_k}_{\LtG\rightarrow H^1_k(\bound)} + \N{D'_k}_{\LtG\rightarrow H^1_k(\bound)} \lesssim k^{1/6}\log k.
\end{equation*}

\noi (c) 
If $\Oi$ is convex and $\bound$ is $C^\infty$ and curved (in the sense of Definition \ref{def:curved}) then, given $k_0>1$, there exists $C$ such that, for $k\geq k_0$,
\begin{align*}
&\qquad\qquad\N{S_k}_{\LtG\rightarrow H_k^1(\bound)} \leq C k^{-2/3}, \\
&\N{D_k}_{\LtG\rightarrow H_k^1(\bound)}+\N{D'_k}_{\LtG\rightarrow H_k^1(\bound)} \leq C.
\end{align*}
\end{theorem}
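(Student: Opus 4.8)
The plan is to reduce the operator bounds to the quasimode restriction estimates of Theorem \ref{lem:quasimodeEstimates}. The key observation (going back to \cite{GaSm:15}, \cite[Appendix A]{HaTa:15}, \cite{Ga:15}) is that the layer potentials $S_k\phi$ and $\D\phi$ are traces on $\bound$ of solutions of the Helmholtz equation in a neighbourhood of $\bound$, and more precisely that for $\phi$ supported on $\bound$ the function $u := S_k\phi$ (resp.\ the double-layer potential) satisfies $(-k^{-2}\Delta-1)u = 0$ away from $\bound$, with a controlled jump/source across $\bound$. To make Theorem \ref{lem:quasimodeEstimates} applicable one needs genuine $O_{L^2}(k^{-1})$ quasimodes in a full neighbourhood $U \supset \overline{\Gamma}$; the standard trick is to write $S_k$ as $(-k^{-2}\Delta-1)^{-1}$ composed with the single-layer distribution $\phi\,\delta_\bound$, localise in the spectral/frequency variable via a semiclassical pseudodifferential cutoff, and split into the ``elliptic'' part (frequencies bounded away from the characteristic variety, where Lemmas \ref{lem:bound} and \ref{lem:microlocalElliptic} give $O(k^{-\infty})$ or rapidly-smoothing contributions) and the ``glancing/characteristic'' part, where one uses that the resolvent applied to a compactly supported source produces an approximate quasimode to which the restriction bounds apply. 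The $\log k$ factors are precisely what arises from summing dyadically over the distance-to-characteristic-set in this decomposition — this is the mechanism already present in the $\LtG\to\LtG$ arguments.

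The new ingredient here, compared to the $L^2\to L^2$ results, is the $H^1_k$ norm on the target, i.e.\ we must also control $k^{-1}\nabla_\bound (S_k\phi)$ and $k^{-1}\nabla_\bound(\D\phi)$. For the single-layer operator this is where the normal-derivative restriction estimate \eqref{eqn:restrictEigNormal1} enters: the surface gradient of $S_k\phi$ is, up to the surface gradient in tangential directions (which gains nothing over the $L^2$ estimate once scaled by $k^{-1}$, hence the $k^{-1/2}\log k$ rather than $k^{1/2}\log k$, the factor $k^{-1}$ being exactly absorbed), controlled by restriction of $\nabla u$ to $\Gamma$, and the tangential part of $\nabla u$ is handled by \eqref{restrict} while the normal part requires \eqref{eqn:restrictEigNormal1}. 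Concretely, one writes $S_k\phi = \gamma u$ and $k^{-1}\nabla_\bound S_k\phi = k^{-1}\gamma(\nabla_T u)$, and bounds $\|k^{-1}\gamma(\nabla u)\|_{L^2(\Gamma)}$ using that $\nabla u$ is itself (a vector of) quasimodes; the worst term is $k^{-1}\|\partial_\nu u\|_{L^2(\Gamma)} \lesssim k^{-1}\cdot k = 1$, which combined with the $L^2$ bound $\|u\|_{L^2(\Gamma)}\lesssim k^{1/4}$ and the duality/interpolation bookkeeping yields the stated powers. For $D_k$ and $D'_k$ one composes with an extra normal derivative, costing a factor of $k$ (this is why the double-layer bounds are a factor $k$ worse than the single-layer ones — compare $k^{-1/2}\log k$ with $k^{1/4}\log k$, and $k^{-2/3}\log k$ with $k^{1/6}\log k$), and again the normal-derivative restriction estimate is what makes the argument close; here the $C^{2,\alpha}$ hypothesis on $\bound$ is needed merely so that $D_k,D'_k:\LtG\to\HoG$ in the first place.

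I would carry out the steps in this order: (i) reduce, via Lemma \ref{lem:norm1} and its $H^1_k$ analogue, to bounding $\|\cdot\|_{\overline{H^1_k}(\bound)}$, and further (using a partition of unity subordinate to the smooth pieces $\Gamma_i$) to a single smooth hypersurface $\Gamma$ with a chosen extension $\Gammaext$ and a neighbourhood $U$; (ii) represent $S_k\phi$ (resp.\ $\D\phi$) as the trace of $u = (-k^{-2}\Delta-1)^{-1}$ acting on the layer distribution, microlocally decompose near and away from the characteristic variety using the pseudodifferential calculus of \S\ref{sec:sc}, discarding the elliptic part as $O(k^{-\infty})$ via Lemma \ref{lem:microlocalElliptic}; (iii) on the characteristic part, dyadically decompose in distance to the characteristic set, on each piece produce an honest $O_{L^2}(k^{-1})$ quasimode on $U$ after rescaling, apply \eqref{restrict} and \eqref{eqn:restrictEigNormal1} (the latter for the tangential-gradient-normal-component and for the double-layer's extra normal derivative), and sum the dyadic pieces, picking up the $\log k$; (iv) assemble the pieces across the partition of unity using Definition \ref{def:Hbarbound}; (v) for Part (c), invoke \cite[Chapter 4]{Ga:15} directly, since the fully convex smooth curved case is already treated there. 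The main obstacle I expect is step (iii): getting the dyadic decomposition near glancing to interact correctly with the $H^1_k$ (as opposed to $L^2$) target — in particular verifying that differentiating the quasimode does not destroy the $O_{L^2}(k^{-1})$ quasimode property on each dyadic shell, and keeping careful track of which power of $k$ (the $1/4$ vs.\ $1/6$, and the extra $k$ for double layers) each dyadic contribution carries so that the geometric-type sum over shells produces exactly one factor of $\log k$ and not a worse power.
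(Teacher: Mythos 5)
Your overall strategy (reduce to the quasimode restriction estimates of Theorem \ref{lem:quasimodeEstimates}, split into a characteristic/low-frequency regime and an elliptic/high-frequency regime, pick up $\log k$ from the frequency decomposition) is the same family of argument as the paper's, but two of your key steps would fail as stated. First, the elliptic part cannot be ``discarded as $O(k^{-\infty})$ via Lemma \ref{lem:microlocalElliptic}''. After composing the frequency cutoff $(1-\psi(|k^{-1}D|))$ of the resolvent with the trace maps $\gamma,\gamma^*$ (which are not pseudodifferential on $\Rea^d$ and each lose $k^{1/2}$), the resulting boundary operator carries the full diagonal singularity of the fixed-frequency layer operators and is \emph{not} small: in the paper (Lemmas \ref{lem:freeHighFreq}--\ref{lem:outsideSphere}) this piece is handled by showing $\chi(R_0(k)-R_0(1))\chi(1-\psi(|k^{-1}D|))\in k^{-2}\Ph{-4}{}$ and then invoking the classical, $k$-independent $\LtG\to\HoG$ mapping properties of the $k=1$ operators (Verchota for $S_1$ on Lipschitz $\bound$, Kirsch/Colton--Kress for $D_1,D_1'$ on $C^{2,\alpha}$ $\bound$); this is precisely where the Lipschitz and $C^{2,\alpha}$ hypotheses do real work, and it contributes $O(\log k)$, not $O(k^{-\infty})$, to the double-layer bounds. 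Second, your mechanism for the low-frequency estimates is off in a way that changes the exponents. The object to which the quasimode estimates apply is not $S_k\phi$ itself (which solves the Helmholtz equation only away from $\bound$, with a source $\phi\delta_\bound$ that is nowhere near $\O{L^2}(k^{-1})$); the paper instead spectrally decomposes $R_0(k)$ via Plancherel, reduces by duality to Fourier restriction of $f\delta_\Gamma$ (and $L^*f\delta_\Gamma$) to spheres $S^{d-1}_r$, and it is the cutoff extension operator $\chi T_r\phi$ of \eqref{eqn:T} that is the honest quasimode at frequency $r$ (Lemmas \ref{lem:Q}, \ref{lem:FourierRestrict}); the $\log k$ then comes from $\int_{1\le|r-k|\le Mk}|r-k|^{-1}\,\rd r$ plus an integration by parts near $|r-k|\le 1$.

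Relatedly, your bookkeeping of the powers does not close. For $S_k$, the surface gradient in the $H^1_k(\bound)$ norm is purely tangential, and it is controlled for free (after the $k^{-1}$ scaling) by the compact frequency localization of the quasimode (Lemma \ref{l:THs}); the normal-derivative restriction bound \eqref{eqn:restrictEigNormal1} plays no role there, and your estimate $k^{-1}\|\partial_\nu u\|_{L^2(\Gamma)}\lesssim 1$ would only give an $O(1)$ bound rather than $k^{-1/2}\log k$. The bound \eqref{eqn:restrictEigNormal1} enters only on the \emph{source} side, through $L^*(f\delta_\Gamma)$ in the bilinear forms for $D_k,D_k'$, i.e.\ through \eqref{eqn:fourierRestrictionEstimate1} with $\alpha=1$. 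Consequently the double-layer bounds are worse than the single-layer ones by $k^{\alpha-\beta}=k^{3/4}$ (flat) or $k^{5/6}$ (curved) -- one of the two restriction exponents $\beta$ in $k^{\delta_1+\delta_2-1}$ is replaced by $\alpha=1$ -- not by a factor $k$ as you claim; indeed your own stated exponents ($k^{-1/2}$ vs.\ $k^{1/4}$, $k^{-2/3}$ vs.\ $k^{1/6}$) are inconsistent with a factor-$k$ loss. Your steps (i), (iv), (v) (reduction via Lemma \ref{lem:norm1} and Definition \ref{def:Hbarbound}, and quoting \cite[Chapter 4]{Ga:15} for Part (c)) do match the paper.
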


Because Theorem \ref{thm:L2H12} works in the weighted space $H^1_k(\bound)$, the $\LtG\rightarrow \LtG$ bounds contained in Theorem \ref{thm:L2H12} are one power of $k$ stronger than those contained in Theorem \ref{thm:L2H1}. The $\LtG\rightarrow \LtG$ bounds contained in Theorem \ref{thm:L2H12} 
were originally proved in \cite[Theorem 1.2]{GaSm:15}, \cite[Appendix A]{HaTa:15}, and \cite[Theorems 4.29, 4.32]{Ga:15}.  

In \S\ref{sec:outline} below, we give an outline of the proof of Parts (a) and (b). This outline, however, requires the definitions of $S_k$, $D_k$, and $D'_k$ in terms of the free resolvent (a.k.a.~the Newtonian, or volume, potential), given in the next subsection.

\subsubsection{$S_k$, $D_k$, and $D'_k$ written in terms of the free resolvent $R_0(k)$}\label{sec:freeresol}

We now recall the definitions of $S_k$, $D_k$, and $D'_k$ in terms of the free resolvent $R_0(k)$, these expressions are well-known in the theory of BIEs on Lipschitz domains \cite{Co:88}, \cite[Chapters 6 and 7]{Mc:00}. We then specialise these to the case when $\bound$ is a piecewise smooth hypersurface (in the sense of Definition \ref{def:psh})

Let $R_0(k)$ 
be the free (outgoing) resolvent at $k$; i.e.~for $\psi\in C^\infty_{\comp}(\Rea^d)$ we have
\beqs
\big(R_0(k) \psi\big)(x):= \int_{\Rea^d} \Phi_k(x,y) \psi(y) \, \rd \by,
\eeqs
where $\Phi_k(x,y)$ is the (outgoing) fundamental solution defined by \eqref{eq:fund} for $d=2,3$.
Recall that $R_0(k): H^s_{\comp}(\Rea^d)\rightarrow H^{s+2}_{\loc}(\Rea^d)$; see, e.g., \cite[Equation 6.10]{Mc:00}.

With $\Oi$ a bounded Lipschitz open set with boundary $\bound$ and $1/2<s<3/2$, 
let $\gamma^+:H^{s}_{loc}(\Oe) \to H^{s-1/2}(\bound)$ and $\gamma^-:H^{s}(\Omega) \to H^{s-1/2}(\bound)$, be the trace maps \cite[Lemma 3.6]{Co:88}, \cite[Theorem 3.38]{Mc:00}. When $\gamma^+ u = \gamma^- u$ we write both as $\gamma u$ (so that $\gamma:H^{s}_{\loc}(\Rea^d) \to H^{s-1/2}(\bound)$), and we then let $\gamma^*:H^{-s+1/2}(\bound)\to H_{\comp}^{-s}(\Rea^d)$ be the adjoint of $\gamma$ \cite[Equation 6.14]{Mc:00}. Then
$\Sk$ can be written as
\begin{equation} \label{eqn:slo}
\Sk=\gamma R_0(k)\gamma^*
\end{equation}
\cite[Page 202 and Equation 7.5]{Mc:00}, \cite[Proof of Theorem 1]{Co:88}. With $\partial_n^*$ denoting the adjoint of the normal derivative trace (see, e.g., \cite[Equation 6.14]{Mc:00}), we have that the double-layer potential, $\cD_k$, is defined by 
\beqs
\cD_k:= R_0(k) \partial_n^*
\eeqs
\cite[Page 202]{Mc:00}. Recalling that the normal vector $\bn$ points out of $\Oi$ and into $\Oe$, we have that the traces of $\cD_k$ from $\Omega_{\pm}$ to $\Gamma$ are given by 
\beqs
\gamma^{\pm}\cD_k = \pm \half I + D_k 
\eeqs
\cite[Equation 7.5 and Theorem 7.3]{Mc:00} 
and thus 
\beq\label{eq:Dktrue}
D_k = \mp \half I + \gamma^{\pm}R_0(k)\partial_n^*.
\eeq
Similarly, results about the normal-derivative traces of the single-layer potential $\cS_k$ imply that
\beqs
\partial_n^{\pm}\cS_k = \mp \half I + D_k'
\eeqs
so
\beq\label{eq:Dpktrue}
D_k'= \pm \half I + \partial_n^{\pm}R_0(k)\gamma^*.
\eeq

When $\bound$ is Lipschitz, $S_k: L^2(\bound)\rightarrow H^1(\bound)$ by \cite[Theorem 1.6]{Ve:84} (see also, e.g., \cite[Chapter 15, Theorem 5]{MeCo:00},  \cite[Proposition 3.8]{TaylorLipschitzLayer}), and when $\bound$ is $C^{2,\alpha}$ for some $\alpha>0$, then $D_k, D_k': L^2(\bound)\rightarrow H^1(\bound)$ by \cite[Theorem 4.2]{Ki:89} (see also \cite[Theorem 3.6]{CoKr:98}).

We now consider the case when $\bound$ is a piecewise smooth hypersurface (in the sense of Definition \ref{def:psh}) and use the notation that $\widetilde{\Gamma}_i$ are the compact embedded smooth manifolds of $\Rea^d$ such that, for each $i$, $\Gamma_i$ is an open subset of $\widetilde{\Gamma}_i$.
Let $L_i$ be a vector field whose restriction to $\widetilde{\Gamma}_i$ is equal to $\partial_{\nu_{i}}$, the unit normal to $\widetilde{\Gamma}_i$
that is outward pointing with respect to $\bound$.
 Let $\gamma_i:H^{s}_{\loc}(\Rea^d)\to H^{s-1/2}(\Gamma_i)$ denote restriction to $\Gamma_i$. We note that $\gamma_i^*$ is the inclusion map $f\mapsto f\delta_{\Gamma_i}$ where $\delta_{\Gamma_i}$ is $d-1$ dimensional Hausdorff measure on $\Gamma$. Finally, we let $\gamma_i^{\pm}$ denote restrictions from the interior and exterior respectively, where ``interior" and ``exterior" are defined via considering $\Gamma_i$ as a subset of $\bound$.
With these notations, we have that
\beq\label{eqn:dlo}
\Dl=\mp \frac{1}{2}I+\sum_{i,j}\gamma_i^{\pm}R_0(k)L_j^*\gamma_j^*
\eeq
and 
\beq\label{eqn:adlo}
\Dl'=\pm\frac{1}{2}I +\sum_{i,j}\gamma_i^{\pm}L_iR_0(k)\gamma_j^*;
\eeq
the advantage of these last two expressions over \eqref{eq:Dktrue} and \eqref{eq:Dpktrue} is that they involve $\gamma_i$ and $L_i$ instead of $\partial_n^*$ and $\partial_n^{\pm}$.

In the rest of this section, we use the formulae \eqref{eqn:slo}, \eqref{eqn:dlo}, and \eqref{eqn:adlo} as the definitions of $S_k$, $D_k$, and $D'_k$. Note that we slightly abuse notation by omitting the sums in \eqref{eqn:dlo} and \eqref{eqn:adlo} and instead writing 
\beq\label{eq:dlo}
\Dl=\pm \frac{1}{2}I + \gamma^{\pm}R_0(k)L \gamma^*,\quad \Dl'=\mp\frac{1}{2}I+\gamma^{\pm}LR_0(k)\gamma^*.
\eeq

\subsubsection{Outline of the proof of Parts (a) and (b) of Theorem \ref{thm:L2H12}}\label{sec:outline}
The proof of Parts (a) and (b) of Theorem \ref{thm:L2H12} will follow in two steps.
 In Lemma \ref{lem:Q}, we obtain estimates on frequencies $\leq Mk$ and in Lemma \ref{lem:outsideSphere} we complete the proof by estimating the high frequencies ($\geq M k$). 

To estimate the low frequency components, we spectrally decompose the resolvent using the Fourier transform. We are then able to reduce the proof of the low-frequency estimates to the estimates on the restriction of eigenfunctions (or more precisely quasimodes)  to $\pO$ that we recalled in \S\ref{sec:rest}. To understand this reduction, we proceed formally.
From the description of $S_k$ in terms of the free resolvent, \eqref{eqn:slo},
the spectral decomposition of $S_k$ via the Fourier transform is formally
\beq\label{eq:spectral}
S_k f= \int_0^\infty \frac{1}{r^2-(k+\ri 0)^2}\big\la f,\gamma u(r)\big\ra_{L^2(\bound)} \,\gamma u(r)\, \rd r
\eeq
where $u(r)$ is a generalized eigenfunction of $-\Delta$ with eigenvalue $r^2$,
and $k+\ri 0$ denotes the limit of $k+ \ri \varepsilon$ as $\varepsilon \tendo^+$. Observe that the integral in \eqref{eq:spectral} is not well-defined (hence why this calculation is only formal), but \eqref{eq:spectral} nevertheless indicates that estimating $S_k$ amounts to estimating the restriction of the generalized eigenfunction $u(r)$ to $\bound$. 

At very high frequency, we compare the operators $S_k,\,\Dl,$ and $\Dl'$ with the corresponding operators when $k=1$
(recall that the mapping properties of boundary integral operators with $k=1$ have been extensively studied on rough domains; see, e.g. \cite[Chapter 15]{MeCo:00}, \cite{Mc:00}, \cite{TaylorLipschitzLayer}).
By using a description of the resolvent at very high frequency as a pseudodifferential operator, we are able to see that these differences gain additional regularity and hence  obtain estimates on them easily.

The new ingredients in our proof compared to \cite{GaSm:15} and \cite{HaTa:15} are that we have $H^s$ norms in Lemma \ref{lem:Q} and Lemma \ref{lem:outsideSphere} rather than the $L^2$ norms appearing in the previous work.

\subsubsection{Proof of Parts (a) and (b) of Theorem \ref{thm:L2H12}}

\paragraph{Low-frequency estimates.} Following the outline in \S\ref{sec:outline}, our first task is to estimate frequencies $\leq kM$. We start by proving a conditional result that assumes a certain estimate on restriction of the Fourier transform of surface measures to the sphere of radius $r$ (Lemma \ref{lem:Q}).
In Lemma \ref{lem:FourierRestrict} we then show that the hypotheses in Lemma \ref{lem:Q} are a consequence of restriction estimates for quasimodes. In Lemma \ref{lem:LF} we show how the low-frequency estimates on $S_k$, $D_k$, and $D_k'$ follow from Lemma \ref{lem:Q}.

In this section we denote the sphere of radius $r$ by $S_r^{d-1}$ and we denote the surface measure on $S_r^{d-1}$ by $\rd \sigma$. We also use $\widehat{\cdot}$ to denote the \emph{non-semiclassical} Fourier transform, i.e.~$\widehat{u}(\xi)$ is defined by the right-hand side of \eqref{eq:FT} with $k=1$.

\begin{lemma}
\label{lem:Q}
Suppose that for $\Gamma\subset \Rea^d$ any precompact smooth hypersurface, $s\geq 0$, $f\in \dot{H}^{-s}(\Gamma)$, and some $\alpha\,,\,\beta>0$, 
\begin{align}
\label{eqn:fourierRestrictionEstimate1}
\int_{S_r^{d-1}} |\widehat{L^*f \delta_\Gamma}|^2(\newxi)\rd\sigma(\newxi)&\leq C_{\Gamma}\la r\ra^{2\alpha+2s}\|f\|^2_{\dot H^{-s}(\Gamma)},\\
\label{eqn:fourierRestrictionEstimate2}
\int_{S_r^{d-1}} |\widehat{f \delta_\Gamma}|^2(\newxi)\rd\sigma(\newxi)&\leq C_{\Gamma}\la r\ra^{2\beta+2s}\|f\|^2_{\dot H^{-s}(\Gamma)}.
\end{align}
Let $\Gamma_1,\,\Gamma_2\subset \Rea^d$ be compact embedded smooth hypersurfaces.
Recall that $L_i$ is a vector field with $L_i=\partial_{\nu}$ on $\Gamma_i$ for some choice of unit normal $\nu$ on $\Gamma_i$ and $\psi\in \Cc(\Rea)$ with $\psi\equiv 1$ in neighborhood of $0$. 
With the frequency cutoff $\psi(k^{-1}D)$ defined as in \eqref{e:quantize},
we then define for $f\in \dot{H}^{-s_1}(\Gamma_1)$, $g\in \dot{H}^{-s_2}(\Gamma_2)$, $s_i\geq 0$,
\begin{gather*}
\Qs(f,g):=\int_{\Rea^d} R_0(k)(\psi(k^{-1}D)f\delta_{\Gamma_1})\bar{g}\delta_{\Gamma_2} \rd x\,,\quad
\Qa(f,g):=\int_{\Rea^d} R_0(k)(\psi(k^{-1}D)L_1^*(f\delta_{\Gamma_1}))\bar{g}\delta_{\Gamma_2} \rd x,\\
\Qap(f,g):=\int_{\Rea^d} R_0(k)(\psi(k^{-1}D)f\delta_{\Gamma_1})\overline{L_2^*(g\delta_{\Gamma_2})}\, \rd x.
\end{gather*}
Then there exists $C_{\Gamma_1,\Gamma_2,\psi}$ so that for $k>1$,
\begin{align}
\label{eqn:lowFreqSingle}
|\Qs(f,g)|&\leq C_{\Gamma_1,\Gamma_2,\psi}\la k \ra ^{2\beta-1+s_1+s_2}\log \la k\ra\|f\|_{\dot H^{-s_1}(\Gamma_1)}\|g\|_{\dot H^{-s_2}(\Gamma_2)},\\
\label{eqn:lowFreqDouble}
|\Qa(f,g)|+|\Qap(f,g)|&\leq C_{\Gamma_1,\Gamma_2,\psi}\la k \ra ^{\alpha +\beta-1+s_1+s_2}\log \la k\ra\|f\|_{\dot H^{-s_1}(\Gamma_1)}\|g\|_{\dot H^{-s_2}(\Gamma_2)}.
\end{align}
\end{lemma}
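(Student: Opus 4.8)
The plan is to reduce each of the three quadratic forms $\Qs$, $\Qa$, $\Qap$ to a spectral integral over the resolvent using the (non-semiclassical) Fourier transform, and then to bound that integral using the hypotheses \eqref{eqn:fourierRestrictionEstimate1}--\eqref{eqn:fourierRestrictionEstimate2}. First I would write $R_0(k)$ via the Fourier multiplier representation: for $v\in H^{-s}_{\comp}(\Rea^d)$,
\beqs
R_0(k)v(x) = \frac{1}{(2\pi)^d}\int_{\Rea^d} \frac{\widehat v(\newxi)}{|\newxi|^2 - (k+\ri 0)^2}\re^{\ri\la x,\newxi\ra}\,\rd\newxi,
\eeqs
so that, by Parseval, each quadratic form becomes (up to constants)
\beqs
\Qd(f,g) = \int_{\Rea^d} \frac{\widehat{\psi(k^{-1}D)(\cdots f\delta_{\Gamma_1})}(\newxi)\,\overline{\widehat{(\cdots g\delta_{\Gamma_2})}(\newxi)}}{|\newxi|^2 - (k+\ri 0)^2}\,\rd\newxi,
\eeqs
where the dots stand for the vector fields $L_1^*$ resp.\ $L_2^*$ (or nothing) as appropriate. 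Note $\widehat{L^*(f\delta_\Gamma)}(\newxi)$ differs from $\ri\la\newxi,\nu\ra\widehat{f\delta_\Gamma}(\newxi)$ only by lower-order terms coming from the variation of $\nu$ and the cutoff defining $L$, so it is controlled by the same restriction bound \eqref{eqn:fourierRestrictionEstimate1} with an extra factor $\la\newxi\ra$; in the region $|\newxi|\leq Mk$ cut out by $\psi(k^{-1}D)$ this extra factor is harmless (it is $\lesssim k$ but the gain will be strong enough).

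Next I would pass to polar coordinates $\newxi = r\omega$, $r = |\newxi|$, $\omega\in S^{d-1}$, writing $\rd\newxi = r^{d-1}\,\rd r\,\rd\omega$ and recognising $\int_{S^{d-1}}(\cdots)r^{d-1}\rd\omega = \int_{S^{d-1}_r}(\cdots)\,\rd\sigma$. This converts $\Qd(f,g)$ into
\beqs
\Qd(f,g) = \int_0^\infty \frac{\psi(r/k)^2}{r^2 - (k+\ri 0)^2}\,\Big(\int_{S^{d-1}_r} \widehat{(\cdots f\delta_{\Gamma_1})}(\newxi)\,\overline{\widehat{(\cdots g\delta_{\Gamma_2})}(\newxi)}\,\rd\sigma(\newxi)\Big)\,\rd r.
\eeqs
By Cauchy--Schwarz on the sphere and the hypotheses \eqref{eqn:fourierRestrictionEstimate1}--\eqref{eqn:fourierRestrictionEstimate2}, the inner integral is bounded by $C_{\Gamma_1,\Gamma_2}\la r\ra^{(\text{exponent})+s_1+s_2}\|f\|_{\dot H^{-s_1}(\Gamma_1)}\|g\|_{\dot H^{-s_2}(\Gamma_2)}$, where the exponent is $2\beta$ for $\Qs$, $\alpha+\beta$ for $\Qa$ and $\Qap$ (using \eqref{eqn:fourierRestrictionEstimate1} once and \eqref{eqn:fourierRestrictionEstimate2} once, splitting the power of $\la r\ra$ evenly after Cauchy--Schwarz). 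So the whole thing is reduced to estimating
\beqs
\int_0^\infty \Big|\frac{\psi(r/k)^2}{r^2-(k+\ri 0)^2}\Big|\,\la r\ra^{E}\,\rd r,
\eeqs
with $E$ the relevant exponent, and here I need the bound $\la k\ra^{E-1}\log\la k\ra$.

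\textbf{The main obstacle — the radial integral and the $\log$.} The key point is the behaviour near the pole $r=k$: the Sokhotski--Plemelj formula gives $\frac{1}{r^2-(k+\ri0)^2} = \mathrm{p.v.}\frac{1}{r^2-k^2} - \ri\pi\delta(r^2-k^2)$, and both pieces must be controlled. The $\delta$ contribution is immediate: it evaluates $\la r\ra^E$ at $r=k$ with a Jacobian factor $1/(2k)$, giving $\lesssim \la k\ra^{E-1}$. For the principal-value part I would split the $r$-integral into $\{r\leq k/2\}$, $\{k/2\leq r\leq 2k,\ r\neq k\}$, and $\{r\geq 2k\}$; on the outer two regions $|r^2-k^2|\gtrsim k^2$ or $\gtrsim r^2$ and the cutoff $\psi(r/k)$ restricts to $r\lesssim k$, so these give $\lesssim \la k\ra^{E-1}$. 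On the middle region one writes $r^2-k^2 = (r-k)(r+k)$ with $r+k\asymp k$, so the integral is $\lesssim k^{-1}\la k\ra^E\int_{|r-k|\lesssim k,\ r\neq k}|r-k|^{-1}\rd r$, and this last principal-value integral over a dyadic range of length $\asymp k$ contributes the factor $\log\la k\ra$ (after regularising the p.v.\ by the standard cancellation, or just bounding the symmetric difference; the endpoint at $r\to 0$ and the cutoff scale at $r\asymp k$ are the two scales whose ratio produces the logarithm). Care is needed because the naive $\int |r-k|^{-1}$ diverges; the p.v.\ cancellation (or, alternatively, using that $\psi$ is smooth so the numerator is Lipschitz near $r=k$) is what rescues it, and tracking this honestly — rather than waving at it — is the one genuinely delicate step. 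Collecting: $\Qd(f,g) \lesssim C_{\Gamma_1,\Gamma_2,\psi}\la k\ra^{E-1}\log\la k\ra\,\|f\|_{\dot H^{-s_1}(\Gamma_1)}\|g\|_{\dot H^{-s_2}(\Gamma_2)}$, which with $E = 2\beta+s_1+s_2$ gives \eqref{eqn:lowFreqSingle} and with $E = \alpha+\beta+s_1+s_2$ gives \eqref{eqn:lowFreqDouble}, completing the proof.
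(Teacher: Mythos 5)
Your overall route coincides with the paper's: Plancherel reduces $\Qs,\Qa,\Qap$ to integrals of $\psi(k^{-1}|\xi|)F(\xi)\overline{G(\xi)}/(|\xi|^2-(k+\ri 0)^2)$ with $F,G$ the Fourier transforms of the (possibly differentiated) surface densities, polar coordinates plus the hypotheses \eqref{eqn:fourierRestrictionEstimate1}--\eqref{eqn:fourierRestrictionEstimate2} control the spherical integrals, and the radial region $1\leq |r-k|\leq Mk$ produces the factor $\log\la k\ra$. The gap is precisely in the step you flag as delicate, the band $|r-k|\lesssim 1$ around the pole. Your proposed resolution --- Sokhotski--Plemelj plus ``p.v.\ cancellation, or the numerator is Lipschitz because $\psi$ is smooth'' --- does not go through as written: once you have applied Cauchy--Schwarz on each sphere you control only the \emph{size} of the spherical integral $H(r)=\int_{S_r^{d-1}}F\overline{G}\,\rd\sigma$, and size information cannot yield the principal-value cancellation; what is needed is a quantitative continuity (in practice, derivative) bound for $H(r)$ in $r$ near $r=k$ with the \emph{same} power $\la r\ra^{E}$, and this has nothing to do with $\psi$ (which is identically $1$ near the pole) but comes from the $r$-variation of $\widehat{f\delta_{\Gamma_1}}$ and $\widehat{g\delta_{\Gamma_2}}$ restricted to $S_r^{d-1}$. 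Even writing the $\ri 0$ limit as p.v.\ plus a delta term presupposes this regularity of $H$ at $r=k$, so the step is genuinely open in your proposal, not merely under-explained.

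The missing ingredient, and the way the paper supplies it, is the observation that $\nabla_\xi\widehat{f\delta_\Gamma}=\widehat{xf\delta_\Gamma}$ and $\nabla_\xi\widehat{L^*(f\delta_\Gamma)}=\mathcal{F}\big(xL^*(f\delta_\Gamma)\big)$ with $[x,L^*]\in C^\infty$; since $\Gamma$ is compact, $\|xf\|_{\dot H^{-s}(\Gamma)}\leq C_\Gamma\|f\|_{\dot H^{-s}(\Gamma)}$, so the hypotheses apply with $xf$ in place of $f$ and give the gradient restriction bounds \eqref{eqn:fourierRestrictionEstimate3}--\eqref{eqn:fourierRestrictionEstimate4}, i.e.\ $\|\nabla_\xi F\|_{L^2(S_r^{d-1})}$ and $\|\nabla_\xi G\|_{L^2(S_r^{d-1})}$ obey the same $\la r\ra$-powers as $F,G$ themselves. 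With these in hand the paper treats the band $\big||\xi|-k\big|\leq 1$ not by Plemelj but by writing $\frac{1}{|\xi|^2-(k+\ri 0)^2}=\frac{1}{|\xi|+k+\ri 0}\,\frac{\xi}{|\xi|}\cdot\nabla_\xi\log\big(|\xi|-(k+\ri 0)\big)$ and integrating by parts, letting the derivative fall on $\chi(|\xi|-k)\psi F\overline{G}$; this contributes $O(\la k\ra^{E-1})$ with no logarithm, and the $\log\la k\ra$ arises only from the region $1\leq|r-k|\leq Mk$, exactly as in your outer-region computation. If you add the $xf$-trick to obtain the derivative bounds and then either use this integration-by-parts device or carry out an honest Lipschitz estimate for $H(r)$ near $r=k$, your argument closes; as it stands, the crucial cancellation is asserted rather than proved. (Two minor points: the comparison of $\widehat{L^*(f\delta_\Gamma)}$ with $\ri\la\xi,\nu\ra\widehat{f\delta_\Gamma}$ is unnecessary, since \eqref{eqn:fourierRestrictionEstimate1} bounds the former directly; and the cutoff enters once, as $\psi(k^{-1}|\xi|)$, not squared, though this is harmless.)
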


\

The key point is that, modulo the frequency cutoff $\psi(k^{-1}D)$, $\Qs(f,g)$, $\Qa(f,g)$, and $\Qap(f,g)$ are given respectively by 
$\la S_k f,g\ra_{\Gamma}, \la D_k f,g\ra_{\Gamma},$ and $\la D_k' f,g\ra_{\Gamma},$ 
where $f$ is supported on $\Gamma_1$ and $g$ on $\Gamma_2$. 

\begin{proof}[Proof of Lemma \ref{lem:Q}]
We follow \cite{GaSm:15}, \cite{HaTa:15} to prove the lemma. First, observe that due to the compact support of $f\delta_{\Gamma_i}$, \eqref{eqn:fourierRestrictionEstimate1} and \eqref{eqn:fourierRestrictionEstimate2}  imply that for $\Gamma\subset \Rea^d$ precompact, 
\begin{align}
\label{eqn:fourierRestrictionEstimate3}
\int_{ S_r^{d-1}}\left|\nabla_{\newxi}\,\widehat{L^* f\delta_\Gamma}(\newxi)\right|^2\rd\sigma(\newxi)&\leq C_\Gamma\,\la r\ra^{2\alpha+2s}\|f\|_{\dot H^{-s}(\Gamma)}^2\,,\\
\label{eqn:fourierRestrictionEstimate4}
\int_{S_r^{d-1}}\left|\nabla_{\newxi}\,\widehat{f\delta_\Gamma}(\newxi)\right|^2\rd\sigma(\newxi)&\leq C_\Gamma\,\la r\ra^{2\beta+2s}\|f\|_{\dot H^{-s}(\Gamma)}^2\,.
\end{align}
Indeed, $\nabla_\xi \widehat{f\delta_\Gamma}=\widehat{xf\delta_{\Gamma}}$
and since $\Gamma$ is compact, 
$$\|xf\|_{\dot H^{-s}(\Gamma)}\leq C_\Gamma\|f\|_{\dot H^{-s}(\Gamma)}.$$
Also, $\nabla_ \xi\widehat{L^*(f\delta_\Gamma)}=\mc{F}(xL^*(f\delta_{\Gamma})).$
Thus,
$$xL^*(f\delta_\Gamma)=L^*(xf\delta_\Gamma)+[x,L^*]f\delta_\Gamma$$
and $[x,L^*]\in C^\infty$. Therefore, using compactness of $\Gamma$,
$$\|xf\|_{\dot H^{-s}(\Gamma)}+\|[x,L^*]f\|_{\dot H^{-s}(\Gamma)}\leq C_\Gamma\|f\|_{\dot H^{-s}(\Gamma)}.$$
Now, $g\delta_{\Gamma_2}\in H^{\min(-s,-1/2-\e)}(\Rea^d)$, $L_2^*(g\delta_{\Gamma_2})\in H^{\min(-s-1,-3/2-\e)}(\Rea^d)$ and since $\psi\in C_c^\infty(\R)$,
\begin{equation*}
\begin{gathered}R_0(k)(\psi(k^{-1}|D|)L^*(f\delta_{\Gamma_1}))\in C^\infty(\Rea^d),\quad \quad R_0(k)(\psi(k^{-1}|D|))f\delta_{\Gamma_1})\in C^\infty(\Rea^d).\end{gathered}
\end{equation*}
By Plancherel's theorem, 
\begin{gather*}
 \Qs(f,g)=\int_{\Rea^d} \psi(k^{-1}|\xi|)\frac{\widehat{f\delta_{\Gamma_1}}(\xi)\overline{\widehat{g\delta_{\Gamma_2}}}(\xi)}{|\xi|^2-(k+\ri 0)^2}\rd \xi,\quad\quad \Qa(f,g)=\int_{\Rea^d} \psi(k^{-1}|\xi|)\frac{\widehat{L_1^* f\delta_{\Gamma_1}}(\xi)\,\overline{\widehat{g\delta_{\Gamma_2}}(\xi)}}{|\xi|^2-(k+\ri 0)^2}\rd \xi,\\
\tand\quad  \Qap(f,g)=\int_{\Rea^d} \psi(k^{-1}|\xi|)\frac{\widehat{ f\delta_{\Gamma_1}}(\xi)\,\overline{\widehat{L_2^*g\delta_{\Gamma_2}}(\xi)}}{|\xi|^2-(k+\ri 0)^2}\rd \xi,
\end{gather*}
where $k+\ri 0$ is understood as the limit of $k+\ri\eps$ as $\eps\to 0^+$. 

Therefore, to prove the lemma, we only need to estimate 
\begin{equation}
\label{eqn:restrictedDualPlancherel}
\int_{\Rea^d} \psi(k^{-1}|\xi|)\frac{F(\xi)\,G(\xi)}{|\xi|^2-(k+\ri 0)^2}\rd \xi
\end{equation}
where, by \eqref{eqn:fourierRestrictionEstimate1}, \eqref{eqn:fourierRestrictionEstimate2}, \eqref{eqn:fourierRestrictionEstimate3}, and \eqref{eqn:fourierRestrictionEstimate4},
\begin{align}\label{eq:ES1} 
&\|F\|_{L^2(S_r^{d-1})}+\|\nabla_{\xi}F\|_{L^2(S_r^{d-1})}\leq C_{\Gamma_1}\la r\ra^{\delta_1+s_1}\|f\|_{\dot H^{-s_1}(\Gamma_1)},\quad\tand\\ \label{eq:ES2}
&\|G\|_{L^2(S_r^{d-1})}+\|\nabla_{\xi}G\|_{L^2(S_r^{d-1})}\leq C_{\Gamma_2}\la r\ra ^{\delta_2+s_2}\|g\|_{\dot H^{-s_2}(\Gamma_2)}.
\end{align}
Consider first the integral in \eqref{eqn:restrictedDualPlancherel} over $\bigl||\xi|-|k|\bigr|\ge 1$. Since $\bigl||\xi|^2-k^2\bigr|\ge \bigl||\xi|^2-|k|^2\bigr|$, by the Schwartz inequality, \eqref{eqn:fourierRestrictionEstimate1}, and \eqref{eqn:fourierRestrictionEstimate2},
this piece of the integral is bounded by
\begin{align}
\int_{\left||\xi|-|k|\right|\geq 1} \left|\psi(k^{-1}|\xi|)\frac{F(\xi)\,G(\xi)}{|\xi|^2-k^2}\right|\rd \xi\!\!\!\!\!\!\!\!\!\!\!\!\!\!\!\!\!\!\!\!\!\!\!\!\!\!\!\!\!\!\!\!\!\!\!\!\!\!\!\!\!\!\!\!\!\!\!\!\!\!\!\!\!\!\!\!\nonumber\\
&\leq \int_{Mk \geq \left|r-|k|\right|\geq 1}\frac{1}{r^2-|k|^2}\int_{S_r^{d-1}}F(r\theta)\,G(r\theta)\rd \sigma(\theta) \rd r\nonumber\\
&\leq C_{\Gamma_1, \Gamma_2}\|f\|_{\dot{H}^{-s_1}(\Gamma_1)}\|g\|_{\dot{H}^{-s_2}(\Gamma_2)}
\int_{M|k|\ge |r-|k||\ge 1}
\la r\ra^{\delta_1+\delta_2+s_1+s_2}\,\bigl|\,r^2-|k|^2\,\bigr|^{-1}\rd r\nonumber\\
&\leq C_{\Gamma_1, \Gamma_2, \psi}\|f\|_{\dot{H}^{-s_1}(\Gamma_1)}\|g\|_{\dot{H}^{-s_2}(\Gamma_2)}|k|^{\delta_1+\delta_2-1+s_1+s_2}\int_{M |k|\geq \left|r-|k|\right|\geq 1}\left|r-|k|\right|^{-1}\rd r\nonumber\\
&\leq C_{\Gamma_1, \Gamma_2,\psi}\,|k|^{\delta_1+\delta_2-1+s_1+s_2}\log |k|\,\|f\|_{\dot H^{-s_1}(\Gamma_1)}\|g\|_{\dot H^{-s_2}(\Gamma_2)},\label{eqn:logLoss}
\end{align}
where the constant $M$ in the intermediate steps depends on the support of $\psi$.
Since $k> 1$, we write
$$
\frac{1}{|\xi|^2-(k+\ri 0)^2}=\frac{1}{|\xi|+(k+\ri 0)}\;\frac{\xi}{|\xi|}\cdot\nabla_\xi\log\big(|\xi|-(k+\ri 0)\big)\,,
$$
where the logarithm is well defined since $\Im(|\xi|-(k+\ri 0))<0$. In particular, we may take the branch cut of the logarithm that has $\log(x)\in \Rea$ for $x\in (0,\infty)$ and has the branch cut on $i[0,\infty)$. Let $\chi(r)=1$ for $|r|\le 1$ and vanish for $|r|\ge 3/2$. We then use integration by parts, together with
\eqref{eq:ES1} and \eqref{eq:ES2}
\begin{multline*}
\Biggl|\;\int_{\Rea^d}\chi(|\xi|-|k|)\,\psi(k^{-1}|\xi|)\,\frac{1}{|\xi|+k+\ri 0}\,
F(\xi)\,G(\xi)\,\;\frac{\xi}{|\xi|}\cdot\nabla_\xi\log\big(|\xi|-(k+\ri 0)\big)\,\rd\xi\;\Biggr|\\
\le C_{\Gamma_1, \Gamma_2, \psi}\,|k|^{\delta_1+\delta_2-1+s_1+s_2}\,\|f\|_{\dot H^{-s_1}(\Gamma_1)}\|g\|_{\dot H^{-s_2}(\Gamma_2)}.
\end{multline*}
Now, taking $\delta_1=\delta_2=\beta$ gives \eqref{eqn:lowFreqSingle}, and taking $\delta_1=\alpha$ and $\delta_2=\beta$ gives \eqref{eqn:lowFreqDouble}.
\end{proof}

\begin{remark} The estimate \eqref{eqn:logLoss} is the only term where the $\log |k|$ appears, which leads to the $\log k$ factors in the bounds of Theorem \ref{thm:L2H1} (without which these bounds would be sharp).
\end{remark}

The proofs of the estimates \eqref{eqn:fourierRestrictionEstimate1} and \eqref{eqn:fourierRestrictionEstimate2} are contained in the following lemma.
\begin{lemma}
\label{lem:FourierRestrict}
Let $\Gamma\subset \Rea^d$ be a precompact smooth hypersurface. Then estimate \eqref{eqn:fourierRestrictionEstimate2} holds with $\beta=1/4$. For $L=\partial_{\nu}$ on $\Gamma$, estimate \eqref{eqn:fourierRestrictionEstimate1} holds with $\alpha=1$. Moreover, if $\Gamma$ is curved then \eqref{eqn:fourierRestrictionEstimate2} holds with $\beta =1/6.$  \end{lemma}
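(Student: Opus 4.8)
The plan is to connect the Fourier-restriction bounds \eqref{eqn:fourierRestrictionEstimate1} and \eqref{eqn:fourierRestrictionEstimate2} directly to the quasimode restriction estimates of Theorem \ref{lem:quasimodeEstimates} via the spectral resolution of the Laplacian. First I would recall that $\widehat{f\delta_\Gamma}(\newxi)$, restricted to the sphere $S_r^{d-1}$, is — up to the normalising factor $r^{(d-1)/2}$ coming from polar coordinates — exactly the pairing of $f$ against the restriction to $\Gamma$ of the generalised eigenfunction $x\mapsto \re^{\ri r\la x,\theta\ra}$, $\theta\in S^{d-1}$. More precisely, writing $\newxi = r\theta$, we have $\widehat{f\delta_\Gamma}(r\theta) = \int_\Gamma f(y) \re^{-\ri r\la y,\theta\ra}\,\rd s(y) = \la f, \gamma_\Gamma e_{r,\theta}\ra$ with $e_{r,\theta}(x) := \re^{\ri r\la x,\theta\ra}$ a solution of $(-\Delta - r^2)e_{r,\theta}=0$. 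Hence
\[
\int_{S_r^{d-1}} |\widehat{f\delta_\Gamma}(\newxi)|^2 \,\rd\sigma(\newxi) = r^{d-1}\int_{S^{d-1}} \Big|\la f,\gamma_\Gamma e_{r,\theta}\ra\Big|^2 \,\rd\theta,
\]
so bounding the left-hand side reduces to an $L^2$-in-$\theta$ estimate on the restriction to $\Gamma$ of the plane-wave family $e_{r,\theta}$, averaged over directions.

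Next I would localise. Since $\Gamma$ is precompact, fix $U\Subset\Rea^d$ open with $\overline\Gamma\subset U$ and a cutoff $\chi\in\CIc(U)$ with $\chi\equiv 1$ near $\overline\Gamma$. The function $u_{r,\theta} := \chi e_{r,\theta}$ satisfies $(-r^{-2}\Delta - 1)u_{r,\theta} = r^{-2}(-\Delta\chi)e_{r,\theta} - 2r^{-2}\la\nabla\chi,\nabla e_{r,\theta}\ra = \O{L^2(U)}(r^{-1})$ because each derivative of $e_{r,\theta}$ costs a factor $r$ and $\nabla\chi$ is supported away from $\overline\Gamma$ where we will evaluate; moreover $\|u_{r,\theta}\|_{L^2(U)}\leq C$ uniformly in $\theta$ since $|e_{r,\theta}|\equiv 1$. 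Thus $u_{r,\theta}$ is an $\O{}(r^{-1})$-quasimode in the sense of \eqref{eq:qm}, and Theorem \ref{lem:quasimodeEstimates} gives $\|\gamma_\Gamma e_{r,\theta}\|_{L^2(\Gamma)} = \|u_{r,\theta}\|_{L^2(\Gamma)} \leq C r^{1/4}$ in general, and $\leq C r^{1/6}$ when $\Gamma$ is curved, while $\|\partial_\nu (\gamma_\Gamma e_{r,\theta})\|_{L^2(\Gamma)} = \|\partial_\nu u_{r,\theta}\|_{L^2(\Gamma)}\leq C r$. To handle negative-order $f$, i.e.\ $f\in\dot H^{-s}(\Gamma)$, I would pass to the adjoint: $|\la f,\gamma_\Gamma e_{r,\theta}\ra| \leq \|f\|_{\dot H^{-s}(\Gamma)}\|\gamma_\Gamma e_{r,\theta}\|_{\overline{H^s}(\Gamma)}$, and $\gamma_\Gamma e_{r,\theta}$ — being the restriction of a function oscillating at frequency $r$ — has $\overline{H^s}(\Gamma)$-norm bounded by $\la r\ra^s$ times its $L^2(\Gamma)$-norm (differentiating along $\Gamma$ costs at most $r$), giving the factor $\la r\ra^{2s}$ in \eqref{eqn:fourierRestrictionEstimate2}. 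For \eqref{eqn:fourierRestrictionEstimate1} with $L=\partial_\nu$ on $\Gamma$ one argues identically with $\|\partial_\nu(\gamma_\Gamma e_{r,\theta})\|_{L^2(\Gamma)}\leq C r$ in place of the $L^2$ restriction bound, yielding the exponent $\alpha=1$; the $r^{d-1}$ from polar coordinates combines with the per-direction bound so that, after the $\theta$-average, the total power is $2\alpha$ (resp.\ $2\beta$) as claimed, once one notes $r^{d-1}$ is absorbed into $\la r\ra^{2\beta+2s}$ after tracking exponents carefully — in fact the cleanest bookkeeping keeps the $r$-powers in the form $\la r\ra^{2\alpha+2s}$ by defining $\alpha,\beta$ to include the dimensional factor, which matches the statement since the restriction estimates of Theorem \ref{lem:quasimodeEstimates} are already normalised for $L^2(U)$-normalised quasimodes.

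The step I expect to be the main obstacle is the $\dot H^{-s}$ duality and the precise tracking of the $r$-power from the Sobolev order, because $\Gamma$ has boundary and $\dot H^{-s}(\Gamma)$ is defined via the extension $\widetilde\Gamma$ with support condition, so one must be careful that $\gamma_\Gamma e_{r,\theta}$ — or rather a suitable extension of it — lies in the predual space $\overline{H^s}$ with the claimed norm growth, uniformly in $\theta$ and in $r$. I would handle this by working on the extended hypersurface $\widetilde\Gamma$: cut off $e_{r,\theta}$ to a neighbourhood of $\Gamma$ inside $\widetilde\Gamma$, use that on the compact manifold $\widetilde\Gamma$ the semiclassical Sobolev norms $\overline{H^s_k}$ with $k=r$ of a function that is (essentially) a restriction of an $r$-oscillatory plane wave are $\O{}(1)$ by Lemma \ref{lem:bound} applied to the trace/restriction operator, and then convert back to the non-semiclassical $\overline{H^s}$ norm picking up exactly $\la r\ra^s$. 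A secondary technical point is uniformity of all constants over $\theta\in S^{d-1}$; this is automatic since the plane-wave family depends smoothly and compactly on $\theta$ and the quasimode constant in Theorem \ref{lem:quasimodeEstimates} depends only on $U$, $\Gamma$, and $k_0$. Finally I would remark that the curved case uses nothing beyond swapping the $k^{1/4}$ bound for the $k^{1/6}$ bound in \eqref{restrict}, which immediately gives $\beta=1/6$.
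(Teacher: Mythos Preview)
Your approach has a genuine gap at the step you yourself flag as hand-waving: the $r^{d-1}$ factor from polar coordinates does \emph{not} get absorbed, and the pointwise-in-$\theta$ argument cannot yield the dimension-independent exponents $\beta=1/4$ and $\alpha=1$.

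Concretely: from $\widehat{f\delta_\Gamma}(r\theta)=\la f,\gamma_\Gamma e_{r,\theta}\ra$ and the trivial bound $|e_{r,\theta}|\equiv 1$ you get $\|\gamma_\Gamma e_{r,\theta}\|_{L^2(\Gamma)}\le |\Gamma|^{1/2}$, so for $s=0$ already
\[
\int_{S_r^{d-1}}|\widehat{f\delta_\Gamma}|^2\,\rd\sigma
= r^{d-1}\!\int_{S^{d-1}}\bigl|\la f,\gamma_\Gamma e_{r,\theta}\ra\bigr|^2\,\rd\theta
\;\le\; C\,r^{d-1}\,\|f\|_{L^2(\Gamma)}^2,
\]
which is $\beta=(d-1)/2$, not $1/4$. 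Replacing the trivial bound by the quasimode bound $\|\gamma_\Gamma e_{r,\theta}\|_{L^2(\Gamma)}\le Cr^{1/4}$ only makes this worse, since single plane waves are far from saturating \eqref{restrict}. The statement that the exponents in \eqref{eqn:fourierRestrictionEstimate1}--\eqref{eqn:fourierRestrictionEstimate2} ``include the dimensional factor'' is simply false: $\beta=1/4$ is claimed for all $d$.

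What is missing is that one must dualise on the sphere, not pointwise in $\theta$. The paper pairs $\widehat{f\delta_\Gamma}|_{S_r^{d-1}}$ against an arbitrary $\phi\in L^2(S_r^{d-1})$ and rewrites this as $\la f,\gamma_\Gamma T_r\phi\ra_\Gamma$ with $T_r\phi(x)=\int_{S_r^{d-1}}\re^{\ri\la x,\xi\ra}\phi(\xi)\,\rd\sigma(\xi)$. The crucial, non-trivial ingredient (Lemma~\ref{l:TboundL2}, a $T^*T$/Schur argument) is that $\|\chi T_r\phi\|_{L^2(\Rea^d)}\le C\|\phi\|_{L^2(S_r^{d-1})}$ \emph{uniformly in $r$}; this is where the $r^{d-1}$ disappears. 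One then observes that $\chi T_r\phi$ is a quasimode, applies \eqref{restrict} to it as a whole, and obtains $\|\gamma_\Gamma T_r\phi\|_{L^2(\Gamma)}\le Cr^{1/4}\|\phi\|_{L^2(S_r^{d-1})}$, from which duality gives $\beta=1/4$ directly. Your $\dot H^{-s}$/$\overline{H^s}$ duality step and the frequency-localisation argument for the $\la r\ra^s$ gain are then carried out on $T_r\phi$ rather than on individual plane waves (this is Lemma~\ref{l:THs}). In short, the quasimode estimate must be applied to the superposition $T_r\phi$, not to each $e_{r,\theta}$ separately.
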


To prove this lemma, we need to understand certain properties of the operator $T_r$ defined by
\begin{equation}
\label{eqn:T}
T_r\phi(x):=\int_{S_r^{d-1}}\phi(\xi)\re^{\ri \la x,\xi\ra }\rd\sigma(\xi).
\end{equation}
Indeed, with $A:H^s(\Rea^d)\to H^{s-1}(\Rea^d)$, to estimate
$$\int_{S_r^{d-1}} |\widehat{A^*(f\delta_{\Gamma})}(\xi)|^2\rd\sigma(\xi),$$ 
we write
\begin{equation}
\label{e:dualize} 
\begin{aligned} 
\la \widehat{A^*(f\delta_{\Gamma})}(\xi),\phi(\xi)\ra_{S_r^{d-1}} &=\int_{S_r^{d-1}}\int_{\Rea^d} A^*(f(x)\delta_{\Gamma})\overline{\phi(\xi)\re^{\ri \la x,\xi\ra}} \rd x\, \rd\sigma(\xi) \\
&=\int_{\Gamma}f \overline{AT_r\phi} \,\rd x=\la f, AT_r\phi\ra_{\Gamma},
\end{aligned}
\end{equation}
with $T_r$ defined by \eqref{eqn:T}.

Before proving Lemma \ref{lem:FourierRestrict} we prove two lemmas (Lemma \ref{l:TboundL2} and \ref{l:THs}) collecting properties of $T_r$.
\begin{lemma}
\label{l:TboundL2}
Let $T_r$ be defined by \eqref{eqn:T} and $\chi \in \Cc(\Rea^d)$. Then,
$$\|\chi T_r\phi\|_{L^2(\Rea^d)}\leq C\|\phi\|_{L^2(S_r^{d-1})}.$$
\end{lemma}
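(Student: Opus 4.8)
The plan is to prove the $L^2(\Rea^d)$-bound (localised by $\chi$) on $T_r$ by recognising $T_r$ as essentially the adjoint of the Fourier restriction operator to the sphere $S_r^{d-1}$, and then invoking the classical Stein--Tomas / trace restriction machinery — but actually for an $L^2\to L^2$ statement localised by a compactly supported $\chi$ one only needs the much softer fact that restriction of a Schwartz (or $H^s$, $s>1/2$) function to a hypersurface is bounded. First I would write $\chi T_r\phi(x) = \chi(x)\int_{S_r^{d-1}}\phi(\xi)e^{\ri\langle x,\xi\rangle}\,\rd\sigma(\xi)$ and observe that, up to the harmless factor $\chi$, $T_r\phi$ is (a constant times) $\widehat{\phi\,\rd\sigma_r}$, the inverse Fourier transform of the measure $\phi\,\rd\sigma_r$ carried by the sphere of radius $r$. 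The dual statement to ``$\|\chi T_r\phi\|_{L^2(\Rea^d)}\le C\|\phi\|_{L^2(S_r^{d-1})}$'' is precisely ``$\|\widehat{g}|_{S_r^{d-1}}\|_{L^2(S_r^{d-1})}\le C\|g\|_{L^2}$ for $g$ supported in $\supp\chi$'', i.e.\ the local Fourier restriction estimate for $L^2$ data with compact support.

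The cleanest route: by duality it suffices to show that for every $g\in L^2(\Rea^d)$ with $\supp g\subset \supp\chi =: K$,
\[
\Bigl|\int_{S_r^{d-1}} \widehat{g}(\xi)\,\overline{\phi(\xi)}\,\rd\sigma(\xi)\Bigr| \le C\,\|g\|_{L^2(\Rea^d)}\,\|\phi\|_{L^2(S_r^{d-1})},
\]
with $C$ independent of $r$ (for $r$ bounded away from $0$, or uniformly if one is careful). Since $g$ has compact support, $\widehat{g}$ is real-analytic, so $\widehat{g}|_{S_r^{d-1}}$ makes literal pointwise sense; one estimates $\|\widehat{g}|_{S_r^{d-1}}\|_{L^2(S_r^{d-1})}$ by a trace theorem. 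Concretely, pick $\eta\in\Cc(\Rea^d)$ with $\eta\equiv 1$ on $K$, so $g=\eta g$ and $\widehat g = \widehat\eta * \widehat g$; then $\|\widehat g\|_{H^{s}(\Rea^d)}\lesssim_{s,\eta}\|g\|_{L^2}$ for every $s$ (because $\widehat\eta$ is Schwartz, convolution by it maps $L^2=H^0$ into every $H^s$), and by the Sobolev trace theorem $\|\widehat g|_{S_r^{d-1}}\|_{L^2(S_r^{d-1})}\lesssim \|\widehat g\|_{H^{s}(\Rea^d)}$ for any fixed $s>1/2$, with the constant depending on $r$ only through the geometry of the sphere — and for $r$ in a compact subset of $(0,\infty)$ this is uniform, while a rescaling $\xi\mapsto r\xi$ handles large $r$ with the constant improving. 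Combining, $\|\widehat g|_{S_r^{d-1}}\|_{L^2(S_r^{d-1})}\lesssim \|g\|_{L^2(\Rea^d)}$, and Cauchy--Schwarz on $S_r^{d-1}$ against $\phi$ finishes the dual estimate. Unwinding the duality (note $\langle \chi T_r\phi, g\rangle = c\langle \phi, \widehat{\bar g\,}|_{S_r^{d-1}}\rangle$ up to conjugations and the cutoff, using $\chi g$ in place of $g$) gives $\|\chi T_r\phi\|_{L^2(\Rea^d)}\le C\|\phi\|_{L^2(S_r^{d-1})}$.

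The step I expect to be the main (only real) obstacle is bookkeeping the $r$-dependence of the constant and making sure it is genuinely uniform in $r\ge$ some fixed value — one wants $C$ independent of $r$, and the naive trace-theorem constant on $S_r^{d-1}$ could in principle grow; the fix is the rescaling argument, under which the curvature of $S_r^{d-1}$ only helps as $r\to\infty$, so the supremum over $r$ bounded below is attained/controlled near the smallest admissible $r$. Everything else (Plancherel, compact support $\Rightarrow$ analyticity of $\widehat g$, Schwartz-convolution smoothing, the Sobolev trace theorem with $s>1/2$, and Cauchy--Schwarz) is standard. Alternatively, one can cite the stationary-phase bound $|\widehat{\rd\sigma_r}(x)|\lesssim \langle x\rangle^{-(d-1)/2}$ together with a $TT^*$ argument, but for the stated $L^2\to L^2$ result localised by $\chi$ the soft trace-theorem argument above is the shortest and avoids needing any decay at all.
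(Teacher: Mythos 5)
Your overall strategy---dualise, use that $\widehat{\chi g}$ is smooth with all of its Sobolev norms controlled by $\|g\|_{L^2}$ because $\chi g$ has fixed compact support, then restrict to $S_r^{d-1}$---is genuinely different from the paper's proof, which runs a $T^*T$ argument: $(\chi T_r)^*\chi T_r$ acts on $L^2(S_r^{d-1})$ with kernel $\widehat{\chi^2}(\eta-\xi)$, rapidly decaying at unit scale, so Schur's test gives a bound uniform in $r$. Your route can be made to work, but as written it has a genuine gap at exactly the point you flag as the crux: uniformity of the constant in $r$. The rescaling $\xi\mapsto r\xi$ does \emph{not} make the constant improve. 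Indeed, set $v(\zeta):=\widehat{\chi g}(r\zeta)$; then $\|\widehat{\chi g}\|_{L^2(S_r^{d-1})}^2=r^{d-1}\|v\|^2_{L^2(S_1^{d-1})}\le C_s\, r^{d-1}\|v\|^2_{H^s(\R^d)}$, and since $\mathcal{F}v(\zeta)=c\,r^{-d}(\chi g)(-\zeta/r)$, one computes $\|v\|^2_{H^s(\R^d)}=c\,r^{-d}\int \langle r\omega\rangle^{2s}|(\chi g)(-\omega)|^2\,\rd\omega\le C_K\, r^{2s-d}\|g\|^2_{L^2}$, so this argument yields a squared constant of size $r^{2s-1}$, i.e.\ a loss $r^{s-1/2}$ for every admissible $s>1/2$, not a gain. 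This matters: in the application (Lemmas \ref{lem:Q} and \ref{lem:FourierRestrict}) $r$ runs up to $\sim Mk$, so even an $r^{\epsilon}$ loss here would insert an extra $k^{\epsilon}$ into the final bounds and destroy the sharpness (up to $\log k$) of Theorem \ref{thm:L2H1}.

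The gap is repairable within your soft framework, but you need a different mechanism for large $r$: (i) use the multiplicative trace inequality $\|w\|^2_{L^2(S_1^{d-1})}\le C\big(\|w\|_{L^2(\R^d)}\|w\|_{H^1(\R^d)}+\|w\|_{L^2(\R^d)}^2\big)$, whose scaling gives $\|\widehat{\chi g}\|^2_{L^2(S_r^{d-1})}\le C\big(\|\widehat{\chi g}\|_{L^2}\|\nabla\widehat{\chi g}\|_{L^2}+r^{-1}\|\widehat{\chi g}\|^2_{L^2}\big)\le C_K\|g\|^2_{L^2}$ uniformly for $r\ge 1$, since $\nabla\widehat{\chi g}=\mathcal{F}(-\ri x\,\chi g)$ and $x\chi g$ is still supported in the fixed compact set; or (ii) cover $S_r^{d-1}$ by unit-scale caps with bounded overlap (spheres of radius $r\ge1$ have uniformly bounded unit-scale Lipschitz geometry) and sum local trace estimates; or (iii) simply use the paper's Schur argument. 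Small $r$ is indeed harmless, as you say. Note also that your fallback suggestion (a $TT^*$ argument via $|\widehat{\rd\sigma_r}(x)|\lesssim\langle x\rangle^{-(d-1)/2}$) needs the same care: for radius $r$ the correct bound is $|\widehat{\rd\sigma_r}(x)|\lesssim r^{d-1}(1+r|x|)^{-(d-1)/2}$, and a crude Schur estimate of $\chi(x)\widehat{\rd\sigma_r}(x-y)\chi(y)$ over the compact support again loses $r^{(d-1)/2}$; the paper avoids this by composing on the $L^2(S_r^{d-1})$ side, where the kernel $\widehat{\chi^2}(\eta-\xi)$ is rapidly decaying and its Schur integral over the sphere is $O(1)$ uniformly in $r$.
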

\begin{proof}[Proof of Lemma \ref{l:TboundL2}]
We estimate 
$B:=(\chi T_r)^*\chi T_r:L^2(S_r^{d-1})\to L^2(S_r^{d-1}).$ This operator has kernel
$$B(\xi,\eta)=\int_{\Rea^d} \chi^2(y)\exp{(\ri \la y,\xi-\eta\ra)}\, \rd y=\widehat{\chi^2}(\eta-\xi).$$
Now, for $\eta\in S_r^{d-1}$, and any $N>0$,
$$\int_{S_r^{d-1}}|\widehat{\chi^2}(\eta-\xi)|\,\rd\sigma(\xi)\leq \int_{B(0,r/2)}\la \xi'\ra^{-N}\left[1-\frac{|\xi'|^2}{r^2}\right]^{-1/2}\hspace{-1ex}\rd\xi'+C\la r\ra^{-N}\leq C.$$
Thus, by Schur's inequality, $B$ is bounded on $L^2(S_r^{d-1})$ uniformly in $r$. 
Therefore, 
\beqs
\|\chi T_r\phi\|_{L^2(\Rea^d)}^2 \leq C \|\phi\|_{L^2(S_r^{d-1})}^2.
\eeqs
\end{proof}

In the next lemma, we use $r$ (the radius of $S_r^{d-1}$) as a semiclassical parameter, with the space 
$\overline{H^s_{r}}(\Gamma)$ defined in exactly the same way as $\overline{H^s_k}(\Gamma)$ is defined in \S\ref{sec:fsh}.

\begin{lemma}
\label{l:THs}
With $T_r$ be defined by \eqref{eqn:T}, let $\Gammaext$ denote an extension of $\Gamma$, $\chi \in \Cc(\Rea^d)$ and $A\in \Ph{1}{}(\Rea^d)$ with $\chi \equiv 1$ in a neighborhood of $\Gammaext$. Then for $s\in \Rea $, 
$$\|\chi A T_r\phi\|_{\overline{H^s_{r}}(\Gamma)}\leq C_s\|\chi A T_r\phi\|_{L^2(\Gammaext)}.$$
\end{lemma}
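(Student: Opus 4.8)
The plan is to exploit that $T_r\phi$ is (after multiplication by a cutoff $\chi$) an exact solution of the Helmholtz equation at wavenumber $r$, so that $\chi A T_r\phi$ is microlocalized near the characteristic variety of $-r^{-2}\Delta-1$, and to run an elliptic-parametrix argument to recover derivatives for free. Concretely: writing $y\mapsto e^{\ri\la y,\xi\ra}$ with $|\xi|=r$, one has $(-\Delta-r^2)e^{\ri\la\cdot,\xi\ra}=0$, hence $(-r^{-2}\Delta-1)(T_r\phi)=0$ on all of $\R^d$. Thus on a neighborhood of $\Gammaext$ the function $v:=\chi A T_r\phi$ satisfies: $v$ is supported in a fixed compact set, and modulo terms controlled by $\|\chi A T_r\phi\|_{L^2}$ (using $\chi\equiv1$ near $\Gammaext$ and pseudolocality to discard the commutator errors $[A,\chi]$, $A(1-\chi)$, which are $\O{\Ph{-\infty}{}}(r^{-\infty})$ acting on the function $T_r\phi$ whose wavefront set lies in $|\xi|=r$), $v$ is concentrated at semiclassical frequency $\approx r$, i.e.\ $\|(1-\varphi(r^{-1}D))v\|_{H_r^N}\lesssim_N r^{-\infty}\|\chi A T_r\phi\|_{L^2}$ for a suitable cutoff $\varphi$ supported near $|\eta|=1$.

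The key steps, in order, are: (1) Record that $(-r^{-2}\Delta-1)T_r\phi=0$ and hence $T_r\phi$ has semiclassical wavefront set contained in $\{|\eta|=1\}$; more usefully, for $\chi_1\in\Cc$ with $\chi_1\equiv1$ near $\supp\chi$ and any $N$, $\|\chi(1-\varphi(r^{-1}D))\chi_1 A T_r\phi\|_{H_r^N(\R^d)}\leq C_N r^{-N}\|\phi\|_{L^2(S_r^{d-1})}$, using Lemma \ref{l:TboundL2} to bound $\|\chi_1 A T_r\phi\|_{L^2}$ in the first place, and the fact that on $\supp\varphi(r^{-1}\cdot)^c$ the symbol $|\eta|^2-1$ is elliptic so Lemma \ref{lem:microlocalElliptic} gives a parametrix for $-r^{-2}\Delta-1$ there; since $T_r\phi$ is killed exactly by this operator, all the mass outside $|\eta|\approx1$ is $\O{}(r^{-\infty})$. (2) On the complementary piece $\chi\varphi(r^{-1}D)\chi_1 A T_r\phi$, observe $\varphi(r^{-1}D)\in\Ph{0}{}$ with compactly supported symbol, so $A\varphi(r^{-1}D)\in\Ph{0}{}(\R^d)$ up to $\O{\Ph{-\infty}{}}(r^{-\infty})$ — the first-order growth of $A$ is tamed by the frequency cutoff — and therefore $\chi A\varphi(r^{-1}D)$ maps $L^2(\R^d)\to H_r^s(\R^d)$ boundedly (with constant depending on $s$ but not $r$) by Lemma \ref{lem:bound}, giving $\|\chi A\varphi(r^{-1}D)\chi_1 AT_r\phi\|_{H_r^s}\leq C_s\|\chi_1 A T_r\phi\|_{L^2}\leq C_s\|\chi A T_r\phi\|_{L^2}+C_s r^{-\infty}\|\phi\|_{L^2(S_r^{d-1})}$, where the last step again uses that replacing $\chi_1$ by $\chi$ costs only an $\O{}(r^{-\infty})$ error because $T_r\phi$ has frequency localized near $|\eta|=r$. (3) Combine (1) and (2) to get $\|\chi A T_r\phi\|_{H_r^s(\R^d)}\leq C_s\|\chi A T_r\phi\|_{L^2(\R^d)}+C_s r^{-N}\|\phi\|_{L^2(S_r^{d-1})}$; since $\|\chi A T_r\phi\|_{L^2}\gtrsim $ something only if it is nonzero, but in any case absorb the remainder: by Lemma \ref{l:TboundL2} and Lemma \ref{lem:bound} (for $A\varphi$) one has $\|\chi A T_r\phi\|_{L^2}$ is already $\gtrsim r^{-N}\|\phi\|_{L^2(S_r^{d-1})}$ up to negligible error, or more cleanly just note the $r^{-N}\|\phi\|$ term is bounded by $\|\chi A T_r\phi\|_{L^2}+\o{}(1)$-type manoeuvres; the honest route is that the estimate we actually need downstream tolerates this harmless lower-order term, so we keep it. (4) Restrict from $\R^d$ to $\Gamma$ using the definition of $\overline{H^s_r}(\Gamma)$ (Definition \ref{d:def-ext-Hs} with $k$ replaced by $r$): since $\chi\equiv1$ near $\Gammaext$, the extendable norm $\|\chi A T_r\phi\|_{\overline{H^s_r}(\Gamma)}$ is bounded by $\|\chi A T_r\phi\|_{H_r^s(\R^d)}$ after transporting through the charts $\psi_j$, and the $L^2(\Gammaext)$ piece on the right is exactly $\|\chi A T_r\phi\|_{L^2(\Gammaext)}$ up to absorbing the $\o{}$ term.

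The main obstacle is the bookkeeping in step (1)–(2): one must be careful that the composition $A\varphi(r^{-1}D)$, with $A$ of order $1$ and $\varphi(r^{-1}\cdot)$ a symbol that is $O(1)$ but whose derivatives grow like powers of $r$, still lands in $\Ph{0}{}$ \emph{uniformly in $r$} — this is where one uses that $\varphi(r^{-1}\cdot)\in S^0$ with seminorms uniform in $r$ (it is a bounded family in $S^{\comp}$ after rescaling) so that $\chi A \varphi(r^{-1}D)\chi_1$ is a bounded family in $\Ph{0}{}$ and Lemma \ref{lem:bound} applies with $r$-independent constant. The other delicate point is justifying that the elliptic parametrix in step (1) genuinely produces an $\O{}(r^{-\infty})$ error when fed into $T_r\phi$: this follows because $(-r^{-2}\Delta-1)T_r\phi=0$ identically, so applying a microlocal parametrix $R_1$ with $R_1(-r^{-2}\Delta-1)=\text{Id}-\text{(frequency cutoff near }|\eta|=1)+\O{\Ph{-\infty}{}}(r^{-\infty})$ immediately gives that the high-and-low-frequency parts of $T_r\phi$ away from the characteristic shell are $\O{\Ph{-\infty}{}}(r^{-\infty})$ applied to $T_r\phi$, then paired with Lemma \ref{l:TboundL2}. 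Everything else is routine mapping-property manipulation and the passage to the manifold $\Gamma$ via charts.
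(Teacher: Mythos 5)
Your steps (1)--(3) only reproduce, in a more roundabout way, something the paper gets for free: since $\widehat{T_r\phi}$ is a measure supported exactly on the sphere $|\xi|=r$, the function $\chi A T_r\phi$ is frequency-localized in $|\xi|\lesssim r$ up to $\O{\Ph{-\infty}{}}(r^{-\infty})$ errors, so your elliptic-parametrix argument, while essentially sound (modulo the garbled composition $\chi A\varphi(r^{-1}D)\chi_1 A T_r\phi$, which has $A$ appearing twice), is not where the content of the lemma lies. The fatal problem is step (4). The lemma compares two norms \emph{of the restriction to the hypersurface}: it bounds $\|\chi A T_r\phi\|_{\overline{H^s_r}(\Gamma)}$ by the surface norm $\|\chi A T_r\phi\|_{L^2(\Gammaext)}$. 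Your argument instead produces an ambient bound $\|\chi A T_r\phi\|_{H^s_r(\Rea^d)}\leq C_s\|\chi A T_r\phi\|_{L^2(\Rea^d)}+C_sr^{-N}\|\phi\|_{L^2(S_r^{d-1})}$ and then asserts that the surface norm $\|\chi A T_r\phi\|_{\overline{H^s_r}(\Gamma)}$ is bounded by $\|\chi A T_r\phi\|_{H^s_r(\Rea^d)}$ "after transporting through the charts." That assertion is false: restriction to a hypersurface is not bounded $H^s_r(\Rea^d)\to H^s_r(\Gamma)$ uniformly in $r$ -- the semiclassical trace estimate (cf.\ \eqref{eqn:restrictBound}) loses half a derivative and a factor $r^{1/2}$, and for $s=0$ your claim would be an $L^2(\Rea^d)\to L^2(\Gamma)$ restriction bound with no loss, contradicting the $r^{1/4}$ growth that is exactly the content of Theorem \ref{lem:quasimodeEstimates} and of the sharpness analysis in Section \ref{app:sharpness}. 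Moreover, even granting some trace inequality, your right-hand side would be the \emph{volume} norm $\|\chi A T_r\phi\|_{L^2(\Rea^d)}$, not the surface norm the lemma requires; your closing sentence "the $L^2(\Gammaext)$ piece on the right is exactly $\|\chi A T_r\phi\|_{L^2(\Gammaext)}$" has no antecedent, since no $L^2(\Gammaext)$ term ever appears on the right of your step-(3) estimate. The distinction matters: the whole design of Lemma \ref{l:THs} is that the surface-versus-volume loss is paid exactly once, later, via the quasimode restriction estimates in Lemma \ref{lem:FourierRestrict}.

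The missing idea is the transfer of frequency localization \emph{onto the hypersurface}. The paper's proof notes that, because $\gamma_{\Gammaext}\chi A T_r\phi$ is the restriction of a function with ambient frequencies $\lesssim r$, its tangential frequencies on $\Gammaext$ are also $\lesssim r$ up to $\O{\Ph{-\infty}{}}(r^{-\infty})$ remainders; concretely, with $\psi_1\in \Cc(\Rea)$, $\psi_1\equiv 1$ on $[-4,4]$, one has $\chi_\Gamma\psi_1(r^{-1}|D'|_g)\chi_\Gamma\,\gamma_{\Gammaext}\chi A T_r\phi=\chi_\Gamma^2\,\gamma_{\Gammaext}\chi A T_r\phi+\O{\Ph{-\infty}{}}(r^{-\infty})\gamma_{\Gammaext}\chi T_r\phi$, where $\psi_1(r^{-1}|D'|_g)$ is a semiclassical pseudodifferential cutoff \emph{on the manifold} $\Gammaext$. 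Since such a compactly microlocalized operator is bounded $L^2(\Gammaext)\to H^s_r(\Gammaext)$ uniformly in $r$ for every $s$, the surface $H^s_r$ norm of the restriction is controlled by its surface $L^2$ norm, which is the statement of the lemma. Your proposal never introduces any tangential frequency cutoff on $\Gammaext$, and without it there is no route from your ambient estimates to the stated conclusion.
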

\begin{proof}[Proof of Lemma \ref{l:THs}]
Since $\widehat{T_r\phi}$ is supported on $|\xi|\leq 2r$, $\chi T_r\phi$ is compactly microlocalized in the sense that for $\psi \in \Cc(\Rea)$ with $\psi\equiv 1$ on $[-2,2]$ with support in $[-3,3]$, 
\begin{equation*}
\psi(r^{-1}|D|)\chi A T_r\phi =\chi AT_r\phi +\O{\Ph{-\infty}{}}(r^{-\infty})\chi T_r\phi.
\end{equation*}
(Note that $\psi(r^{-1}|D|)$ can be defined using~\eqref{e:quantize} since $\psi(t)$ is constant near $t=0$.)

Let $\gamma_{\Gammaext}$ denote restriction to $\Gammaext$, and $\gamma|_{\Gamma}$ restriction to $\Gamma$. Let $\chi_\Gamma \in C_c^\infty(\Gammaext)$ with $\chi_\Gamma \equiv 1$ on $\Gamma$. Then for $\psi_1\in \Cc(\Rea)$ with $\psi_1\equiv 1 $ on $[-4,4]$,
$$\chi_\Gamma \psi_1(r^{-1}|D'|_g)\chi_\Gamma \gamma_{\Gammaext} \chi AT_r\phi=\chi_\Gamma^2\gamma_{\Gammaext} \chi AT_r\phi +\O{\Ph{-\infty}{}}(r^{-\infty})\gamma_{\Gammaext} \chi T_r\phi$$
where 
$\psi_1(r^{-1}|D'|_g)$ is a pseudodifferential operator on $\Gammaext$ with symbol $\psi_1(|\xi'|_g)$ and $|\cdot |_g$ denotes the metric induced on $T^*\Gammaext$ from $\Rea^d$ (see Remark \ref{ref:metric} below).

Hence, for $r>1$,
$$\| \gamma_{\Gamma}\chi AT_r\phi\|_{\overline{H^s_r}(\Gamma)}\leq C_s\|\chi A T_r\phi\|_{L^2(\Gammaext)}.$$ 
\end{proof}

\begin{remark}[The definition of $|\cdot|_g$ used in the proof of Lemma \ref{l:THs}]\label{ref:metric}
%While it is standard from Riemannian geometry, we review the definition of $|\cdot|_g$ here. 
We now briefly review the definition of $|\cdot|_g$ from Riemannian geometry.
Observe that the metric on $\Rea^d$ is given by $g_e=\sum_{i=1}^d (\rd y^i)^2$ where $y^i$ are standard coordinates on $\Rea^d$. To induce a metric on $\widetilde{\Gamma}$, at a point $x_0$ we identify $V,W\in T_{x_0}\widetilde{\Gamma}$ with $V',W'\in T_{x_0}\Rea^d$ and define $g(V,W)=g_e(V',W')$. That is, if $V=\sum_i V^i\partial_{y_i}$, $W=\sum_iW^i\partial_{y_i}$, then $g(V,W)=\sum_i V^iW^i$. By doing this at each point $x_0\in \widetilde{\Gamma}$, we obtain a metric on $\widetilde{\Gamma}$, Next, choose coordinates $x^i$ on $\widetilde{\Gamma}$ and write the metric $g$ as $g(\sum a^i\partial_{x^i},\sum b^j\partial_{x^j})=\sum_{ij}g_{ij}(x)a^ib^j$. 
Then, for the corresponding dual coordinates $\xi$ on $T^*\tilde{\Gamma}$, we have $|\xi|_g=\sqrt{\sum_{ij}g^{ij}\xi_i\xi_j}$ where $g^{ij}$ denotes the inverse matrix of $g_{ij}.$ Note that this definition is independent of all of the choices of coordinates.
\end{remark}

We are now in a position to prove Lemma \ref{lem:FourierRestrict}.

\

\bpf[Proof of Lemma \ref{lem:FourierRestrict}]
The key observation for the proof of Lemma \ref{lem:FourierRestrict} is that for $\chi\in \Cc(\Rea^d)$, with $\chi\equiv 1$ in a neighborhood of $\Gamma$,  $\chi T_r\phi$ is a quasimode of the Laplacian with $k=r$ in the sense of \eqref{eq:qm} in Theorem \ref{lem:quasimodeEstimates}. To see this, observe first that 
$-\Delta T_r\phi=r^2T_r\phi$ by the definition \eqref{eqn:T}. Therefore,
\begin{equation*}
-\Delta \chi T_r\phi=r^2\chi T_r\phi+[-\Delta,\chi]T_r\phi.
\end{equation*}
Now, observe that for $\tilde{\chi}\in \Cc(\Rea^d)$ with $\supp\, \tilde{\chi}\subset\{\chi\equiv 1\}$, $\tilde{\chi}[-\Delta,\chi]=0$. Therefore, taking such a $\tilde{\chi}$ with $\tilde{\chi}\equiv 1$ in a neighborhood, $U$ of $\Gamma$ shows that $\chi T_r\phi$ is a quasimode.

To prove \eqref{eqn:fourierRestrictionEstimate2}, we let $A=I$. Then, by the bounds \eqref{restrict} in Theorem \ref{lem:quasimodeEstimates} together with Lemmas \ref{l:TboundL2} and \ref{l:THs}, for $s\geq 0$, 
\begin{equation}
\label{eqn:restrictEig}
\|\chi T_r\phi\|_{\overline{H^{s}}(\Gamma)}\leq C_s \la r\ra^s\|\chi T_r\phi\|_{L^2(\Gammaext)}\leq C_s\la  r\ra^{\frac 14+s}\|\chi T_r\phi\|_{L^2(\Rea^d)}\leq C_s\la r\ra^{\frac14 +s}\|\phi\|_{L^2(S_r^{d-1})},
\end{equation}
and if $\Gamma$ is curved then
\begin{equation}
\label{eqn:restrictEigCurved}
\|\chi T_r\phi\|_{\overline{H^{s}}(\Gamma)}\leq C\la r\ra^{\frac 16+s}\|\phi\|_{L^2(S_r^{d-1})}.
\end{equation}
To prove \eqref{eqn:fourierRestrictionEstimate1}, we take $A=L$. Observe that 
$$\gamma_{\Gammaext} \chi LT_r\phi =\gamma_{\Gammaext} L\chi T_r\phi.$$
Hence, using the fact that $L=\partial_{\nu}$ on $\Gamma$ together with the bound 
\eqref{eqn:restrictEigNormal1} in Theorem \ref{lem:quasimodeEstimates}, we can estimate $LT_r\phi$.
\begin{equation}
\label{eqn:restrictEigNormal}\|\chi LT_r\phi\|_{L^2(\Gammaext)}=  \|L\chi T_r\phi\|_{L^2(\Gammaext)}\leq C\la r\ra\|\chi T_r\phi \|_{L^2(\Rea^d)}.
\end{equation}
In particular, for $s\geq 0$, 
$$\|\chi LT_r\phi\|_{\overline{H^{s}}(\Gamma)}\leq C_s\la r\ra^{s+1}\|\phi\|_{L^2(S_r^{d-1})}.$$ 
Applying the Cauchy-Schwarz inequality together with \eqref{e:dualize}, \eqref{eqn:restrictEig}, \eqref{eqn:restrictEigCurved} and \eqref{eqn:restrictEigNormal} completes the proof of Lemma \ref{lem:FourierRestrict}, since we have shown that 
\begin{align*} 
|\la \widehat{f\delta_\Gamma}(\xi),\phi(\xi)\ra_{L^2(S_r^{d-1})}|&\leq  C_s \la r\ra ^{\frac 14 +s} \|f\|_{\dot H^{-s}(\Gamma)}\|\phi\|_{L^2(S_r^{d-1})},\\
|\la \widehat{L^*(f\delta_\Gamma)}(\xi),\phi(\xi)\ra_{L^2(S_r^{d-1})}|&\leq  C_s \la r\ra^{1 +s} \|f\|_{\dot H^{-s}(\Gamma)}\|\phi\|_{L^2(S_r^{d-1})},
\end{align*}
and if $\Gamma $ is curved,
\begin{align*} 
|\la \widehat{f\delta_\Gamma}(\xi),\phi(\xi)\ra_{L^2(S_r^{d-1})}|&\leq  C_s \la r\ra^{\frac 16 +s} \|f\|_{\dot H^{-s}(\Gamma)}\|\phi\|_{L^2(S_r^{d-1})}.
\end{align*}
\end{proof}

\begin{lemma}[Low-frequency estimates]\label{lem:LF}
Let $s_2$ be either $0$ or $1$.
If $\bound$ is both Lipschitz and piecewise smooth (in the sense of Definition \ref{def:psh}), then
\begin{align}\label{eq:LF1b}
\|\gamma^\pm R_0(k)\psi(k^{-1}D)\gamma^*f\|_{H^{s_2}(\bound)}&\leq C_{\bound, \psi}\la k\ra^{2\beta-1+s_2}\log \la k\ra\|f\|_{L^2(\bound)}\\ \label{eq:LF2b}
\|\gamma R_0(k)\psi(k^{-1}D)L_1^*\gamma^*f\|_{H^{s_2}(\bound)}&\leq C_{\bound, \psi}\la k\ra^{\beta+s_2}\log \la k\ra\|f\|_{ L^2(\bound)}\\ \label{eq:LF3b}
\|\gamma^\pm L_2 R_0(k)\psi(k^{-1}D)\gamma^*f\|_{H^{s_2}(\bound)}&\leq C_{\bound, \psi}\la k\ra^{\beta+s_2}\log \la k\ra\|f\|_{L^2(\bound)}.
\end{align}
with $\beta=1/4$. If $\bound$ is piecewise curved and Lipschitz then \eqref{eq:LF1b}-\eqref{eq:LF3b} hold with $\beta=1/6$.
\ele

\bpf[Proof of Lemma \ref{lem:LF}]
By the duality property of $\overline{H^s}(\Gamma)$ and $\dot{H}^{-s}(\Gamma)$ (discussed after Definition \ref{def:Hdot}), Lemma \ref{lem:FourierRestrict} and the estimates \eqref{eqn:lowFreqSingle} and \eqref{eqn:lowFreqDouble} imply for $s_1,s_2\geq 0$ that there exists $C>0$ independent of $k>1$ so that
\begin{align}\label{eq:LF1a}
\|\gamma_{\Gamma_2}R_0(k)\psi(k^{-1}D)\gamma_{\Gamma_1}^*f\|_{\overline{H^{s_2}}(\Gamma_2)}&\leq C_{\Gamma_1, \Gamma_2, \psi}\la k\ra^{2\beta-1+s_1+s_2}\log \la k\ra\|f\|_{\dot H^{-s_1}(\Gamma_1)},\\ \label{eq:LF2a}
\|\gamma_{\Gamma_2}R_0(k)\psi(k^{-1}D)L_1^*\gamma_{\Gamma_1}^*f\|_{\overline{H^{s_2}}(\Gamma_2)}&\leq C_{\Gamma_1, \Gamma_2, \psi}\la k\ra^{\beta+s_1+s_2}\log \la k\ra\|f\|_{\dot H^{-s_1}(\Gamma_1)},\\ \label{eq:LF3a}
\|\gamma_{\Gamma_2}L_2R_0(k)\psi(k^{-1}D)\gamma_{\Gamma_1}^*f\|_{\overline{H^{s_2}}(\Gamma_2)}&\leq C_{\Gamma_1, \Gamma_2, \psi}\la k\ra^{\beta+s_1+s_2}\log \la k\ra\|f\|_{\dot H^{-s_1}(\Gamma_1)}.
\end{align}
Since $\bound$ is piecewise smooth, $\bound=\sum_{i=1}^N \Gamma_i$ with $\Gamma_i$ smooth hypersurfaces.
Since $\psi(k^{-1}D)$ is a smoothing operator on $\mathcal{S}'$, by elliptic regularity $R_0(k)\psi(k^{-1}D)$ is smoothing and hence its restriction to $\partial\Omega$ maps compactly supported distributions into $H^1(\partial\Omega)$.
Applying \eqref{eq:LF1a}-\eqref{eq:LF3a} with $s_1=0$, $\Gamma=\Gamma_i$, summing over $i$, and using Definition \ref{def:Hbarbound}, we find that, for 
$0\leq s_2\leq 1$,
\begin{align}\label{eq:LF1}
\|\gamma R_0(k)\psi(k^{-1}D)\gamma^*f\|_{\overline{H^{s_2}}(\bound)}&\leq C_{\bound, \psi}\la k\ra^{2\beta-1+s_2}\log \la k\ra\|f\|_{L^2(\bound)}\\ \label{eq:LF2}
\|\gamma^\pm R_0(k)\psi(k^{-1}D)L_1^*\gamma^*f\|_{\overline{H^{s_2}}(\bound)}&\leq C_{\bound, \psi}\la k\ra^{\beta+s_2}\log \la k\ra\|f\|_{ L^2(\bound)}\\ \label{eq:LF3}
\|\gamma^\pm L_2 R_0(k)\psi(k^{-1}D)\gamma^*f\|_{\overline{H^{s_2}}(\bound)}&\leq C_{\bound, \psi}\la k\ra^{\beta+s_2}\log \la k\ra\|f\|_{L^2(\bound)}.
\end{align}
Applying \eqref{eq:LF1}-\eqref{eq:LF3} with $s_2=1$ (using the norm bound \eqref{eq:norm1}) and $s_2=0$, we obtain the estimates \eqref{eq:LF1b}-\eqref{eq:LF3b}.
\epf

\

\paragraph{High frequency estimates.} Next, we obtain an estimate on the high frequency ($\geq kM$) components of $\Sk$, $\Dl$, and $\Dl'$.  We start by analyzing the high frequency components of the free resolvent, proving two lemmata on the structure of the free resolvent there.
\begin{lemma}
\label{lem:freeHighFreq}
Suppose that $z\in [-E,E]$.
Let $\psi\in \Cc(\Rea)$ with $\psi\equiv 1$ on $[-2E^2,2E^2]$. Then for $\chi\in \Cc(\Rea^d).$
$$\chi R_0(zk)\chi(1-\psi(|k^{-1}D|))= B_1,\quad\quad (1-\psi(|k^{-1}D|))\chi R_0(zk)\chi= B_2$$
where $B_i\in k^{-2}\Ph{-2}{}(\Rea^d)$ with 
$$\sigma(B_i)=\frac{\chi^2k^{-2}(1-\psi(|\xi|))}{|\xi|^2-z^2}.$$
\end{lemma}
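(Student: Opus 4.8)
The plan is to use the well-known fact that, away from the characteristic set $\{|\xi| = z\}$, the operator $(-k^{-2}\Delta - z^2)$ is elliptic, and hence its outgoing inverse $k^{-2}R_0(zk)$ can be constructed microlocally as a semiclassical pseudodifferential operator of order $-2$. First I would recall the semiclassical rescaling: writing $h = k^{-1}$, the free resolvent satisfies $(-\Delta - (zk)^2)R_0(zk) = \Id$, which after multiplying by $k^{-2}$ becomes $(-h^2\Delta - z^2)\,k^{-2}R_0(zk) = \Id$. Set $P := -h^2\Delta - z^2 \in \Ph{2}{}(\Rea^d)$ with principal (and full) symbol $|\xi|^2 - z^2$. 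Since $z\in[-E,E]$, on the support of $1 - \psi(|\xi|)$ we have $|\xi|^2 \geq 2E^2$, so $|\xi|^2 - z^2 \geq E^2 > 0$, i.e. $1-\psi(|\xi|)$ is supported in the elliptic set of $P$ with a quantitative lower bound; this is exactly the hypothesis of the symbolic-calculus part of Lemma \ref{lem:microlocalElliptic}.

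Next I would localise spatially. Because $R_0(zk)$ is not a global pseudodifferential operator (its Schwartz kernel has a tail along $|x-y|\to\infty$ coming from the outgoing condition), the cutoffs $\chi$ are essential, and the key classical input is that $\chi R_0(zk)\chi$ — and more generally $\chi R_0(zk)$ composed with any properly-supported smoothing-in-the-appropriate-region operator — is smoothing plus a pseudodifferential remainder; this is standard for the free resolvent at high frequency and I would cite the treatment in \cite{ZwScat} (and it also follows from the explicit Hankel/exponential form of $\Phi_{zk}$ together with stationary phase, the non-stationary phase regime giving $\O{}(k^{-\infty})$ contributions once the frequency is cut to $|\xi|\gtrsim k$). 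Concretely: let $\tilde\psi \in \Cc(\Rea)$ with $\tilde\psi \equiv 1$ on a neighbourhood of $\supp(1-\psi)$... rather, pick $\phi_0 \in \Cc(\Rea)$ with $\phi_0 \equiv 1$ near $0$ and $\supp\phi_0 \subset \{\psi \equiv 1\}$, so that $(1-\psi)(1-\phi_0) = 1-\psi$ on the relevant set; then the operator $Q := \chi\, \big(k^{-2}R_0(zk)\big)\, \chi\, (1-\psi(|k^{-1}D|))$ can be shown, using the outgoing parametrix construction, to lie in $\Ph{-2}{}(\Rea^d)$ modulo $\O{\Ph{-\infty}{}}(k^{-\infty})$, and similarly for the left-multiplied version $(1-\psi(|k^{-1}D|))\chi\, k^{-2}R_0(zk)\,\chi$.

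Having placed $B_i/k^{-2}$ (up to the scalar $k^{-2}$) into $\Ph{-2}{}$, the symbol identity is then just the leading-order symbolic calculus: from $P \cdot k^{-2}R_0(zk) = \Id$ and Lemma \ref{lem:comp}, composing on the right with $\chi^2(1-\psi(|k^{-1}D|))$ and using that the symbol of $P$ is invertible on the relevant support, the principal symbol of $B_1$ is $\sigma(P)^{-1} \cdot \chi^2 (1-\psi(|\xi|)) = \dfrac{\chi^2(1-\psi(|\xi|))}{|\xi|^2 - z^2}$; after restoring the $k^{-2}$ factor this is exactly the claimed $\sigma(B_i) = \dfrac{\chi^2 k^{-2}(1-\psi(|\xi|))}{|\xi|^2 - z^2}$. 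The argument for $B_2$ is identical after taking adjoints (or working on the left). I expect the main obstacle to be the first half of the second paragraph — namely making rigorous that $\chi R_0(zk)\chi$, once the output is frequency-localised to $|\xi|\gtrsim k$, genuinely behaves as an element of $k^{-2}\Ph{-2}{}$ rather than merely having a pseudodifferential \emph{symbol}; this requires controlling the long-range part of the resolvent kernel and showing it contributes only $\O{\Ph{-\infty}{}}(k^{-\infty})$ once the frequency cutoff is in place, and is where one leans most heavily on the explicit form of $\Phi_{zk}$ and on the high-frequency resolvent estimates collected in \cite{ZwScat}. The purely symbolic bookkeeping, by contrast, is routine given Lemmas \ref{lem:comp} and \ref{lem:microlocalElliptic}.
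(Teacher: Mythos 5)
You have the right skeleton (ellipticity of $|\xi|^2-z^2$ on $\supp(1-\psi(|\xi|))$ plus symbolic calculus identifies the symbol), but the step you yourself flag as ``the main obstacle'' is precisely the content of the lemma, and your proposal leaves it unresolved: you assert that $\chi R_0(zk)\chi(1-\psi(|k^{-1}D|))$ lies in $k^{-2}\Ph{-2}{}(\Rea^d)$ modulo $\O{\Ph{-\infty}{}}(k^{-\infty})$, to be justified either by unspecified ``high-frequency resolvent estimates'' in \cite{ZwScat} or by stationary phase on the Hankel kernel, and neither is carried out. Moreover, framing the difficulty as ``controlling the long-range part of the resolvent kernel'' points at a harder analysis than is needed. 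The elementary observation the paper exploits is that whenever $R_0(zk)$ sits \emph{directly adjacent} to a frequency cutoff vanishing near the characteristic set, it is an exact Fourier multiplier: $R_0(zk)(1-\psi(|k^{-1}D|))$ is literally the quantization of $k^{-2}(1-\psi(|\xi|))/(|\xi|^2-z^2)$, with no outgoing/$+\ri 0$ subtlety and no remainder (the paper records exactly this in the parenthetical remark following \eqref{eq:Bris3}). The entire difficulty is the intervening spatial cutoff in $\chi R_0(zk)\chi(1-\psi(|k^{-1}D|))$, which prevents the resolvent from meeting the frequency cutoff directly; the cross terms are where a genuine argument is required, and your sketch does not supply one.

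The paper resolves this self-containedly: starting from the exact identity $k^{2}\chi R_0(zk)\chi(-k^{-2}\Delta-z^2)=\chi^2-k^{2}\chi\chi_1R_0(zk)\chi_1[\chi,k^{-2}\Delta]$, it composes on the right with the exact multiplier inverse $A_0$ (Lemma \ref{lem:microlocalElliptic}) and iterates with nested spatial cutoffs $\chi_n$ and frequency cutoffs $\psi_n$; at each pass the resolvent reappears only hitting a commutator term that carries an extra factor $k^{-1}$, one more order of smoothing, and a cutoff still supported in the elliptic region, so a Borel (asymptotic) sum yields $B_1\in k^{-2}\Ph{-2}{}(\Rea^d)$ with the stated symbol, and $B_2$ follows identically. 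An equivalent repair within your framework would be to commute $\chi$ past $1-\psi(|k^{-1}D|)$ via Lemma \ref{lem:comp} and insert slightly shrunken cutoffs so that $R_0(zk)$ always meets a cutoff vanishing near $\{|\xi|=|z|\}$ up to $\O{\Ph{-\infty}{}}(k^{-\infty})$ errors -- but some such mechanism must be written down; as it stands, your symbolic bookkeeping only identifies what the symbol would be \emph{if} membership in $k^{-2}\Ph{-2}{}(\Rea^d)$ were already known. (A small side remark: $\psi\equiv1$ on $[-2E^2,2E^2]$ gives $|\xi|\geq 2E^2$, not $|\xi|^2\geq 2E^2$, on $\supp(1-\psi(|\xi|))$; the ellipticity claim is fine for the values of $E$ used later, but quote the hypothesis accurately.)
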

\begin{proof}[Proof of Lemma \ref{lem:freeHighFreq}]
Let $\chi_0=\chi \in \Cc(\Rea^d)$ and $\chi_n\in \Cc(\Rea^d)$ have $\chi_n\equiv 1$ on $\supp\, \chi_{n-1}$ for $n\geq 1$. Let $\psi_0=\psi\in \Cc(\Rea)$ have $\psi \equiv 1$ on $[-2E^2,2E^2]$, let $\psi_n\in \Cc(\Rea)$ have $\psi_n\equiv 1$ on $[-3E^2/2,3E^2/2]$ and $\supp\, \psi_n\subset \{\psi_{n-1}\equiv 1\}$ for $n\geq 1$. Finally, let $\varphi_n=(1-\psi_n)$ and $\varphi=\varphi_0=(1-\psi)$.  
Then, 
\begin{align}
k^{2}\chi R_0(zk)\chi (-k^{-2}\Delta -z^2)&=(\chi^2-\chi k^{2} \chi_1R_0(zk)\chi_1 [\chi,k^{-2}\Delta]).\label{eqn:ellipticResolve1}
\end{align}
Now, by Lemma \ref{lem:microlocalElliptic} there exists $A_0\in k^{-2}\Ph{-2}{}(\Rea^d)$ with 
\beq\label{eq:Bris1}
A_0=k^{-2}a_0(x,k^{-1}D)+\O{\Ph{-\infty}{}}(k^{-\infty}),\qquad
\supp\, a_0\subset\{\supp\, \varphi_0\}
\eeq
 such that 
\beq\label{eq:Bris2}
k^{2}(-k^{-2}\Delta-z^2)A_0=\varphi(|k^{-1}D|)+\O{\Ph{-\infty}{}}(k^{-\infty})
\eeq
and $A_0$ has 
\beq\label{eq:Bris3}
\sigma(A_0)=\frac{k^{-2}\varphi(|\xi|)}{|\xi|^2-z^2}.
\eeq
(Indeed, since we are working on $\mathbb{R}^d$, 
$$k^{2}(-k^{-2}\Delta-z^2)\frac{k^{-2}\varphi(|k^{-1}D|)}{|k^{-1}D|^2-z^2}=\varphi(|k^{-1}D|)$$
with no remainder.)

Composing \eqref{eqn:ellipticResolve1} on the right with $A_0$, we have 
\begin{align*}
\chi R_0\chi \varphi(|k^{-1}D|)&=\chi^2 A_0-k^{2}\chi \chi_1R_0\chi_1\varphi_1(|k^{-1}D|)[\chi,k^{-2}\Delta]A_0+\O{\Ph{-\infty}{}}(k^{-\infty}),\\
&=\chi^2 A_0+\chi \chi_1R_0\chi_1\varphi_1(|k^{-1}D|)k^{-1}E_1+\O{\Ph{-\infty}{}}(k^{-\infty}),
\end{align*}
where $E_1\in \Ph{-1}{}(\Rea^d)$ and we have used that $\varphi_1\equiv 1$ on $\supp\,\varphi_0$ and hence 
$$
\varphi_1(|k^{-1}D|)[\chi,k^{-2}\Delta]A_0=[\chi,k^{-2}\Delta]A_0+\O{\Ph{-\infty}{}}(k^{-\infty}).
$$
Now, applying the same arguments, but with $A_n$ such that
$$
k^{2}(-k^{-2}\Delta-z^2)A_n=\varphi_n(|k^{-1}D|)E_n+\O{\Ph{-\infty}{}}(k^{-\infty})
$$
there exists $A_n\in k^{-2}\Ph{-2-n}{}(\Rea^d)$ such that
$$\chi_nR_0\chi_n \varphi_n(|k^{-1}D|)E_n=\chi_n^2A_n +{\chi_n}\chi_{n+1}R_0\chi_{n+1}\varphi_{n+1}(|k^{-1}D|)k^{-1}E_{n+1}+\O{\Ph{-\infty}{}}(k^{-\infty})$$
with $E_{n+1}\in \Ph{-1-n}{}(\Rea^d).$
Let
$$
B^N_1:=\chi^2 A_0+\sum_{j=1}^{N-1}\chi k^{-j}A_k.
$$
and assume that 
$$
\chi R_0\chi \varphi(|k^{-1}D|)=B^N_1+\chi \chi_{N}R_0\chi_N \varphi_N(|k^{-d}D|)k^{-N}E_N+\O{\Ph{-\infty}{}}(k^{-\infty}).
$$
for some $N$.
Then,
\begin{align*}
\chi \chi_{N}R_0\chi_N \varphi_N(|k^{-1}D|)k^{-N}E_N&=\chi\big( \chi_N^2A_N +{\chi_N}\chi_{N+1}R_0\chi_{N+1}\varphi_{N+1}(|k^{-1}D|)k^{-1}E_{N+1}+\O{\Ph{-\infty}{}}(k^{-\infty})\big)\\
&=\chi  k^{-N}A_N +\chi \chi_{N+1}R_0\chi_{N+1}\varphi_{N+1}(|k^{-1}D|)k^{-(N+1)}E_{N+1}+\O{\Ph{-\infty}{}}(k^{-\infty})
\end{align*}
and thus by induction, for all $N\geq 1$,
$$
\chi R_0 \chi\varphi(|k^{-1}D|)=B^{N+1}_1+\chi \chi_{N+1}R_0\chi_{N+!}\varphi_{N+1}(|k^{-1}D|)k^{-(N+1)}E_{N+1}+\O{\Ph{-\infty}{}}(k^{-\infty}).
$$
Since $A_k\in \Ph{-2-k}{}(\Rea^d)$, we may let
$$
B_1\sim \chi^2 A_0 +\sum_{j=1}^\infty \chi k^{-j}A_k
$$
to obtain
$$\chi R_0\chi \varphi(|k^{-1}D|)=B_1\in k^{-2}\Ph{-2}{}(\Rea^d)$$
with 
$$\sigma(B_1)=\frac{k^{-2}\chi^2(1-\psi(|\xi|))}{|\xi|^2-z^2}.$$
 The proof of the statement for $B_2$ is identical.
\end{proof}

\

\noi  Next, we prove an estimate on the difference between the resolvent at high energy and that at fixed energy.
\begin{lemma}
\label{lem:highFreqDiff}
Suppose that $z\in [0,E]$. Let $\psi\in \Cc(\Rea)$ with $\psi\equiv 1$ on $[-2E^2,2E^2]$. Then for $\chi \in \Cc(\Rea^d)$, 
$$\chi (R_0(zk)-R_0(1))\chi(1-\psi(|k^{-1}D|))\in k^{-2}\Ph{-4}{}(\Rea^d).$$ 
\end{lemma}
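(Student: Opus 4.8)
\emph{The plan.} I would start from the resolvent identity $R_0(zk)-R_0(1)=(z^2k^2-1)R_0(zk)R_0(1)$. Since $|z^2k^2-1|\leq E^2k^2+1\leq Ck^2$ for $k\geq k_0$, and multiplication by a scalar of size $\O{}(k^2)$ maps $k^{-4}\Ph{-4}{}(\Rea^d)$ into $k^{-2}\Ph{-4}{}(\Rea^d)$, the claim reduces to showing
\[
\chi R_0(zk)R_0(1)\chi(1-\psi(|k^{-1}D|))\in k^{-4}\Ph{-4}{}(\Rea^d).
\]
The point is that $R_0(zk)$ and $R_0(1)$ are Fourier multipliers — hence commute with one another and with every frequency cut-off — and that on frequencies away from their poles (at semiclassical frequency $|z|\leq E$ and $k^{-1}$ respectively, both well inside $\{|\xi|\leq 2E^2\}$ once $k$ is large) they are semiclassical operators of class $k^{-2}\Ph{-2}{}$.

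\emph{Main steps.} First I would commute a high-pass next to $R_0(1)$: choose $\wt\psi\in\Cc(\Rea)$ with $\wt\psi\equiv 1$ on a sufficiently large interval around $0$ and $\supp\wt\psi$ at positive distance from $\supp(1-\psi)$, set $\wt\varphi=1-\wt\psi$ (so $\wt\psi(1-\psi)\equiv 0$, hence $\wt\varphi(1-\psi)=1-\psi$), and write $\chi(1-\psi(|k^{-1}D|))=\wt\varphi(|k^{-1}D|)\chi(1-\psi(|k^{-1}D|))+\wt\psi(|k^{-1}D|)\chi(1-\psi(|k^{-1}D|))$, the last term being $\O{\Ph{-\infty}{}}(k^{-\infty})$ by a frequency-separation (non-stationary-phase) argument — its kernel is in fact smooth, Schwartz in both variables, and $\O{}(k^{-\infty})$ with all weighted seminorms. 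Multiplying on the left by $\chi R_0(zk)R_0(1)$, the main term becomes $\chi R_0(zk)\,[R_0(1)\wt\varphi(|k^{-1}D|)]\,\chi(1-\psi(|k^{-1}D|))$, where $P_1:=R_0(1)\wt\varphi(|k^{-1}D|)$ is the Fourier multiplier with semiclassical symbol $k^{-2}\wt\varphi(|\xi|)(|\xi|^2-k^{-2})^{-1}$; this is smooth and $\O{}(k^{-2}\langle\xi\rangle^{-2})$ since $\wt\varphi$ vanishes near $\xi=0$, so $P_1\in k^{-2}\Ph{-2}{}$ with symbol supported at semiclassical frequency bounded below. Inserting one further high-pass $\wt{\wt\varphi}(|k^{-1}D|)$ equal to $1$ on $\supp\sigma(P_1)$ and vanishing near the pole of $R_0(zk)$ gives $R_0(zk)P_1=[R_0(zk)\wt{\wt\varphi}(|k^{-1}D|)]P_1=:P_2P_1$ with $P_2\in k^{-2}\Ph{-2}{}$ for the same reason. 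The main term is then $\chi P_2P_1\chi(1-\psi(|k^{-1}D|))$, which by the composition calculus (Lemma \ref{lem:comp}, after rendering $P_1,P_2$ properly supported modulo $\O{\Ph{-\infty}{}}(k^{-\infty})$) lies in $\Ph{0}{}\cdot k^{-2}\Ph{-2}{}\cdot k^{-2}\Ph{-2}{}\cdot\Ph{0}{}=k^{-4}\Ph{-4}{}(\Rea^d)$. Finally the remainder $\chi R_0(zk)R_0(1)\,\O{\Ph{-\infty}{}}(k^{-\infty})$ is negligible: the error operator maps $L^{2,-N}(\Rea^d)\to L^{2,N}(\Rea^d)$ with $\O{}(k^{-\infty})$ norm for every $N$, while $R_0(zk)R_0(1)$ maps $L^{2,s}\to L^{2,-s}$, $s>1/2$, with norm polynomial in $k$ (again by the resolvent identity), so after composing and cutting off by $\chi$ the remainder is $\O{\Ph{-\infty}{}}(k^{-\infty})\subset k^{-2}\Ph{-4}{}$.

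\emph{The hard part.} The substantive difficulties are bookkeeping: arranging a finite chain of frequency cut-offs that simultaneously stays away from the poles of both $R_0(zk)$ and $R_0(1)$ while being compatible with $1-\psi$ (this is exactly where $\psi\equiv1$ on $[-2E^2,2E^2]$ is used), and verifying that $\wt\psi(|k^{-1}D|)\chi(1-\psi(|k^{-1}D|))$ — which is \emph{not} small in $L^2\to L^2$ term by term — is genuinely $\O{\Ph{-\infty}{}}(k^{-\infty})$ and rapidly decaying in both spatial variables, so that it survives composition with the globally $L^2$-unbounded free resolvents. An equivalent route avoids the cut-off chain: from $R_0(zk)-R_0(1)=\int_1^{z^2k^2}(-\Delta-E-\ri0)^{-2}\,\rd E$, the parametrix construction of Lemma \ref{lem:freeHighFreq} applied to the squared operator gives $\chi(-\Delta-E-\ri0)^{-2}\chi(1-\psi(|k^{-1}D|))\in k^{-4}\Ph{-4}{}(\Rea^d)$ uniformly for $E\in[1,z^2k^2]$, and integrating over this interval of length $\O{}(k^2)$ yields the bound directly.
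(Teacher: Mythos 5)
Your proposal is correct in outline but organised quite differently from the paper. The paper never invokes the resolvent identity: it runs the same parametrix/commutator iteration as in Lemma \ref{lem:freeHighFreq}, multiplying $\chi(R_0(zk)-R_0(1))\chi$ on the right by $k^{2}(-k^{-2}\Delta-z^{2})$ to produce $\chi R_0(1)(z^2k^2-1)\chi$ plus commutator terms, then composing with the microlocal parametrix $A_0$ (with $\sigma(A_0)=k^{-2}\varphi(|\xi|)/(|\xi|^2-z^2)$) and iterating; the two extra orders come from $\chi R_0(1)\chi$ being itself in $k^{-2}\Ph{-2}{}$ on the support of the high-frequency cutoff. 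You instead factor the difference via $R_0(zk)-R_0(1)=(z^2k^2-1)R_0(zk)R_0(1)$ and reduce to exact Fourier-multiplier algebra plus one frequency-separation estimate; this is more elementary and avoids redoing the iteration, whereas the paper's route uses only ellipticity (so it would survive variable coefficients) and never has to make sense of the composition of two outgoing resolvents or appeal to weighted (limiting-absorption) bounds. Two caveats in your version: the identity and the composition $R_0(zk)R_0(1)$ with the $+\ri 0$ prescriptions deserve a word (distinct characteristic spheres, or an $\eps\to 0^+$ limit), and your bound on $R_0(zk)R_0(1):L^{2,s}\to L^{2,-s}$ ``again by the resolvent identity'' degenerates when $z^2k^2$ is near $1$ (the divided difference must then be replaced by a bound on the derivative of the resolvent, i.e.\ on $R_0(\mu)^2$ with stronger weights); for fixed $z$ and $k\geq k_0(z)$, in particular for the application with $z=1$, this is harmless, but it should be flagged if uniformity in $z\in[0,E]$ is wanted. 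Your correctly identified ``hard part'' — that $\wt\psi(|k^{-1}D|)\chi(1-\psi(|k^{-1}D|))$ is negligible with rapid decay in both variables so that it survives composition with the resolvents — is exactly the point the paper's commutator iteration is designed to bypass.
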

\begin{proof}[Proof of Lemma \ref{lem:highFreqDiff}]
We proceed as in the proof of Lemma \ref{lem:freeHighFreq}.
Let $\chi_0=\chi \in \Cc(\Rea^d)$ and $\chi_n\in \Cc(\Rea^d)$ have $\chi_n\equiv 1$ on $\supp\, \chi_{n-1}$ for $n\geq 1$. Let $\psi_0=\psi\in \Cc(\Rea)$ have $\psi \equiv 1$ on $[-2E^2,2E^2]$, let $\psi_n\in \Cc(\Rea)$ have $\psi_n\equiv 1$ on $[-3E^2/2,3E^2/2]$ and $\supp\, \psi_n\subset \{\psi_{n-1}\equiv 1\}$ for $n\geq 1$. Finally, let $\varphi_n=(1-\psi_n).$  
Then, 
\begin{multline}
\label{eqn:ellipticResolveDiff}
k^{2}\chi (R_0(zk)-R_0(1))\chi (-k^{-2}\Delta -z^2)\\=\chi R_0(1)\left(z^2k^2-1\right)\chi-\chi k^{2} \chi_1(R_0(zk)-R_0(1))\chi_1 [\chi,k^{-2}\Delta]).
\end{multline}
Now, by Lemma \ref{lem:microlocalElliptic} there exists $A_0\in k^{-2}\Ph{-2}{}(\Rea^d)$ such that 
\eqref{eq:Bris1}, \eqref{eq:Bris2}, and \eqref{eq:Bris3} hold.
Composing \eqref{eqn:ellipticResolveDiff} on the right with $k^{-2} A_0$, we have 
\begin{equation}
\label{e:diff}
\begin{aligned}
&\chi (R_0(zk)-R_0(1))\chi \varphi(|k^{-1}D|)\\
&=k^{2}\chi R_0(1)\chi (z^2-k^{-2})A_0-k^{2}\chi \chi_1(R_0(zk)-R_0(1))\chi_1\varphi_1(|k^{-1}D|)[\chi,k^{-2}\Delta]A_0+\O{\Ph{-\infty}{}}(k^{-\infty}).
\end{aligned}
\end{equation}
In particular, iterating using the same argument to write 
\begin{align*}
&\chi_1 (R_0(zk)-R_0(1))\chi _1\varphi_1(|k^{-1}D|)\\
&=k^{2}\chi_1 R_0(1)\chi_1 (z^2-k^{-2})A_1-k^{2}\chi_1 \chi_2(R_0(zk)-R_0(1))\chi_2\varphi_2(|k^{-1}D|)[\chi_1,k^{-2}\Delta]A_1+\O{\Ph{-\infty}{}}(k^{-\infty}),
\end{align*} 
we see that the right hand side of \eqref{e:diff} is in $k^{-2}\Ph{-4}{}(\Rea^d).$
\end{proof}

\

With Lemma \ref{lem:freeHighFreq} and \ref{lem:highFreqDiff} in hand, we obtain the high-frequency estimates of the boundary-integral operators
by comparing them to those at fixed frequency.

\begin{lemma}[High-frequency estimates]
\label{lem:outsideSphere}
Let $M>1$ and $\psi\in \Cc(\Rea)$ with $\psi\equiv 1$ for $|\xi|<M$. Suppose that $\bound$ is both Lipschitz and  
piecewise smooth (in the sense of Definition \ref{def:psh}).
Then for $k>1$ and $\chi\in \Cc(\Rea^d)$
\begin{align}\label{eq:HF1}
\gamma R_0(k)\chi (1-\psi(|k^{-1}D|))\gamma^*&=\O{L^2(\bound)\to {H}_{k}^1(\bound)}(k^{-1}(\log k)^{1/2}).
\end{align}
If, in addition, $\bound$ is $C^{2,\alpha}$ for some $\alpha>0$, then 
\begin{align}\label{eq:HF2}
\mp\frac{1}{2}I+\gamma^{\pm} R_0(k)\chi (1-\psi(|k^{-1}D|))L^*\gamma^*&=\O{L^2(\bound)\to {H}_k^{1}(\bound)}(\log k)\\
\pm\frac{1}{2}I+\gamma^{\pm} LR_0(k)\chi (1-\psi(|k^{-1}D|))\gamma^*&=\O{L^2(\bound)\to {H}_k^{1}(\bound)}(\log k). \label{eq:HF3}
\end{align}
\end{lemma}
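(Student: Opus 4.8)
The plan is to prove all three identities by comparing the high-frequency part of each boundary-integral operator with the corresponding operator at the fixed frequency $k=1$, where the mapping properties are classical, and to show — using Lemmas \ref{lem:freeHighFreq} and \ref{lem:highFreqDiff} — that the difference, together with the low/mid-frequency piece that must be subtracted off, is appropriately small. Throughout I would work one smooth piece $\Gamma_i$ of $\bound$ at a time (the representations \eqref{eqn:slo}, \eqref{eqn:dlo}, \eqref{eqn:adlo} being finite sums over the pieces) and insert spatial cutoffs $\chi\in\Cc(\Rea^d)$ with $\chi\equiv1$ near $\bound$: this is harmless since $\gamma^*$ and $L^*\gamma^*$ produce distributions supported on the compact set $\bound$ while $\gamma$ and $\gamma^\pm L$ see only data near $\bound$, so inserting $\chi$ changes the operators by terms with smooth, $\O{\Ph{-\infty}{}}(k^{-\infty})$ Schwartz kernels. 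I would likewise commute $\chi$ freely past $\psi(|k^{-1}D|)$, the commutator being in $k^{-1}\Ph{-1}{}(\Rea^d)+\O{\Ph{-\infty}{}}(k^{-\infty})$ by Lemma \ref{lem:comp}.

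For \eqref{eq:HF1}: applying Lemma \ref{lem:freeHighFreq} with $z=1$ (and Lemma \ref{lem:highFreqDiff} to absorb the mismatch in the orderings of the cutoffs) replaces the left-hand side of \eqref{eq:HF1}, modulo $\O{\Ph{-\infty}{}}(k^{-\infty})$ errors, by $\gamma B_1\gamma^*$ with $B_1\in k^{-2}\Ph{-2}{}(\Rea^d)$ spatially compactly supported. It then remains to prove $\|\gamma B_1\gamma^*f\|_{H^1_k(\bound)}\le Ck^{-1}(\log k)^{1/2}\|f\|_{L^2(\bound)}$. For this I would combine the semiclassical trace bounds — $\gamma^*$ maps $L^2(\bound)$ to $H_k^{-1/2-\varepsilon}(\Rea^d)$ with norm $\lesssim_\varepsilon k^{1/2}$, and $\gamma$ maps $H_k^{s}(\Rea^d)$ to $H_k^{s-1/2}(\bound)$ with norm $\lesssim k^{1/2}$ for $s>1/2$ — with the fact that $B_1$ gains two semiclassical derivatives and carries the prefactor $k^{-2}$; since the symbol of $B_1$ decays, only a polynomial-in-$k$ range of frequencies is relevant, and balancing the $\varepsilon$-loss of the (critical) trace against that range converts the naive gain $k^{1/2}\cdot k^{-2}\cdot k^{1/2}=k^{-1}$ into $Ck^{-1}(\log k)^{1/2}$. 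The contribution of the $R_0(k)-R_0(1)$ difference, which by Lemma \ref{lem:highFreqDiff} lies in the better class $k^{-2}\Ph{-4}{}(\Rea^d)$, is even smaller.

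For \eqref{eq:HF2} and \eqref{eq:HF3}: the key point is that the high-frequency-truncated double-layer potential $R_0(k)\chi(1-\psi(|k^{-1}D|))L^*\gamma^*f$ still jumps across $\bound$ — the jump is a local, high-frequency phenomenon encoded in the diagonal singularity of the kernel of $R_0(k)$, which is unaffected by removing the frequencies $<Mk$ — so that the two one-sided traces differ by (a multiple of) $f$, and the $\pm\tfrac12 I$ in \eqref{eq:HF2} is precisely what is needed to cancel that jump. Concretely I would split $R_0(k)=R_0(1)+(R_0(k)-R_0(1))$: the difference term lies in $k^{-2}\Ph{-4}{}(\Rea^d)$ by Lemma \ref{lem:highFreqDiff} and contributes $\O{L^2(\bound)\to H^1_k(\bound)}(1)$, exactly as in the proof of \eqref{eq:HF1}. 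For the $R_0(1)$ term I would use the fixed-frequency jump relation $\gamma^\pm R_0(1)L^*\gamma^* = \pm\tfrac12 I + D_1$, valid with $D_1\colon L^2(\bound)\to H^1(\bound)$ bounded precisely because $\bound$ is $C^{2,\alpha}$ (this is where that hypothesis is used), to write, up to a smoothing error and using $\mp\tfrac12 I\pm\tfrac12 I=0$,
\begin{equation*}
\mp\tfrac12 I + \gamma^\pm R_0(1)\chi(1-\psi(|k^{-1}D|))L^*\gamma^* = D_1 - \gamma^\pm R_0(1)\chi\,\psi(|k^{-1}D|)L^*\gamma^*.
\end{equation*}
Here $D_1\colon L^2(\bound)\to H^1(\bound)\hookrightarrow H^1_k(\bound)$ is $\O{}(1)$, and the low/mid-frequency term $\gamma^\pm R_0(1)\chi\,\psi(|k^{-1}D|)L^*\gamma^*$ is estimated by a direct frequency-space computation: the symbol of $R_0(1)$ is $\sim|\xi|^{-2}$ away from $|\xi|=1$, and paired against the Fourier transform of $L^*(f\delta_\bound)$ (controlled using the bound $\alpha=1$ of Lemma \ref{lem:FourierRestrict}) this produces an integral of size $\sim\int_1^{Mk}\rho^{-1}\,\rd\rho\sim\log k$, which is the source of the $\log k$ loss — one derivative more critical than for the single layer, since $L^*$ is first order. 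Estimate \eqref{eq:HF3} is proved identically, with $L$ placed to the left of $R_0$, the roles of $\Gamma_1,\Gamma_2$ interchanged, and the jump relation $\partial_n^\pm\mathcal S_1 = \mp\tfrac12 I + D_1'$.

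The main obstacle throughout is the boundary-trace step at critical regularity: $\gamma^*f = f\delta_\bound$ lies exactly at Sobolev order $-1/2$, so $\gamma$ and $\gamma^*$ are unbounded at the endpoint, and one must work at $\pm(1/2+\varepsilon)$ and carefully track how the $\varepsilon$-dependent constants interact with the polynomial frequency range dictated by $B_1\in k^{-2}\Ph{-2}{}(\Rea^d)$ (and by the extra derivative $L^*$ in the double-layer case) — this is what produces the (ultimately harmless, since dominated by the low-frequency $\log k$ of Lemma \ref{lem:LF}) powers of $\log k$. A secondary, purely bookkeeping, difficulty is matching the orderings of the cutoffs $\chi$ and $\psi(|k^{-1}D|)$ in the statement to those in Lemmas \ref{lem:freeHighFreq} and \ref{lem:highFreqDiff}, and carrying the finite sums over the smooth pieces $\Gamma_i$ of $\bound$.
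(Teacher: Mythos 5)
Your overall architecture for \eqref{eq:HF2}--\eqref{eq:HF3} coincides with the paper's (split off $R_0(k)-R_0(1)$ via Lemma \ref{lem:highFreqDiff}, use the $C^{2,\alpha}$ mapping property of the fixed-frequency operator $D_1$ to cancel the $\mp\tfrac12 I$, then estimate the subtracted term $\gamma^{\pm}R_0(1)\chi\psi(|k^{-1}D|)L^*\gamma^*$), but the way you estimate that last mid-frequency term contains a genuine gap. You propose to pair the symbol $\sim|\xi|^{-2}$ of $R_0(1)$ against $\widehat{L^*(f\delta_\bound)}$ controlled by the $\alpha=1$ bound \eqref{eqn:fourierRestrictionEstimate1} of Lemma \ref{lem:FourierRestrict}, and you claim the resulting integral is $\sim\int_1^{Mk}\rho^{-1}\,\rd\rho\sim\log k$. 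That accounting omits the dual side: to measure the output on $\bound$ you must also pair with $\widehat{g\delta_\bound}$, whose sphere-by-sphere restriction bound \eqref{eqn:fourierRestrictionEstimate2} costs $\la\rho\ra^{\beta}$ with $\beta\geq 1/4$ (and this exponent is sharp for quasimode restriction). The honest version of your computation therefore gives $\int_1^{Mk}\rho^{\alpha+\beta-2}\,\rd\rho\sim k^{1/4}$ (or $k^{1/6}$ when $\bound$ is piecewise curved), not $\log k$; so as written you prove \eqref{eq:HF2}--\eqref{eq:HF3} only with $k^{1/4}$ in place of $\log k$, which is weaker than the stated lemma (it would still suffice for the final theorem in the flat and curved cases only because the low-frequency term dominates, but it is not the statement you were asked to prove). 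The missing ingredient is to use frequency-localized \emph{trace}-type estimates, i.e.\ bounds averaged in $\rho$ over dyadic shells (equivalently the paper's estimate \eqref{e:restrictEsth} on $\psi(|k^{-1}D|)\gamma^*$ and $\gamma\psi(|k^{-1}D|)$, which cost only $(\log k)^{1/2}$ at the critical Sobolev exponent), rather than the sphere-by-sphere quasimode bounds of Lemma \ref{lem:FourierRestrict}: each dyadic frequency block then contributes $O(1)$ and the sum over $\lesssim\log k$ blocks produces the $\log k$. The paper also obtains the $H^1_k$ bound by combining a non-semiclassical $L^2\to H^1$ bound of size $k$ with an $L^2\to L^2$ bound of size $\log k$; you do not spell out this two-norm bookkeeping, though that part is routine.

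For \eqref{eq:HF1} your route is genuinely different from the paper's and, in principle, viable: you treat $\chi R_0(k)\chi(1-\psi(|k^{-1}D|))$ directly as an element $B_1\in k^{-2}\Ph{-2}{}(\Rea^d)$ via Lemma \ref{lem:freeHighFreq} with $z=1$, whereas the paper again compares with $R_0(1)$, invokes the Lipschitz-domain result $S_1:L^2(\bound)\to H^1(\bound)$ (Verchota) and subtracts the frequencies $\lesssim k$ using \eqref{e:restrictEsth}; your route would bypass Verchota entirely. But the crux of your argument is exactly the step you leave as a gesture: since $B_1$ gains only two semiclassical derivatives, $B_1\gamma^*f$ sits in $H_k^{3/2-\varepsilon}$, and mapping its trace into $H_k^1(\bound)$ is a critical-exponent trace estimate that fails at $\varepsilon=0$; you need (and must prove, e.g.\ by a dyadic decomposition in frequency up to the polynomial scale allowed by the decay of the symbol of $B_1$) an endpoint trace bound with $(\log k)^{1/2}$ loss valid on a Lipschitz, piecewise smooth $\bound$. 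This lemma is true but is not in the paper, and "balancing the $\varepsilon$-loss against the frequency range" is precisely where the work lies; without it your \eqref{eq:HF1} is not established. In short: \eqref{eq:HF1} is a plausible alternative proof with its key lemma unproved, and \eqref{eq:HF2}--\eqref{eq:HF3} as argued give $k^{1/4}$ rather than $\log k$ because the quasimode restriction bounds are the wrong tool for the mid-frequency term.
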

\begin{remark}
The factors of $\log k$ in the bounds of Lemma \ref{lem:outsideSphere} are likely artifacts of our proof, but since they do not affect our final results, we do not attempt to remove them here. In fact, if $\bound$ is smooth (rather than piecewise smooth), then one can show that the logarithmic factors can be removed from the bounds in Lemma \ref{lem:outsideSphere} using the analysis in \cite[Section 4.4]{Ga:15}.
\end{remark}
\begin{proof}[Proof of Lemma \ref{lem:outsideSphere}]
By Lemma \ref{lem:highFreqDiff},
\begin{equation*} 
A_k:=\chi (R_0(k)-R_0(1))\chi (1-\psi(k^{-1}D)) \in k^{-2}\Ph{-4}{}.
\end{equation*}
Note that for $s>1/2$,
\begin{equation}\label{eqn:restrictBound}\gamma=\O{H_k^s(\Rea^d)\to H_k^{s-1/2}(\bound)}(k^{1/2});
\end{equation}
this bound follows from repeating the proof of the trace estimate in \cite[Lemma 3.35]{Mc:00} but working in semiclassically rescaled spaces.

Let $B_k:= \gamma A_k\gamma^*$, $C_k:=\gamma A_{k}L^*\gamma^*,$ $C'_k:=\gamma LA_k\gamma^*.$ 
Then, using \eqref{eqn:restrictBound} and the fact that $L,L^*=\O{H_k^s\to H_k^{s-1}}(k)$, we have that $B_k=\O{L^2\to H_k^1}(k^{-1})$ and $C_k,\,C'_k=\O{L^2\to H_k^1}(1)$.

Recalling the notation for $S_k$ \eqref{eqn:slo}, $D_k$, and $D_k'$ \eqref{eq:dlo}, and the mapping properties recapped in \S\ref{sec:freeresol}, we have \begin{align*} 
\gamma   R_0(1)\chi \gamma^*:&\, L^2(\bound)\to {H}^{1}(\bound)
\end{align*}
when $\bound$ is Lipschitz, and
\begin{align*}
\pm\frac{1}{2}I+\gamma^{\pm}  R_0(1)\chi L^*\gamma^*:&\, L^2(\bound)\to {H}^{1}(\bound)\\
\mp\frac{1}{2}I+\gamma^{\pm} L R_0(1)\chi  \gamma^*:&\,L^2(\bound)\to {H}^{1}(\bound)
\end{align*}
when $\bound$ is  $\Dkreg$. 

Now, note that for $\tilde{\Gamma }$ a precompact smooth hypersurface, and $\psi\in \Cc(\Rea)$, 
\begin{equation*}
\begin{gathered}
\|\psi(|k^{-1}D|)\gamma_{\widetilde{\Gamma}}^*\|_{L^2(\widetilde{\Gamma})\to H^s(\Rea^d)}+\|\gamma_{\widetilde{\Gamma}} \psi(|k^{-1}D|)\|_{H^{-s}(\Rea^d)\to \overline{H^{-s-1/2}}(\widetilde{\Gamma})}\leq C \begin{cases} 1&s<-1/2\\
(\log k)^{1/2}&s= -1/2\\
k^{(s+1/2)}&s> -1/2.\end{cases}
\end{gathered}
\end{equation*}
Thus, since $\psi(|k^{-1}D|):\mc{S}'(\R^d)\to C^\infty(\R^d)$ and in particular, $\gamma \psi(|k^{-1}D|):\mc{S}'(\R^d)\to \overline{H^1}(\Gamma)$,
\begin{equation}
\label{e:restrictEsth}
\begin{gathered}
\|\psi(|k^{-1}D|)\gamma^*\|_{L^2(\Gamma)\to H^s(\Rea^d)}+\|\gamma_{\Gamma} \psi(|k^{-1}D|)\|_{H^{-s}(\Rea^d)\to \overline{H^{-s-1/2}}(\Gamma)}\leq C \begin{cases} 1&s<-1/2\\
(\log k)^{1/2}&s= -1/2\\
k^{(s+1/2)}&s> -1/2.\end{cases}
\end{gathered}
\end{equation}
Furthermore, notice that by Lemma \ref{lem:freeHighFreq}, if $\psi_1\in \Cc(\Rea)$ has $\psi_1\equiv 1$ on $\supp\, \psi$, then 
$$\chi R_0(1)\chi \psi(|k^{-1}D|)=\psi_1(|k^{-1}D|)\chi R_0(1)\chi \psi(|k^{-1}D|)+\O{\Ph{-\infty}{}}(k^{-\infty}).$$ 
In particular, using this estimate together with \eqref{e:restrictEsth} and that $\chi R_0(1)\chi:H^s(\Rea^d)\to H^{s+2}(\Rea^d)$, 
\begin{gather*}
\gamma^{\pm}R_0(1)\chi \psi(|k^{-1}D|) \gamma^*=\begin{cases}\O{L^2(\Gamma)\to \overline{H^{1}}(\Gamma)}((\log k)^{1/2})\\
\O{L^2(\Gamma)\to L^2(\Gamma)}(1),
\end{cases}\\
\gamma^{\pm}  R_0(1)\chi\psi(|k^{-1}D|) L^*\gamma^*=\begin{cases}\O{L^2(\Gamma)\to \overline{H^{1}}(\Gamma)}(k)\\
\O{L^2(\Gamma)\to L^2(\Gamma)}(\log k),
\end{cases}\\
\gamma^{\pm} L R_0(1)\chi\psi(|k^{-1}D|)\gamma^*=\begin{cases}\O{L^2(\Gamma)\to \overline{H^{1}}(\Gamma)}(k)\\
\O{L^2(\Gamma)\to L^2(\Gamma)}(\log k),
\end{cases}
\end{gather*} 
Hence,
\begin{align*} 
\gamma  R_0(k)\chi (1-\psi(|k^{-1}D|))\gamma^*&=\gamma  R_0(1)\chi (1-\psi(|k^{-1}D|))\gamma^* +B_k= \O{L^2\to \overline{H^1}}((\log k)^{1/2}).
\end{align*}
Furthermore, since 
$R_0(k)\chi(1-\psi(|k^{-1}D|))\in k^{-2}\Ph{-2}{}(\Rea^d)$, and we have \eqref{eqn:restrictBound}, 
\beq\label{eq:HF1a}
\gamma  R_0(k)\chi (1-\psi(|k^{-1}D|))\gamma^*=\O{L^2\to L^2}(k^{-1}).
\eeq
Next, observe that
\begin{align}\nonumber
\mp\frac{1}{2}+\gamma^{\pm} R_0(k)\chi (1-\psi(|k^{-1}D|))L^*\gamma^*&=\mp\frac{1}{2}+\gamma^{\pm}  R_0(1)\chi (1-\psi(|k^{-1}D|))L^*\gamma^* +C_k\\
&=\begin{cases}\O{L^2\to \overline{H^1}}(k)\\
\O{L^2\to L^2}(\log k),
\end{cases}\label{eq:HF2a}
\end{align} 
\begin{align}\nonumber
\pm\frac{1}{2}+\gamma^{\pm}L R_0(k)\chi (1-\psi(|k^{-1}D|))\gamma^*&=\pm\frac{1}{2}+\gamma^{\pm}  LR_0(1)\chi (1-\psi(|k^{-1}D|))\gamma^* + C_k'\\
&=\begin{cases}\O{L^2\to \overline{H^1}}(k)\\
\O{L^2\to L^2}(\log k).
\end{cases}\label{eq:HF3a}
\end{align} 
Since $\bound$ is piecewise smooth, $\bound = \sum_{i=1}^N \Gamma_i$. 
Applying \eqref{eq:HF1a}-\eqref{eq:HF3a} with $\Gamma=\Gamma_i$, summing over $i$, and then using the result \eqref{eq:norm1} we obtain \eqref{eq:HF1}- \eqref{eq:HF3}
\end{proof}

\

\bpf[Proof of Parts (a) and (b) of Theorem \ref{thm:L2H12}]
This follows from combining the low-frequency estimates \eqref{eq:LF1b}-\eqref{eq:LF3b} in Lemma \ref{lem:LF} with the high-frequency estimates \eqref{eq:HF1}-\eqref{eq:HF3} in Lemma \ref{lem:outsideSphere}, recalling the decompositions \eqref{eqn:slo} and \eqref{eq:dlo}.
\epf

\subsection{Proof of Part (c) of Theorem \ref{thm:L2H12}}\label{sec:proofc}

\begin{proof}[Proof of Part (c) of Theorem \ref{thm:L2H12}]
Observe that \cite[Theorems 4.29, 4.32]{Ga:15} imply that for $\psi\in \Cc(\Rea)$ with $\psi\equiv 1$ on $[-2,2]$, 
$$\psi(|k^{-1}D'|)\Sk=\O{L^2\to H_k^1}(k^{-2/3}),\quad\quad \psi(|k^{-1}D'|)\Dl=\O{L^2\to H_k^1}(1).$$
Then \cite[Lemma 4.25]{Ga:15} shows that $(1-\psi(|k^{-1}D'|))\Sk\in k^{-1}\Ph{-1}{}(\pO)$ and $(1-\psi(|k^{-1}D'|))\Dl\in k^{-1}\Ph{-1}{}(\pO)$ and hence 
$$(1-\psi(|k^{-1}D'|))\Sk=\O{L^2\to H_k^1}(k^{-1}),\quad\quad (1-\psi(|k^{-1}D'|))\Dl=\O{L^2\to H_k^1}(k^{-1}).$$ 
An identical analysis shows that 
$$\Dl'=\O{L^2\to H_k^1}(1).$$ 
\end{proof}

\subsection{Proof of Corollary \ref{cor:trace}}\label{sec:cortrace}

This follows in exactly same way as  \cite[Proof of Corollary 1.2, page 193]{GrLoMeSp:15}. The two ideas are that 
(i) the relationships 
\beqs%\label{eq:dual}
\int_{\Gamma} \phi \,S_k\psi \, \rd s = \int_{\Gamma} \psi \,S_k\phi \, \rd s, \tand 
\int_{\Gamma} \phi \,D_k\psi \, \rd s = \int_{\Gamma} \psi \,D'_k\phi \, \rd s,
\eeqs
for $\phi, \psi \in \LtG$ (see, e.g., \cite[Equation 2.37]{ChGrLaSp:12}),
and the duality of $H^1(\bound)$ and $H^{-1}(\bound)$ (see, e.g., \cite[Page 98]{Mc:00})
allow us to convert bounds on
$S_k$, $D_k$, and $D'_k$ as mappings from $\LtG\rightarrow\HoG$ into bounds on these operators as mappings from $H^{-1}(\bound) \rightarrow \LtG$; and 

\noi (ii) interpolation then allows us to obtain bounds from
$H^{s-1/2}(\bound) \rightarrow H^{s+1/2}(\bound)$ for $|s|\leq 1/2$.

\bre[Using the triangle inequality on $\|D_k'-\ri\eta S_k\|_{\LtG\rightarrow\HoG}$]\label{rem:triangle}
As explained in \S\ref{sec:motivation}, the motivation for proving the $\LtG\rightarrow\HoG$ bounds on $S_k, D_k$, and $D'_k$ is so that they can be used to estimate (in a $k$-explicit way) the smoothing power of $D'_k-\ri \eta S_k$ in the analysis of the Galerkin method via the classic compact-perturbation argument (see \cite[Proof of Theorem 1.10]{GaMuSp:18}). 
We now show that we do not lose anything, from the point of view of $k$-dependence, by using the triangle inequality $\|D_k'-\ri\eta S_k\|_{\LtG\rightarrow\HoG}\leq\|D_k'\|_{\LtG\rightarrow\HoG}+|\eta|\| S_k\|_{\LtG\rightarrow\HoG}$. As a consequence, therefore, the bounds on $\|D_k'-\ri\eta S_k\|_{\LtG\rightarrow\HoG}$ obtained from using the bounds on $\|D_k\|_{\LtG\rightarrow\HoG}$ and $\|S_k\|_{\LtG\rightarrow\HoG}$ in Theorem \ref{thm:L2H1} are sharp.

 First, recall that 
 $D_k'$ and $S_k$ have wavefront set relation given by the billiard ball relation (see for example \cite[Chapter 4]{Ga:15}). Let $B^*\bound$ and $S^*\bound$ denote respectively the unit coball and cosphere bundles in $\bound$. That is, 
 $$
 B^*\bound:=\{(x,\xi)\in T^*\bound : |\xi|_{g(x)}<1\},\qquad S^*\bound:=\{(x,\xi)\in T^*\bound : |\xi|_{g(x)}=1\}.
 $$
  Denote the relation by $C_\beta\subset \overline{B^*\bound}\times\overline{B^*\bound}$ i.e.
 $$ 
 C_\beta=\big\{(x,\xi,y,\eta): (x,\xi)=\beta(y,\eta)\big\}
 $$ 
 where $\beta$ is the billiard ball map (see Figure~\ref{fig:billiardBallmap}).
  To see that the optimal bound in terms of powers of $k$ for 
 $\|D_k'-i\eta S_k\|_{\LtG\rightarrow\HoG}$ is equal to that for $\|D'_k\|_{\LtG\rightarrow\HoG}+|\eta|\|S_k\|_{\LtG\rightarrow\HoG}$, observe that the largest norm for $S_k$ corresponds microlocally to points $(q_1,q_2)\in C_\beta\cap (S^*\bound\times S^*\bound)$ (i.e.~``glancing" to ``glancing"). On the other hand, these points are damped (relative to the worst bounds) for $D_k'$. In particular, microlocally near such points, one expects that 
\begin{equation*}
\N{D_k'f_{q_2}}_{H^1(\bound)}\leq Ck,\qquad \N{S_kf_{q_2}}_{H^1(\bound)}\geq \begin{cases}Ck^{1/2},&\bound \text{ flat},\\Ck^{1/3},&\bound\text{ curved},\end{cases}
\end{equation*}
where $\|f_{q_2}\|_{L^2(\bound)}=1$ and $f_{q_2}$ is microlocalized near $q_2$. 

The norm for $D_k'$ is maximized microlocally near $(p_1,p_2)\in C_\beta\cap (S^*\bound\times B^*\bound)$
 (i.e.~``transversal" to ``glancing"), but near these points, the norm of $S_k$ is damped relative to its worst bound. In particular, microlocally near $(p_1,p_2)$, one expects 
\begin{equation*}
\N{D_k' f_{p_2}}_{H^1(\bound)}\geq \begin{cases}Ck^{5/4},&\bound\text{ flat},\\Ck^{7/6},&\bound\text{ curved},\end{cases}\qquad \N{S_kf_{p_2}}_{H^1(\bound)}\leq \begin{cases}Ck^{1/4},&\bound \text{ flat},\\Ck^{1/6,}&\bound\text{ curved},\end{cases}
\end{equation*}
where $\|f_{p_2}\|_{L^2(\bound)}=1$ and $f_{p_2}$ is microlocalized near $p_2$. 
Therefore, even if $|\eta|$ is chosen so that $\|D'_k\|_{\LtG\rightarrow\HoG}\sim |\eta|\|S_k\|_{\LtG\rightarrow\HoG}$, this analysis shows that there cannot be cancellation since the worst norms occur at different points of phase space.
\ere

 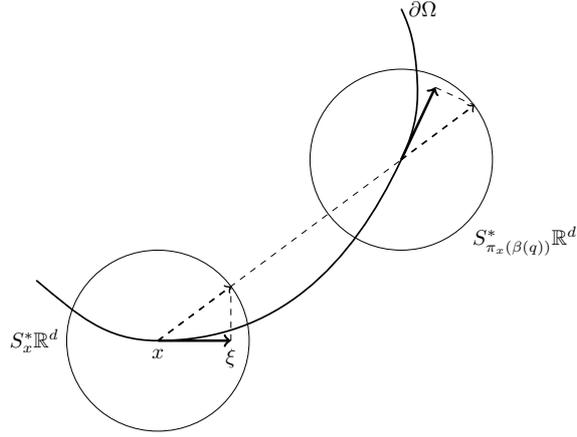
\begin{figure}[h]
\centering
 \scalebox{0.8}{
\begin{tikzpicture}
\begin{scope}[shift={(4,3)}, scale =1.5]
\draw[ultra thin] (0,0) circle [radius=1];
\draw[dashed, thick, ->] (0,0) to (.8,.6);
\draw[very thick,->] (0,0)to (.373,.799);
\draw[thin, dashed] (0.373,0.799)to (0.8,0.6);
\draw (.7,-.9)node[right]{$S_{\pi_x(\beta(q))}^*\R^d$};
\end{scope}
\begin{scope}[scale=1.5]
\draw[ultra thin] (0,0) circle [radius=1];
\draw[very thick,->] (0,0)node[below]{$x$} to (.8,0)node[below]{$\xi$};
\draw[thin, dashed](.8,0) to (.8,.6);
\draw[dashed, thick, ->] (0,0) to (.8,.6);
\draw (-1,0)node[left]{$S_x^*\R^d$};
\end{scope}
\draw [thick](-2,1)to [out=-40, in =180] (0,0) to [out=0, in =-115] (4,3) to [out=65, in =-85] (4.25,4.5) to [out=95, in =-65] (4,5.5);
\draw (4,5.5) node[right]{$\bound$};
\draw[ dashed, thin] (0,0) to (4,3);
\end{tikzpicture}
}
\caption[Billiard ball map]{\label{fig:billiardBallmap}
A recap of the billiard ball map. Let $q=(x,\xi)\in B^*\bound$ (the unit ball in the cotangent bundle of $\bound$). The solid black arrow on the left denotes the covector $\xi\in B_x^*\bound$, with the dashed arrow denoting the unique inward-pointing unit vector whose tangential component is $\xi$. The dashed arrow on the right is the continuation of the dashed arrow on the left, and the solid black arrow on the right is $\xi(\beta(q))\in B_{\pi_x(\beta(q))}^*\bound.$ The center of the left circle is $x$ and that of the right is $\pi_x(\beta(q)).$ If this process is repeated, then the dashed arrow on the right is reflected in the tangent plane at $\pi_x(\beta(q))$: the standard ``angle of incidence equals angle of reflection" rule.
}
\end{figure}

\section{Sharpness of the bounds in Theorem \ref{thm:L2H1}}\label{app:sharpness}

We now prove that the powers of $k$ in the $\|S_k\|_{\LtG\rightarrow\HoG}$ bounds in Theorem \ref{thm:L2H12} are optimal. 
The analysis in \cite[\S A.3]{HaTa:15} proves that the powers of $k$ in the $\|D_k\|_{\LtG\rightarrow\LtG}$ bounds are optimal, but can be adapted in a similar way to below to prove the sharpness of the $\|D_k\|_{\LtG\rightarrow\HoG}$ bounds.

In this section we write $x\in \Rea^d$ as $x= (x', x_d)$ for $x'\in \Rea^{d-1}$, and $x'=(x_1,x'')$ (in the case $d=2$, the $x''$ variable is superfluous).

\ble[Lower bound on $\|S_k\|_{\LtG\rightarrow\HoG}$ when $\bound$ contains a line segment]\label{lem:sharp1}
If $\bound$ contains the set
$$\big\{(x_1,0) : |x_1|<\delta\big\}$$
for some $\delta>0$ and is $C^2$ in a neighborhood thereof
(i.e.~$\bound$ contains a line segment),
then there exists $k_0>0$ and $C>0$ (independent of $k$), such that, for all $k\geq k_0$, 
\beqs
\N{S_k}_{\LtGt} \geq Ck^{-1/2} \quad\tand\quad  \N{S_k}_{\LtG\rightarrow\HoG} \geq Ck^{1/2}.
\eeqs
\ele

Lemma \ref{lem:sharp1} shows that the bound \eqref{eqn:optimalFlatSl}, when $\bound$ is piecewise smooth, is sharp up to a factor of $\log k$.

\ble[General lower bound on $\|S_k\|_{\LtG\rightarrow\HoG}$]\label{lem:sharp2}
If  $\bound$ is $C^2$ in a neighborhood of a point
then there exists $k_0>0$ and $C>0$ (independent of $k$), such that, for all $k\geq k_0$, 
\beqs
\N{S_k}_{\LtGt} \geq Ck^{-2/3} \quad\tand\quad  \N{S_k}_{\LtG\rightarrow\HoG} \geq Ck^{1/3}.
\eeqs
 \ele

Lemma \ref{lem:sharp2} shows that the bound \eqref{eq:k13log}, when $\bound$ is piecewise curved, is sharp up to a factor of $\log k$ and that the bound \eqref{eq:k13}, when $\bound$ is smooth and curved, is sharp.

\bre
The lower bound $\|S_k\|_{\dot{H}^s(\Gamma)\rightarrow H^{s+1}(\Gamma)}\geq Ck^{1/2}$ when $\Gamma$ is a flat screen (i.e.~a bounded and relatively open subset of $\{x \in \Rea^d : x_d=0\}$) and $s\in \Rea$
is proved in \cite[Remark 4.2]{ChHe:15} (recall that $\dot{H}^s(\Gamma)$ is defined in Definition \ref{def:Hdot}).
\ere

\bpf[Proof of Lemma \ref{lem:sharp1}]
By assumption $\Gamma\subset \partial\Omega$, where
$$
\Gamma:=\Big\{ (x_1,x'',\gamma(x')) : |x'|<\delta\Big\}
$$
for some $\gamma(x')$ with 
$\gamma(x_1,0)=0$ for $|x_1|<\delta$ (since the line segment $\{ (x_1,0) : |x_1|<\delta \}\subset \Gamma$).

By the definition of the operator norm, it is sufficient to prove that there exists $u_k\in \LtG$ with $\supp \,u_{k}\subset\Gamma$, $k_0>0$, and $C>0$ (independent of $k$), such that, for all $k\geq k_0$,
 \beq\label{eq:sharp1a}
 \|S_k u_k\|_{L^2(\Gamma)}\geq Ck^{-1/2}\|u\|_{L^2(\Gamma)}\quad\tand\quad \|\partial_{x_1}S_k u_k\|_{L^2(\Gamma)}\geq Ck^{1/2}\|u\|_{L^2(\Gamma)}.
\eeq
\noi We begin by observing that the definition of $\Phi_k(x,y)$ \eqref{eq:fund} and the asymptotics of Hankel functions for large argument and fixed order (see, e.g., \cite[\S10.17]{Di:16}) imply that
\begin{align}
\label{e:PhiForm}\Phi_k(x,y)&=C_dk^{d-2}\re^{\ri k|x-y|}\bigg((k|x-y|)^{-(d-1)/2}+\O{}(k|x-y|)^{-(d+1)/2}\bigg),\\
\la V,\partial_x\ra\Phi_k(x,y)&=C_d'k^{d-1}\frac{\la V,x-y\ra}{|x-y|}\re^{\ri k|x-y|}\bigg((k|x-y|)^{-(d-1)/2}+\O{}(k|x-y|)^{-(d+1)/2}\bigg).
\label{e:PhidForm}
\end{align}
Let $\chi\in \Cc(\Rea)$ with $\supp \chi\subset[-2,2]$, $\chi(0)\equiv1$ on $[-1,1]$ and define 
\beq\label{eq:chi}
\chi_{\e,\gamma_1,\gamma_2}(x')=\chi\big(\e^{-1}k^{\gamma_1}x_1\big)\chi\big(\e^{-1} k^{\gamma_2}|x''|\big),
\eeq
In what follows, we suppress the dependence of $u$ on $k$ for convenience. 
Let
$u(x', \gamma(x')):=\re^{\ri kx_1}\chi_{\e,0,1/2}(x').$
The definition of $\chi$ implies that
\beqs
\supp \,u = \Big\{ 
 (x', \gamma(x'))
: |x_1|\leq 2\e, \, |x''|\leq 2\e k^{-1/2}\Big\},
\eeqs
and thus $\supp \, u\subset \Gamma$ for $\e$ sufficiently small and $k$ sufficiently large (say $\e< (2\sqrt{2})^{-1}\delta$ and $k>1$); for the rest of the proof we assume that $\e$ and $k$ are such that 
this is the case. 
Observe also that 
\beq\label{eq:normu}
\|u\|_{L^2(\Gamma)}\sim C_\e k^{-(d-2)/4}.
\eeq
Let
$$U:=\Big\{ (x', \gamma(x'))
: M\e\leq x_1\leq 2 M\e, \,\,|x''|\leq \e k^{-1/2},\quad M\gg 1\Big\}; $$
the motivation for this choice comes from the analysis in Remark \ref{rem:triangle} below. Indeed, we know that $S_k$ is largest microlocally near points that are glancing in both the incoming and outgoing variables. Since $u$ concentrates microlocally at $x=0$, $\xi=(1,0)$ up to scale $k^{-1/2}$, the billiard trajectory emanating from this point is $\{ t(1,0): t>0\}$. This ray is always glancing since $\Gamma$ is flat. Therefore, we choose $U$ to contain this ray up to scale $k^{-1/2}$. 

Then for $x\in U$, $y\in \supp\, u$, 
\begin{align}\nonumber
|(x',\gamma(x'))-(y',\gamma(y'))|^2&=(x_1-y_1)^2+|x''-y''|^2+|\gamma(x')-\gamma(y')|^2,
\end{align}
Then, observe that by Taylor's formula
$$\gamma(x')-\gamma(y')=\gamma(x_1,0)-\gamma(y_1,0)+\partial_{x''}\gamma(x_1,0)(x''-y'')+y''(\partial_{x''}\gamma(x_1,0)-\partial_{x''}\gamma(y_1,0))+\O{}(|x''|^2+|y''|^2).$$
Since $\gamma(x_1,0)=0$ for $|x_1|<\delta$, 
$$|\gamma(x')-\gamma(y')|^2=\O{}\big(|x''-y''|^2)+\O{}(|x''|^2+|y''|^2).$$
In particular,
\begin{align}\nonumber
|(x',\gamma(x'))-(y',\gamma(y'))|&=(x_1-y_1)+\O{}\Big(\big[|x''-y''|^2+|x''|^2+|y''|^2\big]|x_1-y_1|^{-1}\Big)\\
&=x_1-y_1+\O{}\big(k^{-1}M^{-1}\e\big)\label{eq:exp1},\\
&=x_1\bigg(1+\O{}\big(M^{-1}\big)+\O{}\big(k^{-1}M^{-2}\big)\bigg).\label{eq:exp2}
\end{align}
We have from the Hankel-function asymptotics \eqref{e:PhiForm} and the definition of $u$ that, for $x\in U$,
\begin{align*}
S_ku(x)=C_dk^{d-2}\int_{\Gamma}\re^{\ri k|x-y|+\ri ky_1}&\bigg(k^{-(d-1)/2}|x-y|^{-(d-1)/2}\\
&\qquad+\O{}\left((k|x-y|)^{-(d+1)/2}\right)\bigg)\chi_{\e,0,1.2}(y')\rd s(y),
\end{align*}
and then using the asymptotics \eqref{eq:exp1} in the exponent of the integrand and the asymptotics \eqref{eq:exp2} in the rest of the integrand, we have, for $x\in U$,
\begin{align*}
S_ku(x)=C_dk^{d-2}\frac{\re^{\ri kx_1}}{k^{(d-1)/2}|x_1|^{(d-1)/2}}\int_\Gamma &\big(1+\O{}(M^{-1}\e)\big)\\
&\qquad\bigg(1+\O{}(M^{-1})+\O{\e,M}(k^{-1})\bigg)\chi_{\e,0,1/2}(y')\rd s(y).
\end{align*}
Therefore, with $M$ large enough, $\e$ small enough, and then $k_0$ large enough, the contribution from the integral over $\Gamma$ is determined by the cutoff $\chi_{\e,0,1/2}$, yielding $k^{-(d-2)/2}$, and thus 
\begin{align}\label{eq:final1}
 |S_ku(x')|\geq C k^{(d-2)/2}\frac{1}{k^{(d-1)/2}|x_1|^{(d-1)/2}},\quad x'\in U,\,k\geq k_0.
 \end{align}
In the step of taking $\e$ sufficiently small, we can also take $\e$ small enough to ensure that $U\subset \Gamma$ for all $k\geq 1$.
Using \eqref{eq:final1}, along with the fact that the measure of $U$ $\sim k^{-(d-2)/2}$, we have that
\beq\label{eq:final2}
\|S_ku\|_{L^2(U)}\geq C k^{-1/2-(d-2)/4}.
\eeq
Since we have ensured that $U\subset \Gamma$, \eqref{eq:final2} and \eqref{eq:normu} imply that the first bound in \eqref{eq:sharp1a} holds.
 It easy to see that if we repeat the argument above but with \eqref{e:PhidForm} instead of \eqref{e:PhiForm}, then we obtain the second bound in \eqref{eq:sharp1a}.
\epf 
 
\
 
\bpf[Proof of Lemma \ref{lem:sharp2}] 
Let $x_0\in \bound$ be a point so that $\bound$ is $C^2$ in a neighborhood of $x_0$ and
let $x'$ be coordinates near $x_0$ so that
$$
\Gamma:=\Big\{(x',\gamma(x')) : |x'|<\delta\Big\}\subset\bound,\qquad \text{ with } \gamma\in C^2,\,\gamma(0)=\partial\gamma(0)=0.
$$ 
Similar to the proof of Lemma \ref{lem:sharp1}, it is sufficient to prove that there exists $u_k\in \LtG$ with $\supp \,u_k\subset\Gamma$, $k_0>0$, and $C>0$ (independent of $k$), such that
 \beq\label{eq:sharp2a}
\|S_ku_k\|_{L^2(\Gamma)}\geq Ck^{-1/2}\|u_k\|_{L^2(\Gamma)} \quad\tand\quad 
\|\partial_{x_1}S_k u\|_{L^2(\Gamma)}\geq Ck^{1/2}\|u\|_{L^2(\Gamma)}
\eeq
for all $k\geq k_0$.

The idea in the curved case is the same as in the flat case: choose $u$ concentrating as close as possible to a glancing point and measure near the point given by the billiard map. More practically, this amounts to ensuring that $|x-y|$ looks like $x_1-y_1$ modulo terms that are much smaller than $k^{-1}$. The fact that $\Gamma$ may be curved will force us to choose $u$ differently and cause our estimates to be worse than in the flat case (leading to the weaker -- but still sharp -- lower bound).
 
With $\chi_{\e,\gamma_1,\gamma_2}$ defined by \eqref{eq:chi},
let 
$u(x', \gamma(x')):=\re^{\ri kx_1}\chi_{\e,1/3,2/3}(x')$ where, as in the proof of Lemma \ref{lem:sharp1}, we have $x'=(x_1, x'')$ and 
as in Lemma \ref{lem:sharp1}, $\supp \, u\subset \Gamma$ for $\e$ sufficiently small and $k$ sufficiently large, and for the rest of the proof we assume that this is the case.
Then
\beq\label{eq:normu2}
\|u\|_{L^2(\Gamma)}\leq C_\e k^{-1/6}k^{-(d-2)/3}.
\eeq 
Define
$$ U:=\Big\{ (x', \gamma(x')):M\e k^{-1/3}\leq x_1\leq 2M\e k^{-1/3}, \,\,|x''|\leq \e k^{-2/3},\quad M\gg 1\Big\}. $$
Then, for $y\in \supp \,u$ and $x\in U$, 
\begin{align}\nonumber
|(x',\gamma(x'))-(y',\gamma(x'))|&=(x_1-y_1)+\O{}\big((|x'|^2+|y'|^2)^2|x_1-y_1|^{-1}\big)+\O{}\big(|x''-y''|^2|x_1-y_1|^{-1}\big)\\
&=x_1-y_1+\O{}\big(k^{-1}M^{3}\e^3\big)+\O{}\big(\e k^{-1}M^{-1}\big)\label{eq:exp1a}\\
&=x_1\bigg(1+\O{}\big(M^{-1}\big)+\O{}\big(k^{-2/3}M^{2}\e^2\big)+\O{}\big(k^{-2/3}M^{-2}\big)\bigg).\label{eq:exp2a}
\end{align}
From \eqref{e:PhiForm} and the definition of $u$, we have for $x'\in U$, 
\beqs
S_ku(x)=C_dk^{d-2}\int_{\Gamma}\re^{\ri k|x-y|+\ri k y_1}\bigg(k^{-(d-1)/2}|x-y|^{-(d-1)/2}+\O{}\left((k|x-y|)^{-(d+1)/2}\right)\bigg)\chi_{\e,1/3,2/3}(y')\rd s(y),\\
\eeqs
and then, using \eqref{eq:exp1a} in the exponent of the integrand and \eqref{eq:exp2a} in the rest, we have, for $x'\in U$,
\begin{align*}
S_ku(x)&=C_dk^{d-2}\frac{\re^{\ri kx_1}}{k^{(d-1)/2}|x_1|^{(d-1)/2}}\\
&\qquad \int_\Gamma \Big(1+\O{}(M^3\e^3)+\O{}(M^{-1}\e)\Big)\Big(1+\O{}(M^{-1})+\O{\e,M}(k^{-2/3})\Big)\chi_{\e,1/3,2/3}(y')\rd s(y).
\end{align*}
Thus, fixing $M$ large enough, then $\e$ small enough, then $k_0$ large enough, we have
\begin{align}\label{eq:final1a}
 |S_ku(x')|&\geq C k^{(d-2)/3}\frac{1}{k^{(d-1)/2}|x_1|^{(d-1)/2}}k^{-1/3},\quad x'\in U,\,k\geq k_0.
 \end{align}
In the step of taking $\e$ sufficiently small, we can also take $\e$ small enough so that when $x'\in U$, $|x'|<\delta$, and thus $x'\in \Gamma$.
 Using the lower bound \eqref{eq:final1a}, and the fact that the measure of $U$ $\sim k^{-1/3}k^{-2(d-2)/3}$, we have that
\beqs
\|S_ku\|_{L^2(\Gamma)}\geq \|S_ku\|_{L^2(U)}\geq C k^{-2/3-1/6-(d-2)/3},
 \eeqs
 and so using \eqref{eq:normu2} we obtain the first bound in \eqref{eq:sharp2a}. Similar to before, if we repeat this argument 
with \eqref{e:PhidForm} instead of \eqref{e:PhiForm}, we find the second bound in \eqref{eq:sharp2a}.
\epf

\paragraph{Acknowledgements.}
JG thanks the US National Science Foundation for support under the Mathematical Sciences Postdoctoral Research Fellowship  DMS-1502661.
EAS thanks the UK Engineering and Physical Sciences Research Council for support under Grant EP/R005591/1.

\def\cprime{$'$} \def\cprime{$'$} \def\cprime{$'$}
\footnotesize{

\def\cprime{$'$} \def\cprime{$'$} \def\cprime{$'$}

%\bibliographystyle{plain}
%\bibliography{biblio_acta}
%}

\end{document}